  \newcommand\Cite[2] {\cite[#1]{#2}}
\def\piclabetsep {0.45pt}
\def\piclinesize {1.32pt}
\def\xxcolor     {green!55!black}
\def\Lcolor      {red!15!black}
\def\locco       {blue}
\newcommand\drawCloseToTwist[6] { 
                 \draw[very thick,color=#6] (#1+#4,#2) -- (#1+#4,#2-#5) ;
                 \drawLeftEval {#1+0.5*#4} {#2+0.5*#4} {0.5*#4} {0} {#6} ;
                 \drawRightCoeval {#1+0.5*#4} {#2-#5-0.5*#4} {0.5*#4} {0} {#6} ;
	}
\newcommand\drawLeftCoeval[5] {
                 \draw[very thick,color=#5,->] (#1-#3,#2+#3) arc(180:277:#3 cm);
                 \draw[very thick,color=#5] (#1,#2) arc(270:360:#3 cm);
                 \draw[very thick,color=#5] (#1-#3,#2+#3) -- (#1-#3,#2+#3+#4);
                 \draw[very thick,color=#5] (#1+#3,#2+#3) -- (#1+#3,#2+#3+#4); }
\newcommand\drawLeftEval[5] {
                 \draw[very thick,color=#5,->] (#1-#3,#2-#3) arc(180:83:#3 cm);
                 \draw[very thick,color=#5] (#1,#2) arc(90:0:#3 cm);
                 \draw[very thick,color=#5] (#1-#3,#2-#3) -- (#1-#3,#2-#3-#4);
                 \draw[very thick,color=#5] (#1+#3,#2-#3) -- (#1+#3,#2-#3-#4); }
\newcommand\drawRightEval[5] {
                 \draw[very thick,color=#5,->] (#1+#3,#2-#3) arc(0:97:#3 cm);
                 \draw[very thick,color=#5] (#1,#2) arc(90:180:#3 cm);
                 \draw[very thick,color=#5] (#1-#3,#2-#3) -- (#1-#3,#2-#3-#4);
                 \draw[very thick,color=#5] (#1+#3,#2-#3) -- (#1+#3,#2-#3-#4); }
\newcommand\drawRightCoeval[5] {
                 \draw[very thick,color=#5,->] (#1+#3,#2+#3) arc(360:263:#3 cm);
                 \draw[very thick,color=#5] (#1,#2) arc(270:180:#3 cm);
                 \draw[very thick,color=#5] (#1-#3,#2-#3) -- (#1-#3,#2-#3-#4);
                 \draw[very thick,color=#5] (#1+#3,#2-#3) -- (#1+#3,#2-#3-#4); }
  \newcommand\drawDinat[5] {\filldraw[fill=green!40, draw=\Lcolor, thick]
                           (#1-#3,#2) -- (#1+#3,#2) -- (#1,#2-#4) -- cycle;
                           \node [red,right=-3pt] at  (#1+#3,#2-#4/2)  {$ #5 $}; }
  \newcommand\drawDinatTleft[5] {\filldraw[fill=green!40, draw=\Lcolor, thick]
                           (#1-#3,#2) -- (#1+#3,#2) -- (#1,#2-#4) -- cycle;
                           \node [red,left=-3pt] at  (#1+#3,#2-#4/2)  {$ #5 $}; }
  \newcommand\drawDinatWLegs[6] {\drawDinat{#1}{#2}{#3}{#4}{#5}
	           \draw[very thick,color=\xxcolor] (#1-#3/2,#2-#4/2) -- (#1-#3/2,#2-#4-#6);
		   \draw[very thick,color=\xxcolor] (#1+#3/2,#2-#4/2) -- (#1+#3/2,#2-#4-#6);}
  \newcommand\drawDinatStdWLegs[4] {\drawDinatWLegs {#1}{#2} {.2} {.18} {#3} {#4}}
  \newcommand\drawDinatTleftWLegs[6] {\drawDinatTleft{#1}{#2}{#3}{#4}{#5}
	           \draw[very thick,color=\xxcolor] (#1-#3/2,#2-#4/2) -- (#1-#3/2,#2-#4-#6);
		   \draw[very thick,color=\xxcolor] (#1+#3/2,#2-#4/2) -- (#1+#3/2,#2-#4-#6);}
  \newcommand\drawDinatStdTleftWLegs[4] {\drawDinatTleftWLegs {#1}{#2} {.2} {.18} {#3} {#4}}
  \newcommand\drawDinatWLegsLocco[6] {\drawDinat{#1}{#2}{#3}{#4}{#5}
	           \draw[very thick,color=\locco] (#1-#3/2,#2-#4/2) -- (#1-#3/2,#2-#4-#6);
		   \draw[very thick,color=\locco] (#1+#3/2,#2-#4/2) -- (#1+#3/2,#2-#4-#6);}
  \newcommand\drawomegaF[2] {\node[fill=blue!15,draw=\Lcolor,thick,double]
                     at (#1,#2) {\hspace*{14pt}$\kappa^{\phantom-}_{\phantom:}$\hspace*{7pt}}; }
  \newcommand\drawomegaFm[2] {\node[fill=blue!30,draw=\Lcolor,thick,double]
                     at (#1,#2) {\hspace*{12pt}$\omegaFm_{\phantom:}$\hspace*{9pt}}; }
  \newcommand\drawomegaL[2] {\node[fill=red!15,draw=\Lcolor,thick,double,rounded corners]
                     at (#1,#2) {\hspace*{14pt}$\omega^{\phantom-}_{\phantom:}$\hspace*{7pt}}; }
  \newcommand\drawomegaLm[2] {\node[fill=red!30,draw=\Lcolor,thick,double,rounded corners]
                     at (#1,#2) {\hspace*{13pt}$\omegaLm_{\phantom:}$\hspace*{8pt}}; }
\def\A             {{\ensuremath{\mathcal A}}}
\def\apo           {{\mathrm s}}
\def\apoL          {\apo}
\def\be            {\begin{equation}}
\def\bearl         {\begin{array}{l}}
\def\bearll        {\begin{array}{ll}}
\def\BlA           {\mathrm{Bl}^{}_\text{ann}}
\def\Blin          {\mathrm{Bl}^\text{in}_\text{b.s.}}
\def\Blinout       {\mathrm{Bl}^\text{in/out}_\text{b.s.}}
\def\Blout         {\mathrm{Bl}^\text{out}_\text{b.s.}}
\def\boti          {\,{\boxtimes}\,}
\def\C             {{\ensuremath{\mathcal C}}}
\def\Cb            {{\ensuremath{\overline{\mathcal C}}}}
\def\CbC           {{\ensuremath{\overline{\mathcal C}\boti\mathcal C}}}
\def\cb            {\beta}
\def\cB            {\gamma}
\def\cc            {(\cb^{\C})}
\def\ccc           {(\cb^{\Cb\boxtimes\C})}
\def\Ccc           {\cb^{\Cb\boxtimes\C}}
\def\chii          {{\raisebox{.15em}{$\chi$}}}
\def\chiv          {\chii^{\V}}
\def\cir           {\,{\circ}\,}
\def\cochi         {{\raisebox{.15em}{$\widehat\chi$}}}
\def\coev          {{\mathrm{coev}\!}}
\def\complex       {{\ensuremath{\mathbbm C}}}
\def\cvp           {\,{*}\,}
\def\cvpF          {\,{*_{\!F}}\,}
\def\cvpL          {\,{*_{\!L}}\,}
\def\cz            {\cb^{\Z}}
\def\cza           {(\cb^{\Z(\A)})}
\def\D             {{\ensuremath{\mathcal D}}}
\def\DeltaF        {\Delta_F}
\def\DeltaL        {\Delta}
\def\dim           {\mathrm{dim}}
\def\ee            {\end{equation}}
\def\eear          {\end{array}}
\def\eeta          {\mathrm e}
\def\EndC          {{\ensuremath{\mathrm{End}_\C}}}
\def\EndId         {\mathcal E\!nd_\koc(\Id_\C)} 
\def\EndID         {\mathcal E\!nd(\Id_\C)}
\def\EndIDA        {\mathcal E\!nd(\Id_{A\Mod})}
\def\EndZ          {{\ensuremath{\mathrm{End}_{\Z(\C)}}}}
\def\eps           {\varepsilon}
\def\epsF          {\eps_{\!F}}
\def\epsL          {\eps}
\def\erf           {\eqref }
\def\eq            {\,{=}\,}
\def\etaF          {\eta}
\def\etaL          {\eta}
\def\ev            {{\mathrm{ev}\!}}
\def\Fc            {{\mathring F}}
\def\findim        {fini\-te-di\-men\-si\-o\-nal}
\newcommand\fopi[1]{\footnote{TMP: See picture #1 in the handwritten notes {\tt jf\_170911\_fgsSpics.pdf}.}}
\newcommand\Fopi[2]{\footnote{TMP: See picture #1#2 in the handwritten notes {\tt jf\_170911\_fgsSpics.pdf}.}}
\renewcommand\fopi[1]{}
\renewcommand\Fopi[2]{}
\newcommand\Fopj[2]{}
\def\GC            {{\ensuremath{\Xi_\C}}}  
\def\Hom           {{\ensuremath{\mathrm{Hom}}}}
\def\HomC          {{\ensuremath{\mathrm{Hom}_\C}}}
\def\HomZ          {{\ensuremath{\mathrm{Hom}_{\Z(\C)}}}}
\newcommand\hsp[1] {\mbox{\hspace{#1 em}}}
\def\id            {\mbox{\sl id}}
\def\Id            {\mbox{\sl Id}}
\def\ii            {{\rm i}}
\def\iF            {\iZe} 
\def\iL            {\iZe}
\def\iN            {\,{\in}\,}
\def\Itemize       {\def\leftmargini{1.04em}~\\[-1.66em]\begin{itemize}\addtolength\itemsep{-7pt}}
\newcommand\iZ[1]  {\imath^{Z{\sss(}#1{\sss)}}}
\def\iZe           {\iZ\one}
\newcommand\iZZ[1] {\imath^{Z\circ Z{\sss(}#1{\sss)}}}
\def\jL            {\jZe}
\def\jZe           {\jmath^{Z{\sss(}\one{\sss)}}}
\def\ko            {\Bbbk} 
\def\koc           {\complex} 
\def\la            {{\rm l.a.}}
\def\lambdaL       {\lambda}
\def\LambdaL       {\Lambda}
\def\M             {{\ensuremath{\mathcal M}}}
\def\mm            {\mathrm m}
\def\Mod           {\text{-mod}}
\def\mul           {\mu}
\def\mulF          {\mul}
\def\mulL          {\mul}
\newcommand\nxl[1] {\\[#1mm]}
\newcommand\Nxl[1] {\\[-1.3em]\\[#1mm]}
\def\Ol            {}
\def\Om            {\Omega}
\def\Omm           {\Om^{-1}}
\def\OmT           {\widetilde{\Om}}
\def\omegaF        {\kappa^{}} 
\def\omegaFm       {\kappa^-} 
\def\omegaL        {\omega^{}}
\def\omegaLm       {\omega^-} 
\def\one           {{\bf1}}
\def\oti           {\,{\otimes}\,}
\def\oticc         {\,{\otimes_{\Cb\otimes\C}}\,}
\def\oticx         {\,{\otimes_\ko}\,}
\def\otiz          {\,{\otimes_{\Z(\C)}}\,}
\def\qquand        {\qquad{\rm and}\qquad}
\def\ra            {{\rm r.a.}}
\def\rrangle       {\rangle\!\rangle}
\def\rep           {representation}
\def\scs           {\scriptstyle}
\def\scs           {\scriptstyle}
\def\soo           {S^{\displaystyle\circ\!\!\circ}}
\def\sooi          {\widetilde S^{\displaystyle\circ\!\!\circ}}
\def\sse           {\scriptsize }
\def\ssg           {\scriptstyle}
\def\sss           {\scriptscriptstyle}
\def\T             {{\ensuremath{\mathbb T}}}
\def\tcoev         {\widetilde{\mathrm{coev}\!}}
\def\tild          {\hat }
\def\tev           {\widetilde{\mathrm{ev}\!}}
\def\To            {\,{\to}\,}
\def\TO            {\,{\Rightarrow}\,}
\def\tphi          {\varphi}
\def\tpsi          {\psi}
\def\vphi          {\tild\varphi}
\def\vpsi          {\tild\psi}
\def\V             {\ensuremath{\mathfrak V}}
 \newcommand\void[1]{}
\def\W             {{\ensuremath{\mathcal W}}}
\newcommand\xarr[1]{\xrightarrow{~#1\,}}
\def\xcong         {\,{\xrightarrow{~\cong\,}}\,}
\def\Z             {{\ensuremath{\mathcal Z}}}
\def\ZA            {{\ensuremath{\mathcal Z(\mathcal A)}}}
\def\Ze            {Z(\one)}
\def\ZC            {{\ensuremath{\mathcal Z(\mathcal C)}}}
\def\zet           {{\ensuremath{\mathbb Z}}}
\def\ZZ            {Z}
\newtheorem{thm}{Theorem}
\newtheorem{lem}[thm]{Lemma}
\newtheorem{pstl}[thm]{Postulate}
\theoremstyle{definition}
\newtheorem{Example}[thm]{Example}
\newtheorem{Definition}[thm]{Definition}
\newtheorem{rem}[thm]{Remark}
\begin{document}

 \numberwithin{equation}{section}
 \numberwithin{thm}{section}

\begin{flushright}
   {\sf ZMP-HH/17-29}\\ 
   {\sf Hamburger$\;$Beitr\"age$\;$zur$\;$Mathematik$\;$Nr.$\;$709} \\[2mm] ~ 
\end{flushright}
\vskip 3.5em
                                   
\begin{center}
\begin{tabular}c \Large\bf The logarithmic Cardy case: Boundary states and annuli
\end{tabular}

\vskip 2.6em
                                     
  J\"urgen Fuchs\,$^{\,a}$,
  ~~Terry Gannon\,$^{\,b}$, 
  ~~Gregor Schaumann\,$^{\,c}$ 
  ~~and~~~ Christoph Schweigert\,$^{\,d}$

\vskip 9mm

  \it$^a$
  Teoretisk fysik, \ Karlstads Universitet\\
  Universitetsgatan 21, \ S\,--\,651\,88\, Karlstad, Sweden \\[7pt]
  \it$^b$
  Department of Mathematical and Statistical Sciences, University of Alberta, \\
  Edmonton, Alberta T6G 2G1, Canada \\[7pt]
  \it$^c$
  Fakult\"at f\"ur Mathematik, Universit\"at Wien, Austria\\[7pt]
  \it$^d$
  Fachbereich Mathematik, \ Universit\"at Hamburg\\
  Bereich Algebra und Zahlentheorie\\
  Bundesstra\ss e 55, \ D\,--\,20\,146\, Hamburg

\end{center}
                     
\vskip 1.3em

\noindent{\sc Abstract}\\[3pt]
We present a model-independent study of boundary states in the Cardy case that
covers all conformal field theories for which the representation category
of the chiral algebra is a -- not necessarily semisimple -- modular tensor
category. This class, which we call finite CFTs, includes all rational theories,
but goes much beyond these, and in particular comprises many logarithmic conformal
field theories.
\\
We show that the following two postulates for a Cardy case are compatible beyond 
rational CFT and lead to a universal description of boundary states that realizes
a standard mathematical setup: First, for bulk fields, the pairing of left and
right movers is given by (a coend involving) charge conjugation; and second, the
boundary conditions are given by the objects of the category of chiral data. 
For rational theories our proposal reproduces the familiar result for the 
boundary states of the Cardy case. Further, with the help of sewing we 
compute annulus amplitudes. Our results show in particular that these possess an
interpretation as partition functions, a constraint that for generic finite CFTs
is much more restrictive than for rational ones.

\newpage
\tableofcontents
\newpage

\section{Introduction}\label{sec:intro}

Two-dimensional conformal field theory, or CFT for short, is of fundamental importance 
in many areas, including the theory of two-dimensional critical systems in statistical 
mechanics, string theory, and quasi one-dimensional condensed matter systems. For
understanding issues like percolation probabilities, open string perturbation theory 
in D-brane backgrounds, or defects in condensed matter physics, one must
study CFT on surfaces with boundary. Of particular interest in applications is one of
the simplest surfaces of this type, namely a disk with one bulk field insertion. From 
a more theoretical perspective, these correlators offer the most direct way
to gain insight into boundary conditions, and have been frequently used to this end.

Basic symmetries of a conformal field theory are encoded in a chiral symmetry algebra,
which can be realized as a vertex operator algebra. Here we consider the situation that
the representation category \C\ of the chiral algebra has the structure of a ribbon 
category; this structure encodes in particular information about conformal weights and 
about the braiding and fusing matrices in a basis independent form. We will be interested 
in theories for which the category \C\ exhibits suitable finiteness properties and has
dualities and a non-degenerate braiding (for details see Definition \ref{def:modular}).
We refer to ribbon categories with the relevant properties as \emph{modular tensor
categories}. Modular categories in this sense are \emph{not} required to
be semisimple, and indeed there are many interesting systems, such as critical 
dense polymers \cite{dupl2}, for which \C\ is non-semisimple. For brevity, we 
will refer to conformal field theories whose chiral data are described by such a 
category as \emph{finite conformal field theories}. The class of finite CFTs 
includes, besides all rational CFTs, in particular all rigid finite logarithmic
CFTs. In terms of vertex operator algebras, the relevant notion of finiteness is,
basically, $C_2$-\emph{cofiniteness} \cite{miya8,huan28}; see \cite{crga3}
for precise statements and examples.

\medskip

In the present paper we are concerned with specific correlators for finite CFTs: with 
boundary states and with annulus partition functions. Boundary states and boundary
conditions are a feature of \emph{full} local conformal field theory, in which left-
and right-movers are adequately combined. In the special case of \emph{rational} CFTs,
for which \C\ is a semisimple modular tensor category, the structure of a full 
conformal field theory is fully understood \cite{fuRs4,fjfrs2} and can be implemented
in the framework of vertex operator algebras \cite{huKo3}. This includes in particular
the proper description as well as classification of boundary conditions. The simplest
possibility -- known as the Cardy case -- is that the boundary conditions are just
the objects of the tensor category \C, while in the general case they are the objects 
of a module category over \C.

Beyond semisimplicity, much less is 
known, but there has been substantial recent progress. Specifically, structural
properties of the space of bulk fields and their role for fulfilling the modular
invariance and sewing constraints have been understood \cite{fuSc22}, and systematic
model-independent results for correlators of finite CFTs on closed world sheets
have been obtained \cite{fuSs4,fuSs5,fuSc22}. 
In contrast, no model-independent results are available for correlators of 
non-semisimple finite CFTs on world sheets with boundary. 

\medskip

The present paper takes the first steps towards filling this gap. Concerning boundary
conditions and boundary states, our starting point consists of the following two
statements which can be expected to be valid under very general circumstances,
even beyond the realm of finite CFTs:
\\[3pt]
{\bf(BC)}~\,
First, the \emph{boundary conditions} for a given local conformal field theory should
be the objects of some category \M. This category may be realized in various guises, 
e.g.\ as the (homotopy) category of matrix factorizations in a Landau-Ginzburg formulation,
or as a category of modules over a Frobenius algebra in the TFT approach \cite{fuRs4}
to rational CFT. For a finite CFT based on a modular tensor category \C, the
category \C\ itself is a natural candidate for the category of boundary conditions.
If this is a valid choice and thus determines a consistent local CFT, then it is appropriate 
to refer to that full local CFT, following the parlance for rational theories, as the 
\emph{Cardy case}.
\\[3pt]
{\bf(BS)}~\,
Second, an essential feature of a \emph{boundary state} is that it associates to a given
boundary condition an element of some vector space. In a Landau-Ginzburg formulation, 
this space is a center (or its derived version, a Hochschild complex). 
In a more abstract approach to conformal field theory, the appropriate notion is the
center of the category \C, i.e.\ the space $\EndID$ of natural endo-trans\-formations
of the identity functor of \C. (This generalizes 
the fact that for the category $A$\Mod\ of modules over an associative algebra $A$, 
$\EndIDA$ can be identified with the center of $A$ as an algebra.) With the help of
standard categorical manipulations this vector space can be expressed as
  \be
  \EndID \,= \int_{c\in \C} \HomC(c,c)
  \,\cong \int_{c\in \C} \HomC(c^\vee {\otimes}\, c,\one) \,\cong\, \HomC(L,\one) 
  \label{eq:EndId=Hom}
  \ee
with $\one$ the tensor unit of \C\ and with the object $L$ of \C\ given by
$L \eq \int^{c\in\C}c^\vee {\otimes}\, c$. (The \emph{end} $\int_c$ and \emph{coend}
$\int^c$ appearing here are categorical limit and colimit constructions, respectively;
for the functors in question they exist in any finite tensor category.)

\medskip

Now the map from boundary conditions to the vector space $\HomC(L,\one)$ is a 
decategorification. It is thus natural to expect that it factorizes over the Grothendieck 
ring $K_0(\C)$, which is the decategorification of the category \C. Such a factorization
over the Grothendieck ring is generally afforded by \emph{characters}. We should therefore
expect that boundary states are characters for representations of some suitable algebraic
structure; as we will see, the latter
is precisely the object $L$, endowed with a natural Hopf algebra structure. Let us note
that a similar description is known from two-dimensional topological field theories, as
studied in \Cite{Sect.\,7}{caWil}. In that case the role of the center is played by the
zeroth Hochschild homology of smooth projective schemes (or of more general spaces), 
and the homomorphism from the
Grothendieck ring to the center is given by the Chern character \Cite{Prop.\,13}{caWil}.

\subsection{Boundary states in rational CFT}\label{sec:1.1}

As we will now explain, the paradigm outlined above is indeed realized in the semisimple
case. In that case, boundary states can be regarded as the characters of specific $L$-modules
which are given by the objects of \C\ together with a canonical $L$-action on them.
We will now give a detailed account of this interpretation of the structure of boundary
states of a rational CFT with semisimple modular tensor category \C\ in the Cardy case.
In the Cardy case of a rational CFT, one first selects a finite set $(x_i)_{i\in I}$ of 
representatives for the isomorphism classes of simple objects of \C. 
Boundary states are then conventionally written as linear combinations of so-called
\emph{Ishibashi states}; for each $i\iN I$ there is one Ishibashi state $|i\rrangle$.
An Ishibashi state is in fact nothing but a canonical vector spanning a space of 
two-point conformal blocks on the sphere, namely the one with the two chiral
insertions given by $x_i$ and $x_{\overline i}$.
Boundary conditions are thus labeled by objects $x$ of \C, and elementary boundary 
conditions by isomorphism classes of
simple objects $x_a$ of \C\ with $a \iN I$. The boundary state $|x_a\rangle$ 
associated with the elementary boundary condition $x_a$ is expanded in Ishibashi states as
  \be
  |x_a\rangle = \sum_{i\in I} \frac{S_{ia}}{\sqrt{S_{i0}}} \, |i\rrangle \,,
  \label{|x_a>}
  \ee
where $S_{ij}$ are the entries of the modular S-matrix --
the non-degenerate matrix which represents the transformation $\tau\,{\mapsto}\,{-}1/\tau$
on the (vertex algebra) characters of the theory --
and $0 \iN I$ is the label for the identity field, i.e.\ for the tensor unit $\one$ of \C.

The formula \eqref{|x_a>} can be conveniently understood via the relation \cite{fffs3}
to three-dimen\-si\-o\-nal topological field theory. Namely, the boundary state
$|x_a\rangle$ can be constructed as the topological invariant that the TFT functor
$\mathrm{tft}$ associates to a certain ribbon link in the three-ball:
     \def\locpa {1.4} 
  \be
  \raisebox{3.3em} {$ |x_a\rangle ~=~ \displaystyle\sum_{i\in I} ~ \mathrm{tft} \Big( ~~~ $}
  \begin{tikzpicture}
  \shadedraw[color=gray] (0,\locpa) circle (\locpa);
  \draw[very thick,color=red] (-0.08*\locpa,1.35*\locpa) arc (95:445:0.85*\locpa cm and 0.35*\locpa cm);
  \draw[very thick,color=blue] (0,0) -- (0,0.58*\locpa);
  \draw[very thick,color=blue] (0,0.74*\locpa) -- (0,2*\locpa);
  \node (a) at (0.48*\locpa,0.57*\locpa) {$\scs x_a $};
  \node (i) at (0.23,0.24) {$\scs x_i $};
  \end{tikzpicture}
  \raisebox{3.3em} {$ ~~~ \Big)\,. $}
  \label{eq:|x_a>}
  \ee
By construction this is a vector in the space of two-point conformal blocks on the sphere;
expanding it in the basis 
     \def\locpa {1.4} 
  \be
  \raisebox{3.3em} {$ |i\rrangle ~=~ \mathrm{tft} \Big( ~~~ $}
  \begin{tikzpicture}
  \shadedraw[color=gray] (0,\locpa) circle (\locpa);
  \draw[very thick,color=blue] (0,0) -- (0,2*\locpa);
  \node (i) at (0.23,0.24) {$\scs x_i $};
  \end{tikzpicture}
  \raisebox{3.3em} {$ ~~~ \Big) $}
  \label{eq:|i>>}
  \ee
of Ishibashi states yields the expression \eqref{|x_a>}.\,%
 \footnote{~The precise normalization depends in fact on the conventions for the
 two-point functions of bulk fields on the sphere; see e.g.\ \Cite{Sect.\,4.3}{fffs3}.}

Now when evaluating the invariant \erf{eq:|x_a>}, the ribbon link appearing in the picture 
is interpreted as a morphism in the category \C. Moreover, with the help of the 
duality structure on \C\ we can bend down the $i$-line in the so obtained morphism
according to
  \be
  \begin{tikzpicture}
  \draw[very thick,color=red] (-0.08*\locpa,1.35*\locpa) arc (95:445:0.85*\locpa cm and 0.35*\locpa cm);
  \draw[very thick,color=blue] (0,0) -- (0,0.58*\locpa);
  \draw[very thick,color=blue] (0,0.74*\locpa) -- (0,1.7*\locpa);
  \node (a) at (0.48*\locpa,0.57*\locpa) {$\scs x_a $};
  \node (i) at (0.24,0.09) {$\scs x_i^{} $};
  \end{tikzpicture}
  \raisebox{3.3em} {\hspace*{2.0em} $ \xmapsto{~~~~} $ \hspace*{1.5em} }
  \begin{tikzpicture}
  \draw[very thick,color=red] (-0.41*\locpa,0.77*\locpa) arc (222:570:0.85*\locpa cm and 0.35*\locpa cm);
  \draw[very thick,color=blue] (0,0) -- (0,0.58*\locpa);
  \draw[very thick,color=blue] (0,0.74*\locpa) .. controls (0,1.35*\locpa)
                               and (-0.8,1.35*\locpa) .. (-0.8,0) ;
  \node (a) at (0.65*\locpa,0.57*\locpa) {$\scs x_a $};
  \node (i) at (0.24,0.09) {$\scs x_i^{} $};
  \node (ii) at (-1.07,0.09) {$\scs x_i^\vee $};
  \end{tikzpicture}
  \label{eq:openHopf2char}
  \ee
Upon summation over $i \iN I$, the morphism on the right hand side of
\erf{eq:openHopf2char} is indeed precisely the character $\chii^L_{x_a}$ of a simple
$L$-module $(x_a,\rho_a)$ with $\rho_a$ a canonical action of $L$ on the object $x_a \iN \C$.

\subsection{Boundary states in finite CFT}

A crucial observation is now that by making use of the coend structure of $L$ the result just
described for rational CFT actually generalizes directly to non-semisimple finite CFTs. A
detailed justification of this statement will be given in Section \ref{sec:chii-cochi}.
Let us point out that the characters of $L$-modules appearing here are not to be confused 
with characters in the sense of vertex operator algebras. However, as will be explicated
in Remark \ref{rem:chiv}, they indeed directly correspond to chiral genus-1 one-point
functions for vertex algebra representations.

Thus the TFT construction of correlators of rational CFTs precisely yields a standard 
mathematical structure that is still present for arbitrary finite conformal field theories
-- a lattice $K_0(\C) \,{\hookrightarrow}\, \EndID \,{\cong}\, \HomC(L,\one)$. 
Moreover, as a generic feature of decategorification, this lattice comes with the additional 
structure of a distinguished basis -- in our case, the characters $\chii^L_{x_a}$ of the
simple $L$-modules $(x_a,\rho_a)$. For non-semisimple \C\ the lattice is not of maximal 
rank, i.e.\ the characters $\chii^L_{x_a}$ do not span the whole space $\HomC(L,\one)$. 

To allow for an interpretation of this result in CFT terms, we need to identify the
vector space $\HomC(L,\one)$ with a space of conformal blocks. The space of conformal
blocks in question is not the one of zero-point blocks on the torus (which is also
isomorphic to $\HomC(L,\one)$ \cite{lyub6}), but the one for a disk with one bulk
field insertion. As will be explained in Section \ref{sec:bdyblocks}, this follows by
combining recent developments \cite{fScS3} concerning conformal blocks for surfaces
with boundary with an appropriate expression for the space of bulk fields in the Cardy
case. More specifically, we need to describe the latter as an object, and in fact even
as a commutative Frobenius algebra, in the category \CbC, i.e.\ in the Deligne product of
\C\ with its reverse \Cb. (\Cb\ is the same category as \C, but with reversed braiding
and twist, which accounts for the opposite chirality of left- and right-movers.)
For rational CFTs, the space of bulk fields of the Cardy case is realized by the object
  \be
  \bigoplus_{i \in I}\, x_i^\vee \boxtimes x_i^{} \,\in \CbC \,,
  \label{eq:bulkalgebra-ssi}
  \ee
which in particular gives rise to the charge-conjugate (sometimes also called diagonal)
torus partition function.

We need to generalize this expression to arbitrary finite CFTs.
We do so by the following further natural hypothesis about the Cardy case:
\\[3pt]
{\bf (F)}~\,
We assume that for any finite CFT the bulk object in the Cardy case is the coend
  \be
  \Fc:= \int^{c\in\C}\! c^\vee \,{\boxtimes}\, c  \,\in \CbC \,.
  \label{eq:Fc}
  \ee

The object $\Fc$ combines left- and right-movers in the same way as in rational CFT:
it pairs each object with its charge-conjugate, modulo dividing out all morphisms
between objects. 
When \C\ is semisimple, this leaves one representative out of each isomorphism class of
simple objects, so that $\Fc$ reduces to the Cardy bulk algebra \erf{eq:bulkalgebra-ssi}.

\medskip

The assumption {\bf (F)} about the bulk object is logically independent from the 
assumptions {\bf (BC)} and {\bf (BS)} about boundary conditions and boundary states 
made above. It is remarkable that
  \begin{enumerate}
  \item
by the proper notion of modularity of braided finite tensor categories, $\Fc \iN \CbC$
gives an object $F$ in the Drinfeld center \ZC\ of \C\ that is a commutative symmetric
Frobenius algebra in \ZC, whereby also $\Fc$ naturally is such an algebra 
(see Section \ref{sec:bulkalgebra});
  \item
with this Frobenius algebra in \ZC, the conformal blocks for the correlator of one bulk 
field on the disk can be shown to be canonically isomorphic to the center $\HomC(L,\one)$
(see Section \ref{sec:bdyblocks}).
  \end{enumerate}

Boundary states must satisfy a number of consistency requirements. Most notably, upon
sewing they must lead to annulus amplitudes that in the open-string channel can be
expanded in terms of characters. In the non-semisimple case this is a 
non-trivial requirement, as it excludes contributions from so-called pseudo-characters.
Moreover, the coefficients in such an expansion must be non-negative integers, as befits a
partition function of open string states or boundary fields. It is then a further remarkable
observation of our paper that the setup laid out above furnishes consistent annulus
partition functions, with coefficients taking values in the positive integer cone.

\bigskip

The format of the paper is as follows: We start in Section \ref{sec:2} by presenting 
pertinent results about (not necessarily semisimple) modular tensor categories and about
algebraic structures internal to them, in particular the Frobenius algebra structure on
the coend bulk object \erf{eq:Fc} and characters and cocharacters of $L$-modules.
Important input needed for this description has become available only recently
\cite{shimi7,shimi9,shimi10,fScS2} and has not been adapted to the CFT setting before. 
Taking \C, as a module category over itself, as the category of boundary conditions,
in Section 3 we then obtain the spaces of conformal blocks for incoming
and outgoing boundary states and present, in Postulates \ref{pstl:in} and \ref{pstl:out}, 
our precise proposal for the boundary states. Afterwards, in Section
4, these boundary states are used to obtain, via sewing, annulus amplitudes. We show that
the open-string channel annulus amplitudes can be expressed as non-negative integral
linear combinations of characters, so that they can be consistently be interpreted as
partition functions. 
As a further consistency check, we show that the annulus amplitudes are compatible with 
the natural proposal that the boundary fields can be described as internal Hom objects for
\C\ as a module category over itself. We therefore conjecture that the boundary operator
products can be expressed through the structure maps of these internal Homs.
The Appendix provides additional information about some of the
mathematical tools that are used in the main text.

As an illustration, for three specific classes of models -- the rational case, the logarithmic
$(p,1)$ triplet models \cite{kaus,gaKa3,fgst} whose boundary states have been studied in
\cite{garu,garu2,gaTi}, and the case that \C\ is the \rep\ category of a finite-dimensional
factorizable ribbon Hopf algebra -- we present further details about the Cardy case bulk 
object (Example \ref{X:1}), the spaces of conformal blocks for boundary states 
(Example \ref{X:2}) and their subspaces spanned by (co)characters (Example \ref{X:3}),
and finally the annulus amplitudes obtained from them by sewing (Example \ref{X:4}).


\section{Structures in modular tensor categories}\label{sec:2}

\subsection{Modular tensor categories beyond semisimplicity}

In this section we present the class of categories relevant to us. We also survey
pertinent structure in these categories, to be used freely in the rest of the paper.
For applications to conformal field theory, the categories in question should be thought
of as (being ribbon equivalent to) representation categories of appropriate
$C_2$-cofinite vertex operator algebras.
All categories considered in this paper will be \complex-linear.\,%
 \footnote{~The statements below remain true if \complex\ is replaced by any 
 algebraically closed field $\ko$. In the CFT application, $\ko \eq \complex$.}

\begin{Definition}[Finite category]
A \complex-linear category \C\ is called \emph{finite} iff
\\[3pt]
{\rm 1.}\, \C\ has finite-dimensional spaces of morphisms;
\\[3pt]
{\rm 2.}\, every object of \C\ has finite length;
\\[3pt]
{\rm 3.}\, \C\ has enough projective objects;
\\[3pt]
{\rm 4.}\, there are finitely many isomorphism classes of simple objects.
\end{Definition}

\begin{rem}
A \complex-linear category is finite if and only if it is equivalent
to the category $A$-mod of \findim\ modules over a \findim\ \complex-algebra $A$.
\end{rem}

\begin{Definition}[Finite tensor category]
A \emph{finite tensor category} is a rigid monoidal finite
\complex-linear category with simple tensor unit.
\end{Definition}

The tensor product $\otimes$ in a finite tensor category is automatically exact in each
argument. As a consequence, the Grothendieck group $K_0(\C)$ of \C\ inherits a ring
structure; it is thus referred to as the Gro\-then\-dieck ring, or \emph{fusion ring}, of
\C. A semisimple finite tensor category is also called a \emph{fusion category}.
Without loss of generality we take the monoidal structure to be \emph{strict}, i.e.\ assume
that the tensor product is strictly associative and that the monoidal unit $\one$ 
obeys $c \oti \one \eq c \eq \one \oti c$ for all objects $c$.

The categories of our interest are not only monoidal, i.e.\ endowed with a tensor product,
and rigid, i.e.\ endowed with left and right dualities, but have further structure:
they are also braided and have a twist, or balancing, satisfying compatibility relations
which correspond to properties of ribbons embedded into three-space. (For more details
see e.g.\ Chapter XIV of \cite{KAss} and Section 2.1 of \cite{fuRs4}).

\begin{Definition}[Ribbon category]
A \emph{ribbon category} (or \emph{tortile} category) is a balanced braided rigid monoidal category.
\end{Definition}

A ribbon category is in particular endowed with a canonical \emph{pivotal} structure,
i.e.\ a choice of monoidal natural isomorphism between the (left or right)
double dual functor and the identity functor or, equivalently, with a \emph{sovereign}
structure, i.e.\ a choice of monoidal natural isomorphism between the left and the right
dual functors, The pivotal structure can be expressed through the twist together with the
dualities and braiding or, conversely, the twist through the pivotal structure together with the
dualities and braiding. A ribbon category is also \emph{spherical}, i.e.\ the left and right 
trace of any endomorphism are equal.

We denote the right and left dual of an object $c$ of a rigid category by $c^\vee$ and 
${}^{\vee\!}c$, respectively, and the corresponding evaluation and coevaluation morphisms 
by $\ev_c\colon c^\vee \oti c \To \one$ and $\coev_c \colon \one \To c \oti c^\vee$, and by
$\tev_c\colon c\oti {}^{\vee\!}c \To \one$ and $\tcoev_c\colon \one \To {}^{\vee\!}c \oti c$,
respectively. For the braiding between objects $c$ and $d$ of a braided category we write
$\cb_{c,d}\colon c\oti d \xcong d\oti c$, and for the twist on an object $c$ of a ribbon
category we write $\theta_c\colon c \xcong c$. Henceforth we will,  
for the sake of brevity, often tacitly identify each object of a ribbon category with its 
double dual, i.e.\ suppress the pivotal structure, since it can be restored unambiguously.

\medskip

A \emph{coend} is a specific colimit that, morally, amounts to summing over all objects
of a category while at the same time dividing out all relations among them that
are implied by morphisms in the category. It vastly 
generalizes the direct sum $\bigoplus_{i\in I}$, as appearing e.g.\ in the expression
\eqref{eq:bulkalgebra-ssi} for the bulk state space in the semisimple case, to which it
reduces if the category is finitely semisimple (for more details see e.g.\ \cite{fuSc23}).
Coends are defined through a universal property; thus if a coend exists, it is unique up to
unique isomorphism.
In any finite tensor category \C\ and for any object $c\iN\C$ the exactness of the
tensor product and of the duality functor guarantees that the specific coend
  \be
  Z(c) := \int^{x\in\C}\! x^\vee \oti c \oti x
  \label{eq:Z(c)}
  \ee
exists as an object in \C, see Theorem 3.4\,%
   \footnote{~In the preprint version of \cite{shimi7}, this is Theorem 3.6.}
of \cite{shimi7}. If \C\ is semisimple, then $Z(c)$
is a finite direct sum $\bigoplus_{i\in I} x_i^\vee \oti c \oti x_i^{}$.
The prescription \erf{eq:Z(c)} defines an endofunctor $Z$ of \C\ that admits an algebra
structure, i.e.\ is a \emph{monad}, even a Hopf monad, on \C\ (for details see Appendix 
\ref{app:centralmonad}). Of particular interest is the object
  \be
  \Ze = \int^{x\in \C}\! x^\vee\otimes x ~\in \C \,,
  \label{eq:efL}
  \ee
which has a natural structure of algebra in \C.
To appreciate these statements, note that the notions of an algebra and coalgebra, as
well as Frobenius algebra, can be defined in any monoidal category \C\ (e.g., in any
category of endofunctors) in full analogy with the category of vector spaces. When
\C\ is in addition braided, the same holds for the notion of a Hopf algebra.
  
If the finite category \C\ is braided, then the object $\Ze$ in fact has a canonical
structure of a Hopf algebra in \C, and the Hopf monad $Z$ can be obtained by tensoring
with $\Ze$. When regarding $\Ze$ as a Hopf algebra we write 
  \be
  \Ze =: L \,.
  \ee
We denote the multiplication, unit, comultiplication, counit and antipode of $L$ by
$\mulL \,{\equiv}\, \mul_L$, $\etaL \,{\equiv}\, \eta_L$, $\DeltaL \,{\equiv}\, \Delta_L$, 
$\epsL \,{\equiv}\, \eps_L$ and $\apoL \,{\equiv}\, \apo_L$, respectively.
The Hopf algebra $L$ also comes with a natural pairing
  \be
  \omegaL :\quad L\otimes L \to \one \,,
  \label{eq:hopa}
  \ee
which has the structure of a \emph{Hopf pairing}, i.e.\ satisfies the compatibility relations
  \be
  \bearll
  \omegaL \circ (\mulL \oti \id_L) = \omegaL \circ \big[ \id_L \otimes
  \big( (\omegaL \oti \id_L) \cir (\id_L \oti \DeltaL) \big) \big] \,, ~~
  & \omegaL \circ (\etaL \oti \id_L) = \epsL \,,
  \Nxl3
  \omegaL \circ (\id_L \oti \mulL) = \omegaL \circ \big[
  \big( (\id_L \oti \omegaL ) \cir (\DeltaL \oti \id_L) \big) \otimes \id_L \big] \,,
  & \omegaL \circ (\id_L \oti \etaL) = \epsL 
  \eear
  \ee
with the structural morphisms of $L$ as a bialgebra.

As a coend, $L \eq \Ze$ comes with a morphism $\iL_x \colon x^\vee \oti x \To L$ for each
$x\iN\C$, forming a \emph{dinatural} family.
This implies that a morphism $f\colon L \To c$ is uniquely determined by the dinatural family
$f\cir \iL_x \colon x^\vee \oti x \To c$ of morphisms, and likewise for morphisms with source
object $L\oti L$ 
etc. For instance, the following formula determines the Hopf pairing $\omegaL$ uniquely:
  \be
  \omegaL \circ ( \iL_x \oti \iL_y ) = (\ev_x \oti \ev_y) \circ
  \big( \id_{x^\vee_{}} \oti ( \cb_{y^\vee_{},x} \cir \cb_{x,y^\vee_{}}) \oti \id_y \big) \,.
  \ee
Analogously, the defining formulas for the structural morphisms of the Hopf algebra $L$ read
  \be
  \bearll
  \mulL \circ (\iL_x \oti \iL_y) := \iL_{y\otimes x}
  \circ (\id_{x^\vee_{}} \oti \cb_{x,y^\vee_{}\otimes y}) \,,
  & \etaL := \iL_\one \,,
  \Nxl3
  \DeltaL \circ \iL_x
  := (\iL_x\oti \iL_x) \circ (\id_{x^\vee_{}} \oti \coev_x \oti \id_x) \,, \qquad 
  & \epsL \circ \iL_x :=€\ev_x ,
  \Nxl3 \multicolumn2l {
  \apoL \circ \iL_x := (\ev_x \oti \iL_{x^\vee_{}})
  \circ (\id_{x^\vee_{}} \oti \cb_{x^{\vee\!\vee}_{}\!,x} \oti \id_{x^\vee_{}})
  \circ (\coev_{x^\vee_{}} \oti \cb_{x^\vee_{}\!,x}) \,. }
  \eear
  \label{eq:L-morphisms}
  \ee
Here in the formula for $\mulL$ the trivial identifications of
$\id_{x^\vee_{}} \oti \id_{y^\vee_{}}$ with $\id_{(y\otimes x)^\vee_{}}$
and of $\id_{y} \oti \id_{x}$ with $\id_{y\otimes x}$ are implicit.
A graphical interpretation of the expressions for the coproduct $\DeltaL$ and the counit
$\epsL$ looks as follows:
  \def\locpa  {0.45}  
  \def\locpb  {0.35}  
  \def\locpc  {0.7}   
  \def\locpd  {1.0}   
  \def\locph  {0.1}   
  \def\locpk  {0.9}   
  \def\locpl  {0.7}   
  \def\locpL  {0.4}   
  \def\locpm  {0.6}   
  \def\locpM  {1.2}   
  \def\locpw  {0.4}   
  \def\locpz  {0.5}   
  \be
  \begin{tikzpicture}
  \drawDinatStdWLegs{0}{0} {\scs \iL_x} {\locpl}
  \draw[very thick,color=\Lcolor] (0,0) -- (0,\locpc);
  \draw[very thick,color=\Lcolor] (-\locpw,\locpc+\locpw) arc (180:360:\locpw cm);
  \filldraw[color=blue!60!black] (0,\locpc) circle (0.09) node[above]{$\scs \DeltaL$};
  \draw[very thick,color=\Lcolor] (-\locpw,\locpc+\locpw) -- +(0,\locpm) node[above]{$L$};
  \draw[very thick,color=\Lcolor] (\locpw,\locpc+\locpw) -- +(0,\locpm) node[above]{$L$};
  \node at (1.5,0.5) {$=$};
  \begin{scope}[shift={(3.5,\locpz)}]
  \drawDinatStdTleftWLegs{-\locpa}{0} {\scs \iL_x~~~} {\locpL}
  \draw[very thick,color=\Lcolor] (-\locpa,0) -- +(0,\locpM) node[above]{$L$};
  \drawDinatStdWLegs{\locpa}{0} {\scs \iL_x} {\locpL}
  \draw[very thick,color=\Lcolor] (\locpa,0) -- +(0,\locpM) node[above]{$L$};
  \draw[very thick,color=\xxcolor] (\locph-\locpa,-\locpL-0.18) arc (180:261:\locpb cm);
  \draw[very thick,color=\xxcolor,->] (\locpa-\locph,-\locpL-0.18) arc (360:261:\locpb cm);
  \draw[very thick,color=\xxcolor] (-\locpa-\locph,-\locpL) -- +(0,-\locpd);
  \draw[very thick,color=\xxcolor] (\locpa+\locph,-\locpL) -- +(0,-\locpd);
  \end{scope}
  \begin{scope}[shift={(7.6,0.6)}]
  \drawDinatStdTleftWLegs{0}{0} {\scs \iL_x~~~} {\locpk}
  \draw[very thick,color=\Lcolor] (0,0) -- (0,\locpc);
  \fill[color=blue!50,draw=black,thick] (0,\locpc) circle (0.09) node[right]{$\scs \epsL$};
  \end{scope}
  \node at (8.6,0.5) {$=$};
  \begin{scope}[shift={(9.3,0.5)}]
  \draw[very thick,color=\xxcolor] (0,0) arc (180:102:\locpb cm);
  \draw[very thick,color=\xxcolor,->] (2*\locpb,0) arc (0:102:\locpb cm);
  \draw[very thick,color=\xxcolor] (0,0) -- +(0,-\locpk);
  \draw[very thick,color=\xxcolor] (2*\locpb,0) -- +(0,-\locpk) node[below]{$\scs x$};
  \end{scope}
  \end{tikzpicture}
  \ee
The corresponding description of the product $\mulL$ will be provided in formula \erf{eq:mF} 
below. (The picture for the antipode will not be needed; it can e.g.\ be found in 
\Cite{Eq.\,(4.19)}{fuSc17}.)

\medskip

To any monoidal category \A\ there is canonically associated a braided monoidal category
$\ZA$, called the \emph{monoidal center}, or \emph{Drinfeld center}, of \A. The objects of 
$\ZA$ are pairs $(a,\cB)$ consisting of an object $a\iN\A$ and a natural family 
$\cB \eq (\cB_b)_{b\in\A}$ of isomorphisms $\cB_b\colon a \oti b \xcong b \oti a$
satisfying one half of the properties of a braiding and accordingly called a half-braiding.\,%
 \footnote{~We follow the convention used e.g.\ in \cite{dmno}; in \cite{EGno}
 the half-braiding is defined in the opposite manner.}
The associativity constraint of $\ZA$ is the same as the one of $\C$ (and thus in the 
present context, by strictness, taken to be trivial), while the braiding of $\ZC$ is 
(see e.g.\ \Cite{Prop.\,8.5.1}{EGno})
  \be
  \cza_{(a,\cB),(a',\cB')} = \cB_{a'} \,.
  \ee
Forgetting the half-braiding furnishes an exact monoidal functor 
  \be
  U:\quad \Z(\A)\to \A \,. 
  \label{eq:Uforget}
  \ee

If \C\ has a (right, say) duality, then so has its Drinfeld center \ZC, with the same 
evaluation and coevaluation morphisms $\ev_a$ and $\coev_a$ as in \C\ and with dual objects
  \be
  (a,\cB)^\vee = (a^\vee,\cB^-)§ \,,
  \ee
where
  \be
  (\cB^-)_b^{} := (\ev_a\oti \id_b \oti \id_{a^\vee})
  \circ (\id_{a^\vee} \oti \cB_b^{-1} \oti \id_{a^\vee})
  \circ  (\id_{a^\vee} \oti \id_b \oti \coev_a)
  \ee
is the partial dualization of the inverse half-braiding of $a$. In particular, if, as in the
case of our interest, \C\ is ribbon, then \ZC\ naturally comes with a ribbon structure.\,%
 \footnote{~Recall that we suppress the pivotal structure of \C. Likewise we suppress the
 pivotal structure for \ZC. This is consistent because \Cite{Exc.\,7.13.6}{EGno} 
 a pivotal structure of \C\ induces one for \ZC.}
Henceforth we usually reserve the symbol \C\ for braided categories; given a braided category \C,
we write $\cb$ for the braiding in \C\ and $\cz$ for the braiding in its center \ZC.

\medskip

We denote by \Cb\ the \emph{reverse} of a finite ribbon category \C, i.e.\ the same monoidal
category, but with inverse braiding and twist.  For any finite ribbon category \C\ there
is a canonical braided functor
  \be
  \GC: \quad \Cb \boti \C \to \Z(\C)
  \label{eq:CbC->ZC}
  \ee
from the \emph{enveloping category} of \C, i.e.\ the Deligne product of \Cb\ with \C,
to the Drinfeld center of \C.
As a functor, \GC\ maps the object $\Ol u \boti v \iN \CbC$ to the tensor product 
$\Ol u \oti v \iN \C$ endowed with the half-braiding $\cB_{\Ol u\otimes v}$ that has components
  \be
  \cB_{\Ol u\otimes v;c} = ( \cb_{c,\Ol u}^{-1} \oti \id_v ) \circ ( \id_{\Ol u} \oti \cb_{v,c} ) 
  \label{eq:cB}
  \ee
for $c \iN \C$. We will freely use the graphical calculus for morphisms in 
the braided monoidal category \C. We then have the following graphical description of the 
half-braiding \erf{eq:cB}:
  \be
  \begin{tikzpicture}
  \braid[ line width=\piclinesize, height=0.8cm,
     style strands={1}{red},
     style strands={2,3}{\locco}
  ] (braid) at (0,0) s_1 s_2^{-1};
  \node[at=(braid-rev-1-e),below=\piclabetsep] {$u$};  
  \node[at=(braid-rev-2-e),below=\piclabetsep] {$v$}; 
  \node[at=(braid-rev-3-e),below=\piclabetsep] {$c$}; 
  \node (b1)  at (0.45,-0.95) {$\ssg \cb^{-1} $};
  \node (b2)  at (1.46,-1.73) {$\ssg \cb $};
  \node (lhs) at (-1.6,-1.4)  {$ \cB_{\Ol u\otimes v;c} ~= $};
  \end{tikzpicture}
  \label{eq:cz_uv,c}
  \ee
with the individual braidings in the picture being braidings in \C.
The braided monoidal structure on the functor \GC\ is given by the coherent family
$\id_{\Ol u} \oti \cb_{v,\Ol x} \oti \id_y$ of isomorphisms from
$\Ol u \oti v \oti \Ol x \oti y$ to $\Ol u \oti \Ol x \oti v \oti y$
(for details see Appendix \ref{more:GC}).

\medskip
 
In the theories relevant to us the braiding obeys a
non-degeneracy condition. This condition can be formulated in several equivalent ways: 

\begin{Definition}[Modular tensor category]\label{def:modular}
A \emph{modular tensor category} is a finite ribbon category \C\ which satisfies
one of the following equivalent \cite{shimi10} conditions:
  \Itemize
  \item 
The canonical functor \GC\ \eqref{eq:CbC->ZC} is a braided equivalence.
  \item 
The Hopf pairing $\omegaL$ \eqref{eq:hopa} on the coend $L \iN \C$ is non-degenerate.
  \item 
The linear map $\HomC(\one,L)\To \HomC(L,\one)$ that is induced by
the Hopf pairing $\omegaL$ is an isomorphism of vector spaces.
  \item
The category $\C$ has no non-trivial transparent objects, i.e.\ any object
having trivial monodromy with every object is a finite direct sum
of copies of the tensor unit $\one$.
  \end{itemize}
\end{Definition}

Modularity of \C\ is crucial for constructing a modular functor in the sense of \cite{lyub6}.

\begin{Example}
(i)\,~Let us stress that for \C\ being modular it is \emph{not} required that it is
semisimple. If \C\ \emph{is} in addition semisimple, corresponding to the case of rational
CFTs, then the conditions in Definition \ref{def:modular} are equivalent \cite{brug2,muge6}
to the familiar requirement that the modular S-matrix is non-degenerate. 
 \\
In the semisimple case, each indecomposable object is simple, so that in
particular up to isomorphism there are finitely many indecomposable objects. For
instance, in the case of the category $\C_{\mathfrak g,\ell}$ that is relevant for the
WZW model based on a semisimple Lie algebra $\mathfrak g$ and positive integer
$\ell$, the isomorphism classes of indecomposable objects are labeled by the finitely many 
integrable highest weights of the untwisted affine Lie algebra
$\mathfrak g^{\scriptscriptstyle(1)}$ at level $\ell$.
\\[2pt]
(ii)\,~Classes of logarithmic conformal field theories for which the representation
category of the chiral algebra is known to be a (non-semisimple) modular tensor category 
are the symplectic fermion models \cite{abe4,daRu,fagR2} and the $(p,1)$ triplet models
\cite{adMi3,tswo}.
While, like in the semisimple case,
the corresponding categories have finitely many simple objects up to isomorphism,
already for the arguably simplest logarithmic CFT, the $(2,1)$ triplet model, there are
uncountably many isomorphism classes of indecomposable objects \cite{fgst2}.
\\[2pt]
(iii)\,~The category $H$\Mod\ of \findim\ modules over any \findim\ factorizable ribbon 
Hopf (or, more generally, weak quasi-Hopf) algebra $H$ is a modular tensor category. It 
is semisimple iff $H$ is semisimple as an algebra.
\end{Example}


\subsection{The Cardy bulk algebra}\label{sec:bulkalgebra}

The property of a braided finite tensor category of being modular has important 
consequences. Two of these are of particular interest to us:
First, recall that in a full local conformal field theory the chiral degrees of freedom
of left and right movers taken together are described in terms of the enveloping category
$\Cb\boti\C$. In the case of a modular tensor category we can replace $\Cb\boti\C$ by the
Drinfeld center $\Z(\C)$. Working with $\Z(\C)$ allows us in the semisimple case to use 
the conformal blocks of the topological field theory of Turaev-Viro type that is
associated with \C\ (see e.g.\ \cite{balKi}). For Turaev-Viro type theories the obstruction
in the Witt group to the existence of boundary conditions \cite{fusV} vanishes, so that
such conformal blocks are also available for surfaces with boundary.
(The boundary Wilson lines are then labeled by \C\ itself, as befits a Cardy case.)

Second, a modular tensor category is in particular \emph{unimodular} \Cite{Prop.\,4.5}{etno2}.
It follows (see Theorems 4.10 and 5.6 of \cite{shimi7})
that the forgetful functor $U$ from \ZC\ to \C\ in \eqref{eq:Uforget} has a two-sided adjoint
$I\colon\, \C\To\Z(\C)$, i.e.\ is a Frobenius functor. (There are then corresponding
Frobenius algebras in \C\ and \ZC, and associated 
with them \Cite{Thm.\,8.2}{karv} Frobenius monads on \C\ and \ZC.)
As a result the object 
  \be
  F := I(\one) ~\in \Z(\C)
  \label{eq:F=I(1)}
  \ee
has a natural structure of a commutative Frobenius algebra in the braided tensor category $\Z(\C)$.
When transported to the enveloping category \CbC, the object \erf{eq:F=I(1)} is nothing but 
the coend $\Fc$ introduced in \erf{eq:Fc}, i.e.\ we have $\GC(\Fc) \eq F$. As explained there,
this object is expected to furnish the bulk state space for the Cardy case; its algebra 
structure provides the bulk field operator products, while the non-degenerate Frobenius form 
encodes the two-point correlation function of bulk fields on the sphere. To constitute the
bulk object of a consistent full CFT, $F$ must not only be a commutative symmetric Frobenius
algebra, but in addition also modular \cite{fuSc22}; this has so far only been fully 
established for the cases that the category \C\ is semisimple (see \Cite{Thm.\,3.4}{koRu2} 
and \cite{fuSc22}) or that it is the \rep\ category of a Hopf algebra \Cite{Cor.\,5.11}{fuSs3}.

\begin{Example}\label{X:1}
(i)\,~In the finitely semisimple case the Cardy case bulk algebra as an object of \CbC\ is 
the direct sum $\Fc_{\rm fss} \eq \bigoplus_{i\in I} x_i^\vee \boti x_i^{}$ as given in 
\eqref{eq:bulkalgebra-ssi}, while the Frobenius algebra $F \iN \Z(\C)$ can be written as the object
  \be
  F_{\rm fss} = \bigoplus_{i\in I}\, x_i^\vee \otimes x_i^{}
  \label{eq:Fss=sum_i..}
  \ee
in \C\ together with the half-braiding described explicitly e.g.\ in \Cite{Thm.\,2.3}{balKi}.
The Frobenius algebra structure (on $\Fc_{\rm fss}$) is given in \Cite{Lemma\,6.19}{ffrs}.
\\[2pt]
(ii)\,~It follows from Corollary 5.1.8 of \cite{KEly} that the coend 
$\Fc \eq \int^{c\in \C} c^\vee \boti c \iN \CbC$ can be written as
  \be
  \Fc \cong \big( P \boti P \big) / N \,,
  \ee
where $P \eq \bigoplus_{i\in I} P_i$, the direct sum of (representatives for the isomorphism 
classes of) all indecomposable projective objects of \C, is a projective generator, and
$N$ is the subspace obtained by acting with $f \boti \id_P - \id_P \boti f$ for all
$f\iN\EndC(P)$. A representation-theoretic description of $N$ has been given in
\Cite{Sect.\,3.3}{garu2} for a class of models that includes in particular 
the logarithmic $(p,1)$ triplet models, as the kernel of
a pairing defined in terms of three-point conformal blocks with one insertion from the
the vertex algebra itself. In the particular case of the $(2,1)$ triplet model (of Virasoro
central charge $-2$) which describes symplectic fermions, this kernel can be expressed
in terms of the zero mode of the chiral fermion field \Cite{Eq.\,(2.11)}{garu}.
 \\
It is also known that for the $(p,1)$ models the class $[F]$ in the Grothendieck ring
is given by $\sum_{i\in I} [x_i \oti P_i]$, with $x_i^{} \,{\cong}\, x_i^\vee$
the simple objects and $P_i$ the indecomposable projectives (which are
the projective covers of the $x_i$) \Cite{Sect.\,4.4}{garu2} or, what is the same
(see e.g.\ \cite{fuSs4}), by $\sum_{i,j\in I} C_{ij}\, [x_i \oti x_j]$ with 
$(C_{ij})$ the Cartan matrix of the category.
For comparison, the Cartan matrix of a semisimple category is the identity matrix,
so that $[F_{\rm fss}] \eq \sum_{i\in I} [x_i^\vee \oti x_i^{}]$, in agreement with
\eqref{eq:Fss=sum_i..}.
\\[2pt]
(iii)\,~For $\C \eq H$\Mod\ the category of \findim\ left modules over a \findim\ factorizable 
ribbon Hopf algebra $H$, the enveloping category \CbC\ is braided equivalent to the category
of \findim\ $H$-bimodules \Cite{App.\,A.2}{fuSs3}. The coend $\Fc$ is then the dual vector
space $H^*$ endowed with the co-regular left and right $H$-actions \Cite{App.\,A.1}{fuSs3},
while $\Ze \iN H\Mod$ is $H^*$ with the co-adjoint left $H$-action \Cite{Sect.\,4.5}{vire4}.
\end{Example}

It should, however, be appreciated that a decomposition into a direct sum of factorized 
objects, as for $\Fc_{\rm fss}$, no longer occurs when \C\ is non-semisimple, not even in 
the Cardy case, in which $\Fc$ is instead given by \erf{eq:Fc}. For a general, not 
necessarily semisimple, modular tensor category,
the Cardy case bulk algebra in \CbC\ is the coend $\Fc$ as given in in \erf{eq:Fc},
while in \ZC\ it is the object $F$ consisting of the object in \C\ given by the coend
  \be
  U(F) = \Ze = \int^{c\in \C}\! c^\vee\otimes c \,.
  \label{eq:UFL}
  \ee
together with a half-braiding.
In particular, the object $\Ze$ of \C, besides having a structure of Hopf algebra in \C,
also naturally comes with a half-braiding $\gamma$ such that $(\Ze,\gamma) \eq F \iN \ZC$
has a structure of a symmetric commutative Frobenius algebra in \ZC, and this Frobenius
structure is unique up to a scalar (see \Cite{Lemma\,3.5}{dmno} and \Cite{Thm.\,5.6}{shimi7}).
This half-braiding can be obtained explicitly by realizing that 
$Z(\Ze) \,{\cong}\, \Ze \oti \Ze$ and using the dinatural morphisms for $Z(\Ze)$ as a 
coend \erf{eq:Z(c)} together with the product $\mul_F$. It is, however, best described with 
the help of the \emph{monad} structure on the endofunctor $c \,{\mapsto}\, Z(c)$ and
realizing that modules over the monad $Z$ are the same as objects with a half-braiding. The 
object $Z(c)$ has a canonical structure of a $Z$-module.

\medskip

We will need to know the Frobenius algebra structure on the object $F$ of $\ZC$ explicitly.
Let us first describe the algebra structure on $F$. The relevant commutative associative
multiplication on $\Fc \iN \CbC$ has been described in \Cite{Prop.\,2.3}{fuSs6}. We need 
to transport this product along the functor \GC. When doing so we must account for the
monoidal structure on \GC: For $H\colon \D \To \mathcal D'$ a tensor functor with monoidal
structure $\varphi$ and $A$ an algebra in $\mathcal D$ with product $m$, the corresponding
product $m'$ on the algebra $H(A)$ is the composition 
$H(m) \cir \varphi_{A,A}\colon H(A) \oti H(A) \To H(A \oti A) \To H(A)$.
As shown in Lemma \ref{lem:braidedstructureonGC}, the monoidal structure on \GC\
is given by a braiding; we then find the following description of the multiplication
morphism $\mul_F \eq \GC(\mul_{\Fc}) \cir \varphi_{\Fc,\Fc}$ in \ZC:
\def\lochc  {0.3}   
\def\locwb  {0.4}   
  \be
  \begin{tikzpicture}
  \braid[ line width=\piclinesize, height=0.6cm, width=\locwb cm,
     style strands={1,2,3,4}{\xxcolor}
  ] (braid) at (0,0) s_3^{-1} s_2^{-1};
  \filldraw[fill=yellow!40, draw=\xxcolor, thick]
     (-0.3*\locwb,0) -- (1.3*\locwb,0) -- (1.3*\locwb,\lochc) -- (-0.3*\locwb,\lochc) -- cycle ;
  \filldraw[fill=yellow!40, draw=\xxcolor, thick]
     (1.7*\locwb,0) -- (3.3*\locwb,0) -- (3.3*\locwb,\lochc) -- (1.7*\locwb,\lochc) -- cycle ;
  \draw[very thick,color=\xxcolor] (0.5*\locwb,\lochc) .. controls (0.5*\locwb,\lochc+0.6) and
                                   (1.5*\locwb-0.2,\lochc+0.3) .. (1.5*\locwb-0.2,\lochc+0.9) ;
  \draw[very thick,color=\xxcolor] (2.5*\locwb,\lochc) .. controls (2.5*\locwb,\lochc+0.6) and
                                   (1.5*\locwb+0.2,\lochc+0.3) .. (1.5*\locwb+0.2,\lochc+0.9) ;
  \drawDinatWLegs {1.5*\locwb} {\lochc+1.1} {0.4} {0.3} {\ssg \iL_{y\otimes x}} {0}
  \draw[very thick,color=\Lcolor] (1.5*\locwb,\lochc+1.1) -- (1.5*\locwb,\lochc+1.8) ;
  \node (lhs) at (-2.7,-.2) {$ \mul_F^{} \circ (\iL_x \oti \iL_y) ~= $};
  \node[at=(braid-rev-1-e),below=\piclabetsep] {$\ssg x^\vee $};
  \node[at=(braid-rev-2-e),below=\piclabetsep] {$\ssg ~x^{\phantom\vee} $};
  \node[at=(braid-rev-3-e),below=\piclabetsep] {$\ssg y^\vee $};
  \node[at=(braid-rev-4-e),below=\piclabetsep] {$\ssg ~y^{\phantom\vee} $};
  \node (F) at (1.5*\locwb,\lochc+2.08) {$\ssg \Ze $};)
  \end{tikzpicture}
  \label{eq:mF}
  \ee
Here the two un-labeled coupons stand for the (trivial) identifications
$\id_{x^\vee_{}} \oti \id_{y^\vee_{}} \eq \id_{(y\otimes x)^\vee_{}}$
and $\id_{y} \oti \id_{x} \eq \id_{y\otimes x}$, respectively.

\begin{lem}~%
{\rm (i)}\, The morphism $\mul_F^{}$ in \ZC\ defined by \eqref{eq:mF} is a commutative 
associative multiplication for $F$.
\\[2pt]
{\rm (ii)}\, The morphism $\eta_F^{} \,{:=}\, \iZe_\one$ is a unit for the product $\mul_F^{}$.
\end{lem}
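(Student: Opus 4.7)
The plan is to reduce both claims to two structural facts established earlier: the canonical functor $\GC\colon \CbC \To \ZC$ is braided monoidal, with monoidal constraint $\varphi$ whose components on factorized objects are of the form $\id_{\Ol u} \oti \cb_{v,\Ol x} \oti \id_y$ (Lemma~\ref{lem:braidedstructureonGC}); and $(\Fc,\mul_{\Fc}^{},\eta_{\Fc}^{})$ is a commutative associative unital algebra in $\CbC$, with unit the dinatural component $\iota^{\Fc}_\one\colon \one \To \Fc$ and product obtained from the obvious pairing of dinatural legs (\Cite{Prop.\,2.3}{fuSs6}). A braided monoidal functor preserves commutative algebras, so transport along $\GC$ automatically endows $F \eq \GC(\Fc)$ with a commutative associative unital algebra structure, whose multiplication is $\GC(\mul_{\Fc}^{}) \cir \varphi_{\Fc,\Fc}^{}$ and whose unit is $\GC(\iota^{\Fc}_\one)$. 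The remaining task is to identify these transported data with the explicit expressions \erf{eq:mF} and $\iZe_\one$, respectively.

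For (i), the matching with \erf{eq:mF} is immediate on inspection: the two unlabelled coupons implement the monoidal identifications $\id_{x^\vee} \oti \id_{y^\vee} \eq \id_{(y\oti x)^\vee}$ and $\id_y \oti \id_x \eq \id_{y\oti x}$ which are inherent in the coend description of $\mul_{\Fc}^{}$, while the braidings between the middle legs are exactly the components of $\varphi_{\Fc,\Fc}^{}$ on dinatural legs (as read off from the half-braiding $\cB_{\Ol u \oti v;c}$ depicted in \erf{eq:cz_uv,c}). Associativity of $\mul_F^{}$ is then obtained by applying $\GC$ to the associativity diagram for $\mul_{\Fc}^{}$ and pasting with the coherence hexagon of $\varphi$; commutativity follows analogously, using crucially that $\varphi$ intertwines the braidings of $\CbC$ and $\ZC$, so that the symmetry of $\mul_{\Fc}^{}$ with respect to the braiding of $\CbC$ transports to the symmetry of $\mul_F^{}$ with respect to the braiding $\cz$ of $\ZC$.

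For (ii), the key observation is that $\GC$ sends the dinatural family $(\iota^{\Fc}_c \colon c^\vee \boti c \To \Fc)_{c\in\C}$ to a dinatural family of morphisms $c^\vee \oti c \To \Ze$ in $\C$ which, together with the half-braiding supplied by $\GC$, exhibits $F$ as the coend $\int^{c \in \C} c^\vee \oti c$ in $\ZC$; in particular $\GC(\iota^{\Fc}_\one) \eq \iZe_\one$. The left and right unit axioms for $\eta_F^{} \eq \iZe_\one$ with respect to $\mul_F^{}$ are then the image under $\GC$ of the corresponding axioms for $\iota^{\Fc}_\one$ with respect to $\mul_{\Fc}^{}$, combined with the unit compatibility of $\varphi$. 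The principal obstacle is the bookkeeping required to reconcile the two coend presentations of $F$ (internal to $\CbC$ via $\Fc$, and internal to $\C$ via $\Ze$ equipped with a half-braiding) and to track the precise form of $\varphi$; should the transport argument prove unwieldy, one may instead verify each axiom directly by composing with products $\iL_x \oti \iL_y$ (resp.\ $\iL_x \oti \iL_y \oti \iL_z$) of dinatural morphisms and reducing the resulting identities in $\C$ to a ribbon-graphical isotopy.
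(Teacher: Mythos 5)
Your proposal is correct, and its organizing idea -- transporting the algebra structure of $\Fc$ along the braided monoidal functor $\GC$ -- is exactly the one the paper uses to set up the lemma ($\mul_F \eq \GC(\mul_{\Fc})\cir\varphi_{\Fc,\Fc}$). The difference is how far you push it. The paper invokes the transport argument only for associativity ("guaranteed by the fact that $\mul_F$ is obtained from an associative product for $\Fc$") and then verifies associativity, commutativity and unitality \emph{directly}: commutativity is reduced, via the expression of $\cz_{F,F}$ through $\cz_{x^\vee\otimes x,y^\vee\otimes y}$ and the dinaturality identity $\iL_{y\otimes x}\cir(\id_{(y\otimes x)^\vee}\oti\cb_{x,y}) \eq \iL_{x\otimes y}\cir(\cb_{x^\vee,y^\vee}\oti\id_{x\otimes y})$, to an explicit braid identity, and the unit property is the one-line observation that for $y\eq\one$ all braidings and coupons in \erf{eq:mF} trivialize, so $\mul_F\cir(\iL_\one\oti\iL_x)\eq\iL_x\eq\mul_F\cir(\iL_x\oti\iL_\one)$. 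You instead derive commutativity and unitality abstractly from the facts that braided monoidal functors preserve commutative unital algebras, that $\mul_{\Fc}$ is commutative in \CbC\ by \Cite{Prop.\,2.3}{fuSs6}, and that $\GC$ is braided monoidal by Lemma \ref{lem:braidedstructureonGC}; this is valid and shorter, at the cost of resting entirely on those cited inputs, whereas the paper's graphical computation is self-contained and explicitly exhibits how the half-braiding of $F$ in \ZC\ enters -- which is the one place a sign or convention error could hide. Two small imprecisions in your identification step, neither fatal: of the two crossings in \erf{eq:mF} only one (namely $\cb_{x,y^\vee}$) is the component of $\varphi_{\Fc,\Fc}$, the other ($\cb_{x,y}$) is already part of $\mul_{\Fc}$ in \CbC; and for the unit you do not need the full claim that $\GC$ carries the dinatural family of $\Fc$ to one exhibiting $F$ as a coend in \ZC\ -- it suffices that $\GC(\iota^{\Fc}_\one)\eq\iZe_\one$, which is immediate, or simply the paper's direct check.
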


\begin{proof}
(i)\, Associativity is guaranteed by the fact that $\mul_F$ is obtained
from an associative product for $\Fc$. It can also be verified directly through an
exercise in braid gymnastics,
which pictorially looks as follows:
 \def\locPutDinatWLegsEtcOnTop{%
  \filldraw[fill=yellow!40, draw=\xxcolor, thick]
     (-0.3*\locwb,0) -- (2.3*\locwb,0) -- (2.3*\locwb,\lochc) -- (-0.3*\locwb,\lochc) -- cycle
     (2.7*\locwb,0) -- (5.3*\locwb,0) -- (5.3*\locwb,\lochc) -- (2.7*\locwb,\lochc) -- cycle;
  \drawDinatWLegs {2.5*\locwb} {\lochc+1.1} {0.4} {0.3} {} {0}
  \draw[very thick,color=\Lcolor] (2.5*\locwb,\lochc+1.1) -- +(0,0.8) ;
  \draw[very thick,color=\xxcolor] (\locwb,\lochc) .. controls (\locwb,\lochc+0.6) and
                                   (2.5*\locwb-0.2,\lochc+0.3) .. (2.5*\locwb-0.2,\lochc+0.9) ;
  \draw[very thick,color=\xxcolor] (4*\locwb,\lochc) .. controls (4*\locwb,\lochc+0.6) and
                                   (2.5*\locwb+0.2,\lochc+0.3) .. (2.5*\locwb+0.2,\lochc+0.9) ;
  }
  \def\locph  {3.7}   
  \def\locpe  {2.86}  
  \def\locpv  {1.2}   
  \def\locpw  {0.6}   
  \def\locpx  {4.3}   
  \be
  \begin{tikzpicture}
  \node at (-4.05,-\locpw) {$\mul_F\cir(\mul_F\oti\id_F)$};
  \node at (-2.8,-\locpv) {$\cir(\iL_x{\otimes}\iL_y{\otimes}\iL_z)~=$};
  \node at (\locpe,-\locpv) {$=$};
  \node at (\locpe+\locph,-\locpv) {$=$};
  \node at (6.1,-\locpx) {$=~\mul_F\cir(\id_F\oti\mul_F)\cir(\iL_x{\otimes}\iL_y{\otimes}\iL_z)\,.$};
  \braid[ line width=\piclinesize, height=0.6cm, width=\locwb cm,
     style strands={1,2,3,4,5,6}{\xxcolor}
     ] (braid) at (0,0) s_4^{-1} s_3^{-1}-s_5^{-1} s_4^{-1} s_3^{-1} s_2^{-1};
  \locPutDinatWLegsEtcOnTop
  \drawDinatWLegs {2.5*\locwb} {\lochc+1.1} {0.4} {0.3} {\ssg \iL_{z\otimes y\otimes x}} {0}
  \node[at=(braid-rev-1-e),below=\piclabetsep] {$\ssg x^\vee $};
  \node[at=(braid-rev-2-e),below=\piclabetsep] {$\ssg ~x^{\phantom\vee} $};
  \node[at=(braid-rev-3-e),below=\piclabetsep] {$\ssg y^\vee $};
  \node[at=(braid-rev-4-e),below=\piclabetsep] {$\ssg ~y^{\phantom\vee} $};
  \node[at=(braid-rev-5-e),below=\piclabetsep] {$\ssg z^\vee $};
  \node[at=(braid-rev-6-e),below=\piclabetsep] {$\ssg ~z^{\phantom\vee} $};
  \begin{scope}[shift={(\locph,0)}]
  \braid[ line width=\piclinesize, height=0.6cm, width=\locwb cm,
     style strands={1,2,3,4,5,6}{\xxcolor}
     ] (braid) at (0,0) s_4^{-1} s_5^{-1} s_4^{-1} s_3^{-1} s_2^{-1}-s_4^{-1};
  \locPutDinatWLegsEtcOnTop
  \begin{scope}[shift={(\locph,0)}]
  \braid[ line width=\piclinesize, height=0.6cm, width=\locwb cm,
     style strands={1,2,3,4,5,6}{\xxcolor}
     ] (braid) at (0,0) s_5^{-1} s_4^{-1} s_3^{-1} s_2^{-1}-s_5^{-1} s_4^{-1};
  \locPutDinatWLegsEtcOnTop
  \end{scope}
  \end{scope}
  \end{tikzpicture}
  \ee
Commutativity is seen as follows. With the help of the dinatural family $\iL$, the
braiding $\cz_{F,F}$ can be expressed in terms of the braidings 
$\cz_{x^\vee_{}\otimes x,y^\vee_{}\otimes y}$ such that
  \be
  \mul_F \circ \cz_{F,F} \circ (\iL_x \oti \iL_y)
  = \mul_F \circ (\iL_y \oti \iL_x)  \circ \cz_{x^\vee_{}\otimes x,y^\vee_{}\otimes y} \,.
  \ee
Moreover, dinaturality of $\iL$ implies the identity
  \be
  \iL_{y\otimes x} \circ (\id_{(y\otimes x)^\vee_{}} \oti \cb_{x,y})
  = \iL_{x\otimes y} \circ ( \cb_{x^\vee_{},y^\vee_{}} \oti \id_{x\otimes y}) \,.
  \ee
Combining these equalities with the definition \erf{eq:mF} of $\mul_F$, commutativity boils
down to the relation
  \be
  \bearl
  ( \cb_{y^\vee_{},x^\vee_{}} \oti \id_{x\otimes y}) 
  \circ (\id_{y^\vee_{}} \oti \cb_{y,x^\vee_{}} \oti \id_x)
  \circ \cz_{x^\vee_{}\otimes x,y^\vee_{}\otimes y} 
  \Nxl2  \hspace*{11em}
  = (\id_{x^\vee_{}\otimes y^\vee_{}} \oti \cb_{x,y})
  \circ (\id_{x^\vee_{}} \oti \cb_{x,y^\vee_{}} \oti \id_y) \,.
  \eear
  \ee
After inserting the expression \erf{eq:cz_uv,c} for $\cz$, this reduces to the identity
 \def\lochb  {0.5}   
 \def\lochB  {1.25}  
 \def\locwb  {0.5}   
  \be
  \begin{tikzpicture}
  \braid[ line width=\piclinesize, height=\lochb cm, width=\locwb cm,
  style strands={1,2,3,4}{\xxcolor}
  ] (braid) at (0,0) s_1^{-1} s_2^{-1} s_2 s_1-s_3^{-1} s_2^{-1};
  \node[at=(braid-rev-1-e),below=\piclabetsep] {$\ssg x^\vee $};
  \node[at=(braid-rev-2-e),below=\piclabetsep] {$\ssg ~x^{\phantom\vee} $};
  \node[at=(braid-rev-3-e),below=\piclabetsep] {$\ssg y^\vee $};
  \node[at=(braid-rev-4-e),below=\piclabetsep] {$\ssg ~y^{\phantom\vee} $};
  \node[blue!70!black,rotate=90] at (-0.48,-1.97)
                                 {$\scs \cz_{x^\vee_{}\otimes x,y^\vee_{}\otimes y}$};
  \draw[blue!70!black] (-0.2,-2.5*\lochb) rectangle (0.2+3*\locwb,-5.5*\lochb);
  \node at (2.63,-1.6) {$=$};
  \braid[ line width=\piclinesize, height=\lochB cm, width=\locwb cm,
  style strands={1,2,3,4}{\xxcolor}
  ] (braid) at (3.6,0) s_3^{-1} s_2^{-1};
  \node[at=(braid-rev-1-e),below=\piclabetsep] {$\ssg x^\vee $};
  \node[at=(braid-rev-2-e),below=\piclabetsep] {$\ssg ~x^{\phantom\vee} $};
  \node[at=(braid-rev-3-e),below=\piclabetsep] {$\ssg y^\vee $};
  \node[at=(braid-rev-4-e),below=\piclabetsep] {$\ssg ~y^{\phantom\vee} $};
  \end{tikzpicture}
  \ee
among braids and is thus indeed satisfied.
\nxl1
(ii) By definition of the product one has $\mul_F \cir (\iL_\one \oti \iL_x) \eq 
 \iL_x \eq \mul_F \cir (\iL_x \oti \iL_\one)$ for all $x \iN \C$, i.e.\ $\iL_\one$
satisfies the properties of a unit for $\mul_F$.
\end{proof}

\medskip

It is worth noting that the product and unit of $F \iN \ZC$ are the same, when regarded 
as morphisms in \C, as those of the Hopf algebra $L \eq \Ze \eq U(F) \iN \C$,
  \be
  \eta_F = \etaL \qquand \mul_F = \mulL \,.
  \label{etaF=etaL,mulF=mulL}
  \ee
This should in fact not come as a surprise. Indeed, the antipode of the Hopf algebra $L$ is
invertible, and $H$ admits a left integral $\Lambda \iN \HomC(\one,H)$ and a right cointegral 
$\lambda  \iN \HomC(H,\one)$ such that $\lambda \cir \Lambda \iN \EndC(\one)$ is invertible.
By definition, a left integral $\Lambda$ of $H$ is a morphism in $\HomC(\one,H)$ that
intertwines the trivial and regular left $H$-actions, i.e.\ satisfies
  \be
  \LambdaL \circ \epsL = \mulL \circ (\id_L \oti \LambdaL) \,.
  \ee
Similarly, a right cointegral $\lambda$ obeys by definition
  \be
  \etaL \circ \lambdaL = (\lambdaL \oti \id_l) \circ \DeltaL \,.
  \ee
Since the category \C\ is unimodular, so is the Hopf algebra $L$, i.e.\ the left integral is
also a right integral, i.e.\ satisfies $\LambdaL \cir \epsL \eq \mulL \circ (\LambdaL \oti \id_l)$,
implying in particular that the integral is invariant under the antipode,
$  \apoL \cir \LambdaL \eq \LambdaL $.  
Now it is known \Cite{App.\,A.2}{fuSc17} that in a linear ribbon category any Hopf algebra 
with invertible antipode and (co)integrals for which $\lambda \cir \Lambda$ is invertible,
also carries a natural Frobenius algebra structure, with the same product and unit,
but with different coalgebra structure.

\medskip

Moreover, the Hopf algebra structure on the object $L \eq U(F)$ in \C\ together with the
integral and cointegral of $L$ also induces a coalgebra structure on $U(F)$ that is part
of its Frobenius algebra structure in \C\ \Cite{App.\,A.2}{fuSc17}. In view of 
\erf{etaF=etaL,mulF=mulL} one would expect that also the coalgebra structure on $F$ is 
such that upon forgetting the half-braiding on $F$ it reproduces this Frobenius coalgebra 
structure on $U(F)$ in \C. This indeed turns out \Cite{Sect.\,5}{shimi9} to be the case. In
particular, the Frobenius counit is given by the cointegral of $L$.
This conclusion also coincides with the expectations from CFT.
Indeed, the counit $\eps_F$ is provided by the one-point correlator of bulk fields 
on the sphere \cite{fuSc22}, and by comparison with the one-point correlators for 
semisimple \C\ one expects that, as a morphism in \C, it is a (non-zero) cointegral 
of the Hopf algebra $L$,
  \be
  \epsF = \lambdaL \,.
  \ee
Actually, this is already implied by the fact \Cite{Sect.\,5.1}{shimi9} that the 
relevant morphism space is one-dimensional:
  \be
  \HomZ(F,\one_\ZC) = \HomZ(I(\one),\one_\ZC)
  \,{\cong}\,\, \HomC(\one,U(\one_\ZC)) = \HomC(\one,\one) \cong \koc \,.
  \ee

Given the counit, the \emph{Frobenius form} $\omegaF\colon F \oti F \To \one$ is given by 
  \be
  \omegaF = \epsF^{} \circ \mul_F^{} = \lambdaL \circ \mulL \,.
  \label{eq:omegaF}
  \ee
This is indeed non-degenerate, as required for a Frobenius form; a side inverse
$\omegaFm$, satisfying $(\omegaF \oti \id_F) \cir (\id_F \oti \omegaFm) \eq \id_F
\eq (\id_F \oti \omegaF) \cir (\omegaFm \oti \id_F)$ is given by
(see e.g. \Cite{Eq.\,(3.33)}{fuSc17})
  \be
  \omegaFm = (\id_L \oti \apoL) \circ \DeltaL \circ \LambdaL
  \label{eq:omegaFm}
  \ee
with $\apoL$ and $\DeltaL$ the antipode and coproduct of $L$, and $\LambdaL$ an 
integral of $L$ satisfying 
  \be
  \lambdaL \circ \LambdaL = 1 \,.
  \label{lambdaL.LambdaL=1}
  \ee

As for any Frobenius algebra \cite{fuSt}, the Frobenius coproduct can be obtained from 
the product $\mul_F \eq \mulL$ by appropriately composing with 
$\omegaF$ and its side inverse. Concretely, with the help of the isomorphism
  \be
  \Phi := (\omegaF \oti \id_{F^\vee_{}}) \circ (\id_F \oti \coev_F) ~\in \HomC(F,F^\vee)
  \label{eq:Phi}
  \ee
we can write (compare \Cite{Eq.\,(5.5)}{shimi9})
  \be
  \bearll
  \Delta_F \!\!& 
  = (\id_F \oti \mulF) \circ (\id_F \oti \Phi^{-1} \oti \id_F) \circ (\coev_F \oti \id_F) 
  \Nxl2&
  = (\mulF\oti \id_F) \circ (\id_F \oti \id_F \oti \Phi^{-1}) \circ (\id_F \oti \coev_F) \,.
  \eear
  \label{eq:DeltaF}
  \ee
By using the explicit expression
  \be
  \Phi^{-1} = (\ev_F \oti \id_F) \circ (\id_{F^\vee_{}} \oti \omegaFm)
  \label{eq:Phi-1}
  \ee
for the inverse of \eqref{eq:Phi}, this can be rewritten as 
  \be
  \bearll
  \Delta_F \!\!&
  = (\id_F \oti \mulF) \circ (\id_F \oti \apoL \oti \id_F) \circ
  \big( (\DeltaL \cir \LambdaL) \oti \id_F \big)
  \Nxl2&
  = (\mulF \oti \apoL) \circ \big( \id_F \oti (\DeltaL \cir \LambdaL) \big)
  = (\mulF \oti \id_F) \circ ( \id_F \oti \omegaFm ) \,, \eear
  \label{eq:DeltaF=A10}
  \ee
thereby reproducing formula (A.10) of \cite{fuSc17}.
A mirror version of this formula is valid as well; graphically, the two formulas look like
  \be
  \begin{tikzpicture}
  \node at (-2.5,0.25) {$\DeltaF ~=$};
  \draw[very thick,color=\Lcolor] (0,0) -- (0,0.5) arc (0:180:0.6 cm) -- (-1.2,-1);
  \draw[very thick,color=\Lcolor] (-0.6,1.1) -- +(0,0.7);
  \draw[very thick,color=\Lcolor] (0.6,0) -- +(0,1.8);
  \filldraw[color=red!60!black] (-0.6,1.1) circle (0.11) node[below=0.6pt]{$\scs \mulF$};
  \drawomegaFm {0.3} {0}
  \node at (2.12,0.25) {$=$};
  \begin{scope}[shift={(3.6,0)}]
  \draw[very thick,color=\Lcolor] (0,0) -- +(0,1.8);
  \draw[very thick,color=\Lcolor] (0.6,0) -- (0.6,0.5) arc (180:0:0.6 cm) -- (1.8,-1);
  \draw[very thick,color=\Lcolor] (1.2,1.1) -- +(0,0.7);
  \filldraw[color=red!60!black] (1.2,1.1) circle (0.11) node[below=0.6pt]{$\scs \mulF$};
  \drawomegaFm {0.3} {0}
  \end{scope}
  \end{tikzpicture}
  \ee
It follows e.g.\ that the integral satisfies (compare \Cite{Prop.\,5.5}{shimi9})
$ \Lambda \eq \big( \id_F \oti (\eps_L \cir \Phi^{-1}) \big) \cir \coev_F$.  

\medskip

Let us also show that $F \iN \ZC$ has trivial twist, By definition, the twist on the 
ribbon category \CbC\ is given by $\theta^{-1} \boti \theta$ with $\theta$
the twist on \C. Accordingly the following statement is not surprising:

\begin{lem}\label{lem:twistZC}
For $u,v\iN\C$ the twist of the object $\GC(u\boti v) \iN \ZC$ can be expressed as
  \be
  \theta^{\ZC}_{\GC(u\boxtimes v)} = \theta_u^{-1} \oti \theta_v^{}
  \label{eq:thetaZC}
  \ee
in terms of the twist $\theta$ in \C.
\end{lem}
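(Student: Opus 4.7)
The equality \eqref{eq:thetaZC} is precisely the statement that the canonical functor $\GC\colon \Cb \boti \C \to \ZC$ is ribbon, applied to the simple tensor $\ol u \boxtimes v$. Indeed, the twist on $\Cb \boti \C$ is $\theta^{\Cb} \boxtimes \theta^{\C} = (\theta^{\C})^{-1} \boxtimes \theta^{\C}$ (reversing the braiding reverses the twist), so on $\ol u \boxtimes v$ it acts by $\theta_u^{-1} \boxtimes \theta_v$. Once we know that $\GC$ preserves twists, and recalling that on a simple tensor $\GC$ does not invoke its monoidal constraint, this morphism is taken to $\theta_u^{-1} \otimes \theta_v \in \EndC(u \oti v)$, which is the right-hand side of \eqref{eq:thetaZC}.

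Thus the substance of the lemma is that $\GC$ is a ribbon functor. The plan is to extract this from two facts already in place: first, $\GC$ is braided monoidal (Lemma \ref{lem:braidedstructureonGC}); second, $\GC$ is manifestly compatible with the dualities and pivotal structures, because both $\Cb \boti \C$ and $\ZC$ inherit their dualities and pivotal structure directly from $\C$ (as recalled in the footnote after \eqref{eq:Uforget}). In any ribbon category the twist can be expressed purely in terms of braiding, duality, and pivotal structure, e.g.\ as the partial trace
\[
\theta_x = (\id_x \oti \tev_x) \cir (\cb_{x,x} \oti \id_{{}^{\vee\!}x}) \cir (\id_x \oti \tcoev_x),
\]
where the right evaluation/coevaluation are built from $\ev,\coev$ using the pivotal isomorphism. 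A braided monoidal functor that preserves duality and pivotal structure therefore preserves $\theta$ automatically.

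An alternative, more direct route would be to substitute the explicit half-braiding \eqref{eq:cB}, \eqref{eq:cz_uv,c} of $\GC(\ol u \boxtimes v)$ into a formula for $\theta^{\ZC}$ on an object of the Drinfeld center and simplify. The ribbon equation $\theta_{u\otimes v} = \cb_{v,u} \cir \cb_{u,v} \cir (\theta_u \oti \theta_v)$ in \C, together with the fact that the two braidings appearing in \eqref{eq:cz_uv,c} have \emph{opposite} orientation (one is $\cb$, the other $\cb^{-1}$), causes these braidings to uncouple rather than to compose into a double braiding; the leftover twist on the $u$-strand is therefore $\theta_u^{-1}$ while that on the $v$-strand is $\theta_v$.

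The main obstacle is mild and lies in the first approach: justifying that ``braided plus pivotal implies ribbon'' at the level of functors requires one to verify the above description of $\theta$ purely in ribbon-categorical terms and then check that $\GC$ commutes with each ingredient -- the braiding, the (co)evaluations, and the pivotal identification used to produce $\tev,\tcoev$. None of these steps are surprising, but the bookkeeping with the half-braiding \eqref{eq:cB} and the identification of $\GC$ on morphisms must be spelled out to conclude the identity \eqref{eq:thetaZC} as a morphism in \C.
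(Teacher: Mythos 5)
Your proposal is correct, and your primary route differs from the paper's. The paper proves the lemma by the second, ``more direct'' route you sketch as an alternative: it writes the twist $\theta^{\ZC}_{\GC(u\boxtimes v)}$ out through the braiding and dualities of \ZC, substitutes the explicit half-braiding \eqref{eq:cB}/\eqref{eq:cz_uv,c}, and simplifies the resulting ribbon picture directly; the opposite orientations of the two crossings in the half-braiding make the strands decouple into a negative twist on the $u$-strand and a positive twist on the $v$-strand, exactly as you describe. Your primary argument -- that $\GC$ is a ribbon functor, so it carries the twist $\theta_u^{-1}\boxtimes\theta_v^{}$ of $\Cb\boti\C$ (which the paper itself records just before the lemma) to $\theta_u^{-1}\oti\theta_v^{}$ -- is more conceptual and explains \emph{why} the identity holds, but it front-loads the work into the claim that a braided monoidal functor compatible with the pivotal structures automatically preserves twists. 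That claim is standard, yet verifying it here requires checking that $\GC$ intertwines the duality data of $\Cb\boti\C$ and \ZC\ (which match only up to the canonical isomorphism $(u\oti v)^\vee\,{\cong}\,v^\vee\oti u^\vee$ supplied by the monoidal structure of $\GC$) and the induced pivotal structures; you correctly flag this bookkeeping as the residual obstacle. The paper's computation avoids all of that at the cost of a braid-diagram manipulation of comparable length, so the two approaches are roughly equal in effort; yours generalizes more readily, while the paper's is self-contained given only Lemma \ref{lem:braidedstructureonGC} and the picture \eqref{eq:cz_uv,c}.
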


\begin{proof}
Expressing the twist of the ribbon category \ZC\ through the braiding and dualities
and using the explicit form \eqref{eq:cB} for the braiding of $\GC(u\boti v)$ in \ZC,
we have, pictorially:
\def\lochb  {0.5}  
\def\locwb  {0.6}  
\def\locpw  {3*\locwb}  
\def\locph  {2.0}  
\def\locpH  {1.5}  
\def\locphi {0.7}  
\def\locpwi {0.9}  
\def\locpwj {1.1*\locwb}  
\def\locpho {1.7}  
\def\locpso {2.0*\locwb}  
\def\locpwo {2.1}  
\def\locpx  {6.1}  
\def\locpxx {8.1}  
\def\locpy {-0.3}  
\def\locle  {1.6}  
  \be
  \begin{tikzpicture}
  \node (lhs) at (-1.7,-1.0) {$ \theta^{\ZC}_{\GC(u\boxtimes v)} ~~= $};
  \braid[ line width=\piclinesize, height=\lochb cm, width=\locwb cm, style strands={1,2,3,4}\locco,
  ] (braid) at (0,0) s_2 s_1-s_3^{-1} s_2^{-1};
  \draw[very thick,color=\locco] { (0,0) -- (0,\locle) };
  \draw[very thick,color=\locco] { (\locwb,0) -- (\locwb,\locle) };
  \draw[very thick,color=\locco] { (0,-\locph) -- (0,-\locph-\locle) };
  \draw[very thick,color=\locco] { (\locwb,-\locph) -- (\locwb,-\locph-\locle) };
  \node (u1) at (0,-\locph-\locle-.21) {$\ssg u $};
  \node (v1) at (\locwb,-\locph-\locle-.21) {$\ssg v $};
  \drawCloseToTwist \locpw 0 \locphi \locpwi \locph \locco
  \drawCloseToTwist \locpso 0 \locpho \locpwo \locph \locco
  \node (eq) at (4.6,-1.0) {$ = $};
  \braid[ line width=\piclinesize, width=\locwb cm, style strands={1,2}\locco,
  ] (braid) at (\locpx,\locpy) s_1;
  \braid[ line width=\piclinesize, width=\locwb cm, style strands={1,2}\locco,
  ] (braid) at (\locpxx,\locpy) s_1^{-1};
  \draw[very thick,color=\locco] { (\locpx,\locpy) -- (\locpx,\locle) };
  \draw[very thick,color=\locco] { (\locpxx,\locpy) -- (\locpxx,\locle) };
  \draw[very thick,color=\locco] { (\locpx,\locpy-\locpH) -- (\locpx,-\locph-\locle) };
  \draw[very thick,color=\locco] { (\locpxx,\locpy-\locpH) -- (\locpxx,-\locph-\locle) };
  \node (u2) at (\locpx,-\locph-\locle-.21) {$\ssg u $};
  \node (v2) at (\locpxx,-\locph-\locle-.21) {$\ssg v $};
  \drawCloseToTwist {\locpx+\locwb} \locpy \locphi \locpwj \locpH \locco
  \drawCloseToTwist {\locpxx+\locwb} \locpy \locphi \locpwj \locpH \locco 
  \end{tikzpicture}
  \ee
\end{proof}

\begin{lem}\label{lem:FinZC}
{\rm (i)}\, $F$ has trivial twist in \ZC.
\\[2pt]
{\rm (ii)}\, $F$ is a symmetric Frobenius algebra in \ZC.
\end{lem}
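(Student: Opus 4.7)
For part~(i), the plan is to appeal to the universal property of $F$ as a coend in $\ZC$. Since the braided functor $\GC$ is right exact and sends the defining coend $\Fc$ to $F$, we may view $F$ as $\int^{c \iN \C} \GC(c^\vee \boxtimes c)$ with dinatural family the morphisms $\iZe_c$ (now regarded as morphisms in $\ZC$). It then suffices to verify
\[
\theta^{\ZC}_F \cir \iZe_c = \iZe_c
\]
for every $c \iN \C$. By naturality of the twist together with Lemma~\ref{lem:twistZC}, the left hand side equals $\iZe_c \cir (\theta_{c^\vee}^{-1} \oti \theta_c)$. I would then invoke the ribbon identity $\theta_{c^\vee} = (\theta_c)^\vee$, equivalently $\theta_{c^\vee}^{-1} = (\theta_c^{-1})^\vee$, together with dinaturality of $\iZe$ applied to the endomorphism $\theta_c^{-1}\colon c \To c$, which yields
\[
\iZe_c \cir ((\theta_c^{-1})^\vee \oti \id_c) = \iZe_c \cir (\id_{c^\vee} \oti \theta_c^{-1}).
\]
Substituting into the previous expression collapses it to $\iZe_c \cir (\id_{c^\vee} \oti (\theta_c^{-1} \cir \theta_c)) = \iZe_c$, which by the coend universal property gives $\theta^{\ZC}_F = \id_F$.

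For part~(ii), recall that the preceding lemma showed that the product $\mulF$ on $F$ is commutative, and that the full Frobenius structure is described by \eqref{eq:omegaF}--\eqref{eq:DeltaF}. The symmetry condition amounts to the equality of the two morphisms $F \To F^\vee$ obtained from the Frobenius form $\omegaF = \epsF \cir \mulF$ by bending through $\ev_F$ and $\tev_F$, respectively. For a commutative Frobenius algebra in a ribbon category, a short graphical manipulation — expressing $\tcoev_F$ in terms of $\coev_F$, the braiding $\cz$ in $\ZC$, and the twist $\theta^{\ZC}_F$, and then using commutativity of $\mulF$ to absorb the braiding into the product — shows that these two bending morphisms differ precisely by post\-composition with $\theta^{\ZC}_F$. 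Since by part~(i) this twist is the identity, the two morphisms coincide and $F$ is symmetric.

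The main technical point is the graphical identification in~(ii) between the Nakayama-type discrepancy of the two bending maps and the twist on $F$; this is a standard but slightly delicate computation in the diagrammatic calculus of ribbon categories, and is the only place where the argument requires care beyond routine diagram-chasing. Part~(i) itself reduces, as above, to an essentially one-line application of dinaturality and the ribbon axioms once Lemma~\ref{lem:twistZC} is available.
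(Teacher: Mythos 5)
Your proposal is correct and follows essentially the same route as the paper's own proof: part (i) is the identical computation (naturality of the twist, Lemma \ref{lem:twistZC}, the ribbon identity $\theta_{c^\vee}^{}=(\theta_c^{})^\vee$ and dinaturality of the coend family), merely with the dinaturality step spelled out more explicitly, and part (ii) invokes the paper's exact argument that commutativity of $\mul_F$ combined with the trivial twist from (i) forces the two duality maps induced by the Frobenius form to coincide.
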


\begin{proof}
(i)\, By the naturality of the twist we have
$ \theta_F^{\ZC} \cir \iF_x \eq \iF_x \cir \theta^{\ZC}_{x^\vee_{}\otimes x} $, 
where on the right hand side $x^\vee \oti x$ is, by construction, the object
$\GC(x^\vee{\boxtimes}x)$ of \ZC. Lemma \ref{lem:twistZC} thus implies
  $  
  \theta_F^{\ZC} \cir \iF_x
  \eq \iF_x \cir \big( \theta^{-1}_{x^\vee_{}} \oti \theta^{}_x \big) .
  $  
By dinaturality of $\iF$ this becomes
  \be
  \theta_F^{\ZC} \circ \iF_x = \iF_x \circ \big( \id^{}_{x^\vee_{}} \otimes 
  (\theta^{-1}_x \cir \theta^{}_x) \big) = \iF_x \equiv \id_F \cir \iF_x \,.
  \ee
This holds for every $x \iN \C$; the claim thus follows by dinaturality.
\\[2pt]
(ii)\, That $F$ is Frobenius is a direct consequence of the construction of the
coalgebra structure from the Hopf algebra structure and (co-)integral of $L$.
That the Frobenius form \erf{eq:omegaF} is symmetric follows by combining the
facts that the product $\mul_F$ is commutative and that $F$ has trivial twist.
\end{proof}
\begin{rem}
In the case of semisimple \C, in which $F \eq \bigoplus_{i\in I} x_i^\vee \oti x_i^{}$,
Lemma \ref{lem:FinZC}(i) follows immediately by combining \eqref{eq:thetaZC} with
$\theta_{x^\vee}^{} \eq \theta_x^{}$. Lemma \ref{lem:FinZC}(ii) follows in this case 
from the fact that, by Lemma 6.19(ii) of \cite{ffrs}, $\Fc \iN \CbC$ is symmetric
Frobenius. (By the same lemma, for semisimple \C\ the Frobenius algebra $F$ is also
special; this ceases to be true for non-semisimple \C.)
\end{rem}

By construction, the morphisms $\mulF$, $\etaF$, $\DeltaF$ and $\epsF$ also endow,
when regarded as morphisms in \C, the object $U(F) \eq Z(\one) \iN \C$ with
the structure of a Frobenius algebra.
However, since the braiding and twist in \C\ are different from those in \ZC,
unlike $F\eq I(\one) \iN \ZC$ this Frobenius algebra is \emph{not} commutative, and $U(F)$ 
does not have trivial twist. (The existence of a two-sided adjunction between \C\ and \ZC\ 
does not require \C\ to be braided, so the lack of commutativity is not so surprising.)
Moreover, even though $U(F)$ is a symmetric algebra, the Frobenius form $\omegaF$ is not 
symmetric on the nose when regarded as a morphism in \C, but there is a minor deviation from
being symmetric.  Indeed, combining the expression \erf{eq:omegaF} for $\omegaF$ with general
properties of the cointegral, and with the fact that the square of the antipode $\apoL$ of 
$L$ is an inner automorphism, one concludes (compare \Cite{Thm.\,3}{radf12}) that
  \be
  \omegaF \circ \cb_{L,L} = \omegaF \circ (\id_L \oti \apoL^{-2}) \,.
  \label{omegaF.cLL}
  \ee
In other words, $\apoL^2$ is an inner Nakayama automorphism for the Frobenius algebra $U(F)$.
Going from \C\ to \ZC\ changes the braiding and requires a trivial Nakayama automorphism.
(Note that this means that $S_L^4$ is the identity in \ZC. This might be expected
to be the case for a consistent full CFT even beyond finite CFTs, because the
anomaly in the action of the modular group that is generically present
\cite{lyub6} on the chiral level should cancel out between left and right movers.)


\subsection{Characters and cocharacters}\label{sec:chii-cochi}

As outlined in the Introduction, the boundary states for the Cardy case furnish the 
standard mathematical structure of a ring homomorphism from the fusion ring
$K_0(\C)$ to the endomorphisms of the identity functor of \C. We now construct such
a homomorphism concretely with the help of the Hopf algebra $L$.  As ingredients
we need the notions of algebras and their modules in rigid monoidal categories.

The representation theory of (associative, unital) algebras in a rigid monoidal category
can be studied in full analogy with the classical case of algebras over \complex. In
particular, characters are defined as partial traces in the same way as (see e.g.\
\Cite{Sect.\,1.5}{loren}) for a \complex-algebra. (Below we will only be interested in the 
case that \C\ is ribbon and thus spherical, so that we do not have to distinguish
between left and right traces.)
In more detail, for $A \iN \C$ an algebra, an $A$-\emph{module} $(m,\rho_m)$ is an object 
$m \iN \C$ together with a representation morphism $\rho_m\colon A \oti m \To m$ that 
satisfies the usual compatibility requirements with the product and unit of $A$.

\begin{Definition}[Character]
The \emph{character} of an $A$-module $(m,\rho_m)$ in \C\ is the partial trace
  \be
  \chii_m^A := \tilde \ev_m \circ (\rho_m \oti \id_{m^\vee_{}}) \circ (\id_A \oti \coev_m) 
  ~\in \HomC(A,\one)
  \ee
of $\rho_m$.
\end{Definition}

Here $\tev_x\colon x \oti x^\vee \To \one$ is the (left) evaluation morphism,
and $\coev_x\colon \one \To x \oti x^\vee$ is the (right) coevaluation. Pictorially,
\def\locph  {0.7}  
\def\locpw  {0.45} 
\def\locpx  {0.4}  
\def\locpy  {0.6}  
\be
  \begin{tikzpicture}
  \node (lhs) at (-1.0,1.1) {$ \chii_m^A ~= $};
  \draw[very thick,color=\locco] (\locpx,\locpy+\locpw) -- (\locpx,\locpy+\locpw+\locph);
  \draw[very thick,color=\locco] (\locpx+2*\locpw,\locpy+\locpw) -- (\locpx+2*\locpw,\locpy+\locpw+\locph);
  \draw[very thick,color=\locco] (\locpx+2*\locpw,\locpy+\locpw) arc (360:180:\locpw cm);
  \draw[very thick,color=\locco,->] (\locpx+2*\locpw,\locpy+\locpw) arc (360:263:\locpw cm);
  \draw[very thick,color=\locco] (\locpx,\locpy+\locpw+\locph) arc (180:0:\locpw cm);
  \draw[very thick,color=\locco,->] (\locpx,\locpy+\locpw+\locph) arc (180:83:\locpw cm);
  \filldraw[fill=\locco!50!white,draw=black] (\locpx-0.1,\locpy+\locpw+\locph/2-0.1) rectangle (\locpx+0.1,\locpy+\locpw+\locph/2+0.1);
  \draw[very thick] (0,0) .. controls (0,\locpy+\locpw+\locph/2-0.1) and
                          (\locpx-0.08,\locpy) .. (\locpx-0.08,\locpy+\locpw+\locph/2-0.1);
  \node[anchor=north] at (0,0) {$\scs A $};
  \node at (\locpx-0.25,\locpy+\locpw+\locph/2+0.4) {$\scs m $};
  \end{tikzpicture}
  \ee
Characters split under extensions, i.e.\ for any short exact sequence 
  \be
  0\to m' \to m \to m'' \to 0 
  \ee
of $A$-modules we have 
  \be
  \chii^A_m = \chii^A_{m'} + \chii^A_{m''} \,.
  \label{eq:split}
  \ee
As a consequence, the character of any module is a sum of irreducible characters,
i.e.\ of characters of simple modules. Isomorphic modules have equal characters.

\medskip

{}From now on we assume that \C\ is braided. If $A$ is even a bialgebra, then one can form
tensor products of representations: The tensor product of two $A$-modules $(m,\rho^A_m)$
and $(n,\rho^A_n)$ is the object $m \oti n$ together with the \rep\ morphism
  \be
  \rho^A_{m \otimes n} := (\rho_m \oti \rho_n) \circ (\id_A \oti \cb_{A,m} \oti \id_n)
  \circ (\Delta_A \oti \id_m \oti \id_n) \,,
  \label{eq:rho_mn=rho_m.rho_n}
  \ee
with $\Delta_A$ the coproduct of $A$. Characters satisfy
  \be
  \chii^A_{m \otimes n} = (\chii^A_m \oti \chii^A_n) \circ \Delta_A \,,
  \label{eq:chii_mn=chii_m.chii_n}
  \ee
i.e.\ they are multiplicative with respect to the convolution product on $\HomC(A,\one)$
that is induced by the coalgebra structure of $A$ (and the algebra structure of $\one$).

In the case of the Hopf algebra $L \eq \Ze$, every object $m$ in \C\ turns out to have
a canonical structure of an $L$-module $(m,\rho^L_m)$. 
The action $\rho^L_m\colon L \oti m \To m$ of $L$ on $m$ is defined via a partial 
monodromy. More explicitly, $\rho^L_m$ can be expressed with the help of the
dinatural family of morphisms $\imath^L_x\colon x^\vee \oti x \To L$ that comes with
the coend structure of $L$, and of the braiding $\cb$ and the right evaluation $\ev$ 
of \C, as \Cite{Sect.\,2.2}{fuSs5}
  \be
  \rho^L_m \circ (\iL_x \oti \id_m) := (\ev_x \oti \id_m) \circ \big[
  \id_ {x^\vee_{}} \oti ( \cb_{m,x} \cir \cb_{x,m} ) \big]
  \label{eq:rhoLm}
  \ee
for all $x \iN \C$.
Pictorially, this action and the resulting character are given by
  \be
  \begin{tikzpicture}
  \braid[ line width=\piclinesize, height=0.8cm, width=0.5cm,
  style strands={1,2}{\xxcolor},
  style strands={3}{\locco}
  ] (braid) at (0,0) s_2^{-1} s_2^{-1};
  \drawRightEval {0.25} {0.25} {0.25} 0 \xxcolor ;
  \draw[very thick,color=\xxcolor] (0,-2.1) -- (0,-2.4) ;
  \draw[very thick,color=\xxcolor] (0.5,-2.1) -- (0.5,-2.4) ;
  \draw[very thick,color=\locco] (1.0,-2.1) -- (1.0,-2.4) ;
  \draw[very thick,color=\locco] (1.0,0) -- (1.0,0.7) ;
  \node (xv) at (0,-2.4-.21) {$\ssg x^\vee $} ;
  \node (x)  at (0.5,-2.4-.21) {$\ssg x $} ;
  \node (m)  at (1.0,-2.4-.21) {$\ssg m $} ;
  \node (lhs) at (-2.5,-0.9) {$ \rho_m^L \circ (\iL_x \oti \id_m) ~= $} ;
  \end{tikzpicture}
  \hspace*{4em}
  \begin{tikzpicture}
  \braid[ line width=\piclinesize, height=0.6cm, width=0.5cm,
  style strands={1,2}{\xxcolor},
  style strands={3,4}{\locco}            
  ] (braid) at (0,-0.2) s_2^{-1} s_2^{-1};
  \drawRightEval {0.25} {0.25} {0.25} 0 \xxcolor ;
  \drawLeftEval {1.25} {0.05} {0.25} 0 \locco ;
  \drawRightCoeval {1.25} {-1.9-.25} {0.25} 0 \locco ;
  \draw[very thick,color=\xxcolor] (0,0) -- (0,-0.2) ;
  \draw[very thick,color=\xxcolor] (0.5,0) -- (0.5,-0.2) ;
  \draw[very thick,color=\xxcolor] (0,-1.9) -- (0,-2.8) ;
  \draw[very thick,color=\xxcolor] (0.5,-1.9) -- (0.5,-2.8) ;
  \draw[very thick,color=\locco] (1.5,-0.2) -- (1.5,-1.9) ;
  \node (xv) at (0,-2.8-.21) {$\ssg x^\vee $} ;
  \node (x)  at (0.5,-2.8-.21) {$\ssg x $} ;
  \node (m)  at (1.21,-1.66) {$\ssg m $} ;
  \node (lhs) at (-1.8,-1.3) {$ \chii_m^L \circ \iL_x ~= $} ;
  \end{tikzpicture}
  \label{Qrep2} 
  \ee
Notice in particular that for semisimple \C\ the morphism $\chii_m^L \cir \iL_{x_i}$
coincides with the morphism on the right hand side of \erf{eq:openHopf2char}
and that in this case the monodromy appearing in \eqref{Qrep2} gives rise to the
S-matrix factor in the formula \eqref{|x_a>} for the boundary state.
Moreover, any family of characters of pairwise non-isomorphic simple $L$-modules $(m,\rho_m^L)$
is linearly independent in the vector space $\HomC(L,\one)$ \Cite{Thm.\,4.1}{shimi9}.

By \erf{eq:rho_mn=rho_m.rho_n} the $L$-modules $(m,\rho^L_m)$ form a full monoidal subcategory 
of the category $\C_L$ of all $L$-modules in \C. The latter category $\C_L$ is in fact 
braided equivalent to the Drinfeld center \ZC, which is best understood when formulated
\Cite{Thm.\,8.13}{brVi5} in terms of the central monad on \C\ (compare Appendix 
\ref{app:centralmonad}).

\medskip

Through the structure of $L$ as a coend, every object $m$ in \C\ is also canonically 
an $L$-\emph{comodule} $(m,\delta^L_m)$, with the coaction given by
  \be
  \delta^L_m := (\iL_{m^\vee_{}} \oti \id_m) \circ (\id_m \oti \tcoev_m)
  ~\in \HomC(m,L\oti m) \,,
  \ee
and with corresponding \emph{cocharacter}
  \be
  \cochi^L_m := (\id_L \oti \tev_m) \circ (\delta^L_m \oti \id_{m^\vee_{}}) \circ \coev_m
  = \imath^L_{m^\vee_{}} \circ \coev_m ~\in \HomC(\one,L) \,.
  \label{eq:cochi^L_m}
  \ee
Pictorially,  
  \be
  \begin{tikzpicture}
  \drawDinatWLegsLocco 0 0 {0.4} {0.3} {} {0.4}
  \draw[very thick,color=\Lcolor] (0,0) -- (0,1.0) ;
  \draw[very thick,color=\locco] (-0.2,-0.7) -- (-0.2,-1.4) ;
  \drawLeftCoeval {0.2+0.3} {-0.65-0.3} {0.3} 0 \locco
  \draw[very thick,color=\locco] (0.2+0.6,-0.65) -- (0.2+0.6,1.0) ;
  \node (iL) at (-0.69,0.03)  {$\sse \iL_{m^\vee_{}} $} ;
  \node (m)  at (-0.22,-1.61) {$\ssg m $} ;
  \node (lhs) at (-1.9,-0.2)  {$ \delta^L_m ~= $} ;
  \end{tikzpicture}
  \hspace*{4em}
  \begin{tikzpicture} 
  \draw[color=white] (0,-1.8) -- (0,-1.86) ;
  \drawDinatWLegsLocco 0 0 {0.6} {0.3} {} {0.5}
  \draw[very thick,color=\Lcolor] (0,0) -- (0,1.0) ;
  \drawRightCoeval {0} {-0.75-0.3} {0.3} 0 \locco
  \node (mv) at (0.63,-0.49)  {$\ssg m^\vee $} ;
  \node (lhs) at (-1.6,-0.2)  {$ \cochi^L_m ~= $} ;
  \node (nix) at (0,-0.81) {$ $};
  \end{tikzpicture}
  \label{eq:delta,cochi}
  \ee
The character and cocharacter of the trivial $L$-module are given by the counit and
unit of $L$, respectively: $\chii^L_\one \eq \epsL$ and $\cochi^L_\one \eq \etaL$.
Like characters, also cocharacters split under extensions, and they are multiplicative
under the tensor product of comodules, so that \Cite{Sect.\,4.5}{fuSc17}
  \be
  \cochi_{m\otimes n}^L = \mulL \circ (\cochi_m^L \oti \cochi_n^L) \,.
  \label{eq:cochi_mn=cochi_m.cochi_n}
  \ee

The action $\rho_m^L$ and coaction $\delta^L_m$ are connected by the Hopf pairing $\omegaL$
according to $(\omegaL \oti \id_m) \cir 
     $\linebreak[0]$
(\id_L \oti \delta^L_m) \eq \rho^L_m$. 
Recall from Definition \ref{def:modular} that bijectivity of the linear map 
  \be
  \bearll
  \Om : & \HomC(\one,L) \xarr{~~} \HomC(L,\one)
  \Nxl2 & \hspace*{4.2em}
  \tild\alpha \,\xmapsto{~~~}\, \omegaL \cir (\id_L \oti \tild\alpha)
  \eear
  \label{eq:defOm}
  \ee
is one of the equivalent definitions of modularity. Thus for modular \C\ the relation
between the action and coaction can be inverted. In
particular, for the characters and cocharacters we have
  \be
  \chii^L_m = \Om(\cochi^L_m) \qquand \cochi^L_m = \Omm(\chii^L_m) \,. 
  \label{char-cochar}
  \ee

\medskip

The algebra structure of $L$ and the (trivial) coalgebra structure of $\one$ supply a 
\emph{convolution product} on $\HomC(\one,L)$:
  \be
  \tild\alpha \cvp \tild\beta := \mul_L \circ (\tild\alpha \oti \tild\beta)
  \label{tildalpha.tildbeta}
  \ee
for $\tild\alpha,\tild\beta \iN \HomC(\one,L)$. Analogously, the coalgebra structures 
of $L$ and $U(F)$ supply two convolution products on $\HomC(L,\one)$:
  \be
  \alpha \cvpL \beta := (\alpha \oti \beta) \circ \DeltaL \qquad{\rm and}\qquad
  \alpha \cvpF \beta := (\alpha \oti \beta) \circ \Delta_F 
  \ee
for $\alpha,\beta \iN \HomC(L,\one)$.
The multiplicativity relations \erf {eq:chii_mn=chii_m.chii_n}
and \erf{eq:cochi_mn=cochi_m.cochi_n} for the characters and cocharacters tell us
that the (co-)\,character maps respect these products, 
  \be
  \chii^L_{m \otimes n} = \chii_m^L \cvpL \chii_n^L  \qquand
  \cochi_{m\otimes n}^L = \cochi_m^L \cvp \cochi_n^L \,.
  \label{eq:mn=m*n}
  \ee
Since they also split under extensions, we have 

\begin{lem}\label{lem:ringhomom}
The characters $\chii^L_m$ and $\cochi^L_m$ define ring homomorphisms
  \be
  \chii^L:\quad K_0(\C) \xarr{~} \HomC(L,\one) \qquand
  \cochi^L:\quad K_0(\C) \xarr{~} \HomC(\one,L) \,,
  \ee
respectively.
\end{lem}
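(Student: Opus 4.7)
The plan is to verify, for each of the two maps $\chii^L$ and $\cochi^L$, three standard properties: (i) that the map descends from isomorphism classes of objects of $\C$ to the Grothendieck ring $K_0(\C)$, (ii) that it converts the tensor product of $\C$ into the appropriate convolution product on its target, and (iii) that it sends the class $[\one]$ to the multiplicative unit of the target ring. The whole proof amounts to organizing results that are already at hand in Section \ref{sec:chii-cochi}.

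For (i), recall that $K_0(\C)$ is presented as the abelian group on isomorphism classes $[m]$ modulo the relations $[m] \eq [m']\,{+}\,[m'']$ coming from short exact sequences $0 \To m' \To m \To m'' \To 0$, with multiplication induced by the tensor product. The additivity \erf{eq:split} for characters, together with its cocharacter analogue recorded just above \erf{eq:cochi_mn=cochi_m.cochi_n}, then directly shows that both maps factor through $K_0(\C)$ as group homomorphisms into $\HomC(L,\one)$ and $\HomC(\one,L)$, respectively.

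For (ii), I would first observe that the convolution products $\cvpL$ on $\HomC(L,\one)$ and $\cvp$ on $\HomC(\one,L)$ are associative and unital, with units $\epsL$ and $\etaL$ respectively; this is an immediate consequence of the coassociativity and counit axioms of $L$ and, dually, of its associativity and unit axioms. Multiplicativity of the two maps is then exactly the content of the identities \erf{eq:mn=m*n}, which in turn rest, for characters, on the tensor product formula \erf{eq:rho_mn=rho_m.rho_n} for the $L$-action on a tensor product (built from $\DeltaL$), and, dually, for cocharacters on the coaction on a tensor product of comodules (built from $\mulL$).

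For (iii), I would read off $\chii^L_\one$ and $\cochi^L_\one$ directly from the defining pictures \erf{Qrep2} and \erf{eq:delta,cochi}. For the character, triviality of $\cb_{\one,x}\cir\cb_{x,\one}$ combined with the relation $\epsL \cir \iL_x \eq \ev_x$ from \erf{eq:L-morphisms} yields $\chii^L_\one \cir \iL_x \eq \epsL \cir \iL_x$ for every $x \iN \C$, whence $\chii^L_\one \eq \epsL$ by dinaturality of $\iL$. For the cocharacter, specializing \erf{eq:cochi^L_m} at $m \eq \one$ gives $\cochi^L_\one \eq \iL_\one \eq \etaL$. Since every ingredient is already in place, there is no substantial obstacle; the only point that demands care is keeping the two convolution structures on the two target spaces correctly paired with, respectively, the coalgebra and algebra structures of $L$, so that the unit elements match the images of $[\one]$.
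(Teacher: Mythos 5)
Your proposal is correct and follows essentially the same route as the paper, which deduces the lemma from the additivity of (co)characters under extensions \erf{eq:split} together with the multiplicativity relations \erf{eq:mn=m*n} with respect to the convolution products $\cvpL$ and $\cvp$. Your explicit verification that $[\one]$ is sent to the convolution units $\epsL$ and $\etaL$ is a point the paper records just before the lemma rather than inside a proof, so nothing is missing.
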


\begin{rem}
There are \emph{two} distinguished pairs of isomorphisms between the morphism spaces
$\HomC(L,\one)$ and $\HomC(\one,L)$: composition with the \emph{Hopf} pairing $\omegaL$ as
in \erf{eq:defOm} and its side inverse $\omegaLm$ on the one hand, and composition with the
\emph{Frobenius} form \erf{eq:omegaF} and its side inverse $\omegaFm$ on the other hand.
The defining property of the Hopf pairing is equivalent to the first of these pairs of
isomorphisms being intertwiners between the convolution product on $\HomC(\one,L)$ and
the convolution product $\cvpL$ on $\HomC(L,\one)$.
\end{rem}

\begin{rem}
Composing a cocharacter with a character gives a Hopf link morphism. Indeed, by direct
calculation we have
  \be
  \chii^L_y \circ \cochi^L_x
  \stackrel{\erf{char-cochar}}= \omegaL \circ (\cochi^L_x \oti \cochi^L_y) = \soo_{x^\vee,y} \,,
  \label{eq:chii.cochi=soo}
  \ee
with the \emph{Hopf link morphisms} $\soo$ defined as 
  \def\locph  {1.0}  
  \def\locpw  {1.0}  
  \def\locpW  {0.5}  
  \be
  \bearll
  \soo_{x,y} \!\! &:= (\ev_x \oti \tev_y ) \circ \big( \id_{x^\vee} \oti
  [ \cc_{y,x} \cir \cc_{x,y} ] \oti \id_{y^\vee} \big) \circ (\tcoev_x \oti \coev_y) \,.
  \\[49pt] & \,= \hsp{9.9} \in\, \EndC(\one) \,.
  \\[-55pt] & \hsp{2.5}
  \begin{tikzpicture}
  \braid[ line width=\piclinesize, height=0.8cm,
     style strands={1}{red},
     style strands={2}{red},
     style strands={3}{blue}
  ] (braid) at (0,0) s_2 s_2;
  \draw[very thick,color=blue] (3*\locpw,-2*\locph) -- (3*\locpw,0);
  \draw[very thick,color=red] (0,-2*\locph) arc (180:360:\locpW cm);
  \draw[very thick,color=red,->] (0,-2*\locph) arc (180:277:\locpW cm);
  \draw[very thick,color=red] (0,0) arc (180:0:\locpW cm);
  \draw[very thick,color=red,->] (\locpw,0) arc (0:97:\locpW cm);
  \draw[very thick,color=blue] (2*\locpw,-2*\locph) arc (180:360:\locpW cm);
  \draw[very thick,color=blue,->] (3*\locpw,-2*\locph) arc (360:263:\locpW cm);
  \draw[very thick,color=blue] (2*\locpw,0) arc (180:0:\locpW cm);
  \draw[very thick,color=blue,->] (2*\locpw,0) arc (180:83:\locpW cm);
  \node (x) at (\locpw+0.18,-2*\locph) {$\scs x $};
  \node (y) at (2*\locpw-0.21,-2*\locph) {$\scs y $};
  \end{tikzpicture}
  \eear
  \label{eq:def:soo}
  \ee
Using the naturality of the braiding so as to deform the strands in the
picture \erf{eq:def:soo} suitably one shows that
  $  
  \soo_{x,y} \eq \sooi_{y^\vee,x} \eq \soo_{y^\vee,x^\vee}
  $, 
where $\sooi$ is defined analogously as $\soo$, but with the monodromy
$\cc_{y,x} \cir \cc_{x,y}$ replaced by its inverse.

If \C\ is semisimple, then the matrix $(\soo_{i,j})_{i,j\in I}$ is non-degenerate --
it is then just the (unnormalized) modular S-matrix.
In contrast, for non-semisimple \C\ the pairing between the spaces of characters and
cocharacters furnished by $\soo$ is degenerate. Indeed, for any projective object 
$p \iN \C$ and any $x\iN \C$ the morphism $\soo_{p,x} \iN \EndC(\one) \eq \ko$
factorizes through the projective object $p^\vee \oti p \oti x \oti x^\vee$
and can therefore only be non-zero if the tensor unit $\one$ is projective
which, in turn, is equivalent to \C\ being semisimple.
\end{rem}


\subsection{The modular S-transformation}

If \C\ is a semisimple modular tensor category, then the modular S-transformation of
vertex algebra characters is described, up to normalization, by the modular S-matrix 
$(\soo_{i,j})_{i,j\in I}$ (see e.g.\ \cite{huan20}). In contrast, in the non-semisimple
case this is no longer the case, as is indicated by the degeneracy just mentioned.
However, it is known from \cite{lyub6} that morphism spaces in \C\ of the form
$\HomC(x,L)$ or $\HomC(L,y)$ with $x,y\iN\C$ admit a natural projective action of the 
modular group $\mathrm{SL}(2,\zet)$ by post- or precomposition, respectively, with
endomorphisms of the object $L$. (This generalizes to projective actions of mapping
class groups at any genus $g$ for morphisms involving the object $L^{\otimes g}$.)
This provides us in particular with an $\mathrm{SL}(2,\zet)$-action on the spaces
$\HomC(\one,L)$ or $\HomC(L,\one)$ which contain the cocharacters and characters of
$L$. Specifically, for the modular S-transformation the endomorphism of $L$ in
question is \cite{lyub6}
  \be
  S_L := (\epsL \oti \id_L) \circ \mathcal Q_L \circ (\id_L \oti \LambdaL)
  \ee
with $\mathcal Q_L$ the endomorphism of $L \oti L$ defined by 
  \be
  \mathcal Q_L \circ (\iL_x \oti \iL_y) := (\iL_x \oti \iL_y) \circ \big( \id_{x^\vee}
  \oti [ \cb_{y^\vee,x} \cir \cb_{x,y^\vee} ] \oti \id_y \big) \,.
  \ee
Moreover, the integral $\LambdaL$ can be normalized such that
this morphism squares to the inverse antipode \Cite{Thm\,2.1.9}{lyub6},
  \be
  {(S_L)}^2 = \apoL^{-1} .
  \label{eq:S.S=apoinv}
  \ee
In the sequel we assume that this normalization has been chosen (this determines
$\LambdaL$ up to a sign).

In the case that $\C \eq H\Mod$ is the representation category of a factorizable
ribbon Hopf algebra, $S_L$ is the composition of the Drinfeld and Frobenius
maps of $H$ (see e.g.\ \cite{soZh}). This generalizes as follows:

\begin{lem}
The morphism $S_L$ can be expressed as
  \be
  S_L = (\omegaF \oti \id_L) \circ (\id_L \oti \omegaLm) 
  \label{eq:S=omegaF.omegaLm}
  \ee
in terms of the Hopf
pairing $\omegaL$ \erf{eq:hopa} and of the Frobenius form $\omegaF$ \erf{eq:omegaF}.
\end{lem}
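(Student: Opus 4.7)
The plan is to verify the identity $S_L \eq (\omegaF \oti \id_L) \cir (\id_L \oti \omegaLm)$ by unpacking both sides via their defining formulas and showing they coincide. This is the categorical analogue of the Hopf-algebra statement (Sommerh\"auser--Zhu) factoring $S_L$ as the composition of the Drinfeld map with the Frobenius map, and I would expect the proof to mirror that one at the level of string diagrams.

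For the right-hand side, I substitute $\omegaF \eq \lambdaL \cir \mulL$ from \erf{eq:omegaF} to obtain $(\lambdaL \oti \id_L) \cir (\mulL \oti \id_L) \cir (\id_L \oti \omegaLm)$. The Hopf-pairing axiom $\omegaL \cir (\mulL \oti \id_L) \eq \omegaL \cir [\id_L \oti ((\omegaL \oti \id_L) \cir (\id_L \oti \DeltaL))]$ is, under the side-inverse relation $(\omegaL \oti \id_L) \cir (\id_L \oti \omegaLm) \eq \id_L$, dual to an identity that rewrites $(\mulL \oti \id_L) \cir (\id_L \oti \omegaLm)$ in terms of $\omegaLm$, the coproduct $\DeltaL$, and the antipode $\apoL$; this brings the RHS into a form close to $\omegaFm$ as given in \erf{eq:omegaFm}.

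For the left-hand side, I use the identity $\omegaL \eq (\epsL \oti \epsL) \cir \mathcal{Q}_L$, which is immediate by comparing the defining formulas of $\omegaL$ and $\mathcal{Q}_L$ on the dinatural family $\iL_x \oti \iL_y$ and using $\epsL \cir \iL_x \eq \ev_x$. This characterizes $(\epsL \oti \id_L) \cir \mathcal{Q}_L$ as a ``half-pairing'' morphism $L \oti L \To L$ that, once composed with the integral $\LambdaL$ in its right slot, yields $S_L$. Combining this with the explicit formula for $\omegaFm$ in \erf{eq:omegaFm} and the normalization $\lambdaL \cir \LambdaL \eq 1$ from \erf{lambdaL.LambdaL=1} allows $S_L$ to be massaged into the same canonical form as the rewritten RHS.

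The main obstacle is the apparent asymmetry between the two presentations: the LHS carries the integral $\LambdaL$ while the RHS carries the Hopf-pairing inverse $\omegaLm$, and neither admits a clean factorization through the dinatural family $\iL_x$ alone, so one cannot simply compare values on the generating coend maps. The essential bridge is modularity of $\C$ -- which guarantees non-degeneracy of $\omegaL$, hence existence and uniqueness of $\omegaLm$ -- together with unimodularity of $L$, ensuring left and right integrals coincide and that $\apoL^2$ is an inner automorphism. With these two inputs in place, the detailed verification is best carried out in the graphical calculus, by propagating the monodromy built into $\mathcal{Q}_L$ through the string diagram on the LHS until it matches the pair of intertwined coupons representing $\omegaF$ and $\omegaLm$ on the RHS.
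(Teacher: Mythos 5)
Your proposal assembles the right ingredients but never actually closes the argument, and the place where it stalls is exactly where the paper's proof has its one essential idea. The paper does \emph{not} compare the two sides of \erf{eq:S=omegaF.omegaLm} directly. Instead it first proves the key identity
$(\omegaL \oti \id_L) \cir (\id_L \oti \DeltaL) \eq (\epsL \oti \id_L) \cir \mathcal Q_L$
as morphisms $L \oti L \To L$, by evaluating both sides on the dinatural family $\iL_x \oti \iL_y$ (each side becomes the double braiding $\cb_{y^\vee,x}\cir\cb_{x,y^\vee}$ capped off appropriately) and invoking dinaturality. Your observation that $\omegaL \eq (\epsL\oti\epsL)\cir\mathcal Q_L$ is a correct but strictly weaker shadow of this: you remove \emph{both} counits, which loses the $L$-valued output needed to connect to $S_L \eq (\epsL\oti\id_L)\cir\mathcal Q_L\cir(\id_L\oti\LambdaL)$.

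The second missing idea is strategic. You correctly note that $\omegaLm$ has no explicit presentation on the generating coend maps, and you flag this as "the main obstacle" -- but then you do not resolve it. The paper's resolution is to avoid $\omegaLm$ entirely: pre-compose the key identity with $\id_L\oti\LambdaL$ and post-compose with $\apoL$, so that the right-hand side becomes $\apoL\cir S_L$ while the left-hand side becomes $(\omegaL\oti\id_L)\cir(\id_L\oti\omegaFm)$ via the explicit formula $\omegaFm \eq (\id_L\oti\apoL)\cir\DeltaL\cir\LambdaL$ of \erf{eq:omegaFm}. Then $\apoL\cir S_L \eq S_L^{-1}$ by the normalization \erf{eq:S.S=apoinv} (which your proposal never uses), and the stated formula follows by taking side inverses of both pairings. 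Without the $L\oti L\To L$ identity and without \erf{eq:S.S=apoinv}, the "massaging" and "propagating the monodromy" you describe have no concrete realization, so as written the proof is incomplete.
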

   
\begin{proof}
By direct calculation one sees that
  \be
  \bearll
  (\omegaL \oti \id_L) \circ (\id_L \oti \DeltaL) \circ (\iL_x \oti \iL_y) \!\! &
  = (\coev_x \oti \iL_y) \circ \big( \id_{x^\vee} \oti
  [ \cb_{y^\vee,x} \cir \cb_{x,y^\vee} ] \oti \id_y \big) \,.
  \Nxl3 &
  = (\epsL \oti \id_L) \circ \mathcal Q_L \circ (\iL_x \oti \iL_y) \,,
  \eear
  \label{eq00267}
  \ee
Here the first equality follows by combining the definitions of $\DeltaL$ and $\omegaL$ 
and the second by combining those of $\epsL$ and $\mathcal Q_L$.
By dinaturality, \erf{eq00267} implies that
$ (\omegaL \oti \id_L) \cir (\id_L \oti \DeltaL) \eq (\epsL \oti \id_L) \,{\circ} 
  $\linebreak[0]$
  \mathcal Q_L $. 
Pre-composing this equality with $\id_L \oti \LambdaL$ and post-composing with
the antipode gives, by recalling the expression for $\omegaFm$,
  \be
  (\omegaL \oti \id_L) \circ (\id_L \oti \omegaFm)
  = (\eps_L \oti \apoL) \circ \mathcal Q_L \circ (\id_L \oti \LambdaL)
  = \apoL \circ S_L \stackrel{\erf{eq:S.S=apoinv}}= S_L^{-1} \,.
  \ee
Taking inverses, we finally arrive at the claimed formula \erf{eq:S=omegaF.omegaLm}.
\end{proof}

We also have (see e.g.\ \Cite{Lemma\,5.2.4}{KEly})
 \be
  \omegaL \circ \cb_{L,L} = \omegaL \circ (\apoL^{-1} \oti \apoL^{-1})
  \ee
or, equivalently,
  \be
  \cb_{L,L} \circ \omegaLm = (\apo^{}_L \oti \apo^{}_L) \circ \omegaLm .
  \ee
When combined with \erf{omegaF.cLL} and \erf{eq:S=omegaF.omegaLm} it then follows by direct 
calculation
that the formulas relating the S-automorphism to the Hopf pairing and Frobenius form
are left-right-symmetric:

\begin{lem}
The S-endomorphism of $L$ and its inverse obey
  \be
  (\id_L \oti \omegaF) \circ ( \omegaLm \oti \id_L)
  = S_L^{} = (\omegaF \oti \id_L) \circ (\id_L \oti \omegaLm) 
  \label{eq:..=SL=..}
  \ee
and
  \be
  (\id_L \oti \omegaL) \circ ( \omegaFm \oti \id_L)
  = S_L^{-1} = (\omegaL \oti \id_L) \circ (\id_L \oti \omegaFm) \,.
  \ee
\end{lem}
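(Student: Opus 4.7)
The rightmost equality of \erf{eq:..=SL=..} is precisely the preceding lemma. My plan for the remaining three equalities is to mirror the argument for that lemma on the opposite side and then invert.

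I would first establish the mirror of \erf{eq00267},
\[
(\id_L \oti \omegaL) \cir (\DeltaL \oti \id_L) = (\id_L \oti \epsL) \cir \mathcal Q_L,
\]
as morphisms $L \oti L \To L$; this follows either from the analogous braid identity on the dinatural generators $\iL_x \oti \iL_y$, or by conjugating \erf{eq00267} with $\cb_{L,L}$ and invoking that $\mathcal Q_L = \cb_{L,L}^{\,2}$ is central under the braiding while $\epsL$ absorbs the braiding via naturality. Alongside, I would verify the alternative description
\[
S_L = (\id_L \oti \epsL) \cir \mathcal Q_L \cir (\LambdaL \oti \id_L),
\]
which follows from the original definition of $S_L$ by combining naturality of the braiding with respect to $\LambdaL\colon \one \To L$ (giving $\cb_{L,L} \cir (\id_L \oti \LambdaL) = \LambdaL \oti \id_L$), centrality of $\mathcal Q_L$ under $\cb_{L,L}$, and naturality of $\epsL\colon L \To \one$.

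I would then mimic the second half of the preceding lemma in mirrored form: pre-compose the displayed mirror identity with $\LambdaL \oti \id_L$ and post-compose with $\apoL$, obtaining
\[
\apoL \cir (\id_L \oti \omegaL) \cir ((\DeltaL \cir \LambdaL) \oti \id_L) = \apoL \cir S_L = S_L^{-1}
\]
(the last equality is \erf{eq:S.S=apoinv}). Rewriting the LHS as $(\id_L \oti \omegaL) \cir (\omegaFm \oti \id_L)$ by migrating the outer $\apoL$ into the integrand --- using $\omegaFm = (\id_L \oti \apoL) \cir \DeltaL \cir \LambdaL$, the antipode symmetry $\omegaL \cir (\apoL \oti \apoL) = \omegaL$ of the Hopf pairing (which is equivalent to $(\apoL \oti \apoL) \cir \omegaLm = \omegaLm$ and available because $L$ is a Hopf algebra with invertible antipode in a ribbon category), the unimodular identity $\apoL \cir \LambdaL = \LambdaL$, and the anti-coalgebra law $\DeltaL \cir \apoL = (\apoL \oti \apoL) \cir \cb_{L,L}^{-1} \cir \DeltaL$ --- establishes the leftmost equality of the second displayed line.

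Finally, inverting this identity and using the side-inverse relations for $\omegaL$ and $\omegaF$ to identify $[(\id_L \oti \omegaL) \cir (\omegaFm \oti \id_L)]^{-1} = (\id_L \oti \omegaF) \cir (\omegaLm \oti \id_L)$ yields the leftmost equality of \erf{eq:..=SL=..}; the rightmost equality of the second line follows symmetrically by inverting the formula of the preceding lemma. The main obstacle I anticipate is the migration of the outer $\apoL$ in the middle step: the antipode factors introduced by $\omegaFm$ (which places $\apoL$ on only the right factor of $\DeltaL \cir \LambdaL$), by the anti-coalgebra law, and by the Hopf pairing's symmetry under double antipode must be combined in exactly the right order, and any slip would leave a residual $\apoL^{\pm 2}$ that a priori need not simplify away.
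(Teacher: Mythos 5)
Your identification of the rightmost equality of \erf{eq:..=SL=..} with the preceding lemma is correct, and the rightmost equality of the second display is indeed just its inverse (it already appears as the intermediate step $(\omegaL\oti\id_L)\cir(\id_L\oti\omegaFm)\eq S_L^{-1}$ in that lemma's proof). For the genuinely new content --- the two left-handed formulas --- the paper does not re-run the $\mathcal Q_L$-computation in mirror image. Instead it records, immediately before the lemma, the identities $\omegaL\cir\cb_{L,L}\eq\omegaL\cir(\apoL^{-1}\oti\apoL^{-1})$ (equivalently $\cb_{L,L}\cir\omegaLm\eq(\apoL\oti\apoL)\cir\omegaLm$) together with \erf{omegaF.cLL}, i.e.\ $\omegaF\cir\cb_{L,L}\eq\omegaF\cir(\id_L\oti\apoL^{-2})$, and then obtains the left-handed expressions from the already established right-handed ones by a short direct calculation: conjugating $(\omegaF\oti\id_L)\cir(\id_L\oti\omegaLm)$ by the braiding, the $\apoL^{-2}$ produced by the Frobenius form cancels against the two antipodes produced by the Hopf pairing. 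This is both shorter and avoids touching $\mathcal Q_L$ again.

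Your mirroring strategy has a concrete unproven step at its core: the claimed ``centrality of $\mathcal Q_L$ under $\cb_{L,L}$''. As a statement about braids this is false: $\mathcal Q_L\cir(\iL_x\oti\iL_y)$ is the monodromy of the second and third of the four strands $x^\vee,x,y^\vee,y$, and conjugating by the block crossing $\cb_{x^\vee\otimes x,\,y^\vee\otimes y}$ turns this into the monodromy of the first and fourth strands, which is a different element of the pure braid group. To repair it one must invoke dinaturality of $\iL_x$ and $\iL_y$ to partially transpose the monodromy back onto the middle strands (each transposition also inverting the monodromy, as in the identity $\soo_{x,y}\eq\soo_{y^\vee,x^\vee}$), i.e.\ one must prove essentially the same left-right symmetry the lemma is after, so the argument as written is close to circular. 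The same issue affects your ``mirror of \erf{eq00267}'' and the mirrored formula for $S_L$: the defining data $\DeltaL$, $\omegaL$ and $\mathcal Q_L$ are not manifestly left-right symmetric, so these mirrors are not ``analogous braid identities'' but require exactly the antipode bookkeeping you yourself flag as the danger point. I would recommend abandoning the mirroring and instead deriving the left-handed formulas from the right-handed ones via the two braiding-symmetry identities of $\omegaF$ and $\omegaL$; with them the residual $\apoL^{\pm1}$ factors cancel on the nose, which is precisely the content of the lemma.
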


Pictorially,
  \def\locpa  {0.33} 
  \def\locpb  {0.76} 
  \def\locph  {2.7}  
  \def\locpx  {0}    
  \def\locpu  {3.2}  
  \def\locpv  {0.81} 
  \def\locpX  {0.86} 
  \def\locpy  {0}    
  \def\locpY  {1.8}  
  \be
  \begin{tikzpicture}
  \draw[very thick,color=\Lcolor] (\locpa-\locpb,\locpy) -- +(0,\locph);
  \draw[very thick,color=\Lcolor] (\locpb-\locpa,\locpy) -- +(0,\locpY);
  \draw[very thick,color=\Lcolor] (\locpX+\locpb-\locpa,\locpY) -- +(0,-\locph);
  \drawomegaF  \locpX \locpY
  \drawomegaLm \locpx \locpy
  \node at (\locpu,\locpv) {$=~\,S_L^{}\,~=$};
  \begin{scope}[shift={(5.55,0)}]
  \draw[very thick,color=\Lcolor] (\locpa-\locpb,\locpY) -- +(0,-\locph);
  \draw[very thick,color=\Lcolor] (\locpb-\locpa,\locpy) -- +(0,\locpY);
  \draw[very thick,color=\Lcolor] (\locpX+\locpb-\locpa,\locpy) -- +(0,\locph);
  \drawomegaF  \locpx \locpY
  \drawomegaLm \locpX \locpy
  \end{scope}
  \end{tikzpicture}
  \ee
and  
  \def\locpu  {3.36} 
  \be
  \begin{tikzpicture}
  \draw[very thick,color=\Lcolor] (\locpa-\locpb,\locpy) -- +(0,\locph);
  \draw[very thick,color=\Lcolor] (\locpb-\locpa,\locpy) -- +(0,\locpY);
  \draw[very thick,color=\Lcolor] (\locpX+\locpb-\locpa,\locpY) -- +(0,-\locph);
  \drawomegaL  \locpX \locpY
  \drawomegaFm \locpx \locpy
  \node at (\locpu,\locpv) {$=~\,S_L^{-1}\,~=$};
  \begin{scope}[shift={(5.92,0)}]
  \draw[very thick,color=\Lcolor] (\locpa-\locpb,\locpY) -- +(0,-\locph);
  \draw[very thick,color=\Lcolor] (\locpb-\locpa,\locpy) -- +(0,\locpY);
  \draw[very thick,color=\Lcolor] (\locpX+\locpb-\locpa,\locpy) -- +(0,\locph);
  \drawomegaL  \locpx \locpY
  \drawomegaFm \locpX \locpy
  \end{scope}
  \end{tikzpicture}
  \ee

Combining this result with the relation \erf{eq:S.S=apoinv} between $S_L$ and the antipode,
one can derive the following formulas for the inverse of the Hopf pairing:
  \be
  \bearll
  \omegaLm \!\!\! & = (\id_L \oti \omegaL \oti \id_L)
  \circ (\id_L \oti \apoL^{-1} \oti \id_L \oti \id_L) \circ (\omegaFm \oti \omegaFm)
  \Nxl3 & 
  = (\id_L \oti \id_L \oti \omegaL)
  \circ \big( \id_L \oti [\DeltaL\cir\LambdaL] \oti \apoL \big) \circ \DeltaL \circ \LambdaL
  \Nxl3 & 
  = (\id_L \oti \apoL \oti \omegaL)
  \circ \big( \id_L \oti [\DeltaL\cir\LambdaL] \oti \id_L \big) \circ \DeltaL \circ \LambdaL \,.
  \eear
  \ee

Let us also mention that
it follows directly from the defining properties of a Hopf pairing and the non-degeneracy
of $\omegaL$ that if $\Lambda$ is a right integral of $L$, then $\Om(\Lambda)$ is a 
left cointegral, and vice versa, and analogously for left integrals and right cointegrals
(see e.g.\ \Cite{Thm.\,5}{kerl5}).
We can then fix the relative normalization of $\LambdaL$ and $\lambdaL$ in such a way that
  \be
  \lambdaL = \Om(\LambdaL) \circ \apoL^{-1}
  \ee
or, equivalently,
  \be
  \lambdaL = \OmT(\LambdaL) 
  \ee
with the linear map
$\OmT\colon \HomC(\one,L) \To \HomC(L,\one)$ being defined, similarly as $\Om$ in
\erf{eq:defOm}, by $\OmT(\tild\alpha) \,{:=}\, \omegaL \cir (\tild\alpha \oti \id_L)$.

Finally note that
  \be
  \chii_y \circ S_L \circ \cochi_x = \omegaF \circ (\cochi_x \oti \cochi_y) \,,
  \label{eq:chi.S.cochi}
  \ee
which may be compared to \erf{eq:chii.cochi=soo}.

\begin{rem}\label{rem:chiv}
It is tempting to interpret the (co)characters $\chii$ and $\cochi$ in the context of
$C_2$-co\-fi\-nite vertex operator algebras. To do so, note that the appropriate notion
of character
for a module $M$ over a vertex operator algebra \V\ is as a chiral genus-1 one-point function 
  \be
  \chiv_M(v,\tau) = \mathrm{Tr}_M^{}\, o_M^{}(v)\, q^{L_0-c/24} ,
  \label{eq:chiv}
  \ee
with $v\iN \V$ and $o_M(v)$ the grade-preserving endomorphism of $M$ induced by $v$ (rather
than as a Virasoro-specialized character $\chiv_M(\tau)$, which is obtained when taking
$v$ to be the vacuum vector; those are typically not linearly independent).
In conformal field theory terms, these characters span a subspace of the space of one-point 
blocks on the torus with insertion the tensor unit; if the \rep\ category $\V\text{-Rep}$
is non-semisimple, then this is a proper subspace, with a (non-canonical) complement given 
by \cite{miya8} certain \emph{pseudo-characters}. 
(While the characters are power series in $q$, the pseudo-characters involve
extra factors of $\tau \eq \ln q/2\pi\ii$; see e.g.\ \Cite{Eq.\,A.2}{gaTi} for
explicit expressions for the logarithmic $(p,1)$ triplet models.)
In our setting, this space is the morphism
space $\Hom_{\V\text{-Rep}}(\one,L)$ (or the isomorphic space $\Hom_{\V\text{-Rep}}(L,\one)$,
but as we will see below, the former description is more appropriate).
Now as an object of the category $\C \eq\V\text{-Rep}$, $M$ has a natural structure of
$L$-module and $L$-comodule, so that we can consider its character $\chii^L_M$ and
cocharacter $\cochi^L_M$ as in \erf{Qrep2} and \erf{eq:delta,cochi}. Thinking of these
as categorical variants of the characters \erf{eq:chiv} fits perfectly with the fact
that the morphisms $\chii^L_M$ and $\cochi^L_M$, for $M$ the simple \V-mo\-du\-les, are
linearly independent and span subspaces of $\HomC(L,\one)$ and $\HomC(\one,L)$,
respectively. Likewise it fits with the observation that, at least for $\C \eq H\Mod$,
the torus partition function can be expanded as a bilinear form in the characters $\chii^L_m$
(with coefficients that are given by the Cartan matrix of \C) \cite{fuSs4}.
\\[2pt]
Of course, once we have switched to the categorical setting, the dependence on the
parameter $\tau$ (the modulus of the torus) is no longer visible. Now it follows directly
from the definition of $T_L$ that the cocharacter $\cochi^L_M$ satisfies 
  \be
  T_L \circ \cochi^L_M
  = \iL_{M^\vee_{}} \circ (\theta_M^{} \oti \id_{M^\vee_{}}) \circ \coev_{M^\vee_{}}
  \ee
with $\theta_M$ the twist isomorphism of $M$, so that acting with $T_L$ on a simple 
cocharacter just multiplies it with its twist eigenvalue.
This indicates that it is $\cochi^L_M$ that more directly corresponds to $\chiv_M(v,\tau)$,
while the character $\chii^L_M$ rather corresponds to $\chiv_M(v,-1/\tau)$. This
interpretation is corroborated by the fact that $L$ can be regarded as the dual of a
more fundamental algebraic structure (compare Example \ref{X:1}(iii)), as well as by
the following observation: Combining the relation \eqref{char-cochar} between characters
and cocharacters with the formula \erf{eq:..=SL=..} for $S_L$, we can write
$\cochi^L_m \eq (\id_L \oti (\chii^L_m \cir S_L)) \cir \omegaFm$. Now the morphisms
$\omegaF$ and $\omegaFm$ constitute a possible choice for the (right, say) evaluation
and coevaluation morphisms $\ev_L$ and $\coev_L$. With this choice, the previous equality
can be rewritten as
  \be
  \chii^L_m \circ S_L = \big(\cochi^L_m\big)^\vee_{} .
  \label{eq:in-out-dual}
  \ee
Thus in short, up to a duality the characters and cocharacters of our interest
are related by a modular S-trans\-for\-ma\-tion.
\end{rem}


\section{Boundary states}\label{sec:bstates}

We henceforth assume that the category of boundary conditions is given by the
category \C\ of chiral sectors. Besides being a natural choice, another
attractive feature of \C\ as a module category over itself is that it is an 
exact module category, whereby it comes with a lot of additional structure, 
such as relative Serre functors \cite{fScS2}.

A boundary state assigns to a boundary condition an element of a vector space of
conformal blocks; the relevant space of conformal blocks will be described in Section
\ref{sec:bdyblocks}. As explained in Section \ref{sec:intro}, this map constitutes a
decategorification and therefore should factorize over the fusion ring, i.e.\
the Grothendieck ring of \C. As also explained there, by comparison with rational 
CFT this is achieved by realizing boundary states as characters of
the $L$-modules with the canonical $L$-action \erf{eq:rhoLm}.
In this section we discuss this issue in more detail, accounting in particular also
for the need to distinguish between incoming and outgoing bulk field insertions.
Later, in Section \ref{sec:annulus}, we will show that it also gives rise to
sensible annulus partition functions.

\subsection{Blocks for boundary states}\label{sec:bdyblocks}

We first establish that the conformal blocks for boundary states are indeed 
(canonically isomorphic to) the vector space $\EndId$. This requires some explanation,
because conformal blocks for non-semisimple conformal field theories on world sheets with
boundary have so far not been discussed systematically.

Pertinent calculations in the semisimple case have been performed in \cite{fffs3,fuRs4}. 
They made use of the fact that the complex double of
a disk is a two-sphere. Accordingly, to describe a disk with one bulk field insertion, 
without loss of generality it was possible to work with two-point blocks on the sphere
with chiral insertions given by simple objects which, in turn, can be viewed as conformal 
blocks for a three-dimensional topological field theory of Reshetikhin-Turaev type.
In contrast, for non-semisimple \C\ the Cardy bulk algebra $F$ does not decompose into 
a direct sum of products of left and right movers any longer, so that we should keep $F$
as a single insertion.  We therefore consider a different geometry: we directly
work with a disk with a single puncture in its interior, and this puncture is labeled by
the entire bulk algebra $F \iN \Z(\C)$. When doing so, then unlike in \cite{fffs3,fuRs4} 
the relevant conformal blocks are those of a topological field theory of Turaev-Viro type;
our formulation is made possible by recent insight which allows one to
consider such theories also for surfaces with boundary, and in particular for a disk. 

The vector space of conformal blocks for boundary states can thus be obtained as follows. 
First, by the results of \cite{fusV} a boundary condition for the three-dimensional 
topological field theory associated with \C\ can be characterized by a central functor
$\Z(\C) \To \W$ from the category $\Z(\C)$ of bulk Wilson lines to a fusion 
category $\W$ of boundary Wilson lines. In the Cardy case, \W\ coincides with \C\ 
and this functor is nothing but the forgetful functor $U$ from \eqref{eq:Uforget}. 
Second, by adiabatically moving the bulk insertion labeled by $F$ into the boundary, 
we obtain a boundary insertion, and this boundary insertion is labeled by the object
$U(F) \eq \Ze \iN \C$ (recall from \eqref{eq:UFL} that this has the structure of
a coend). Finally there aren't any other insertions involved. 
We conclude that the vector space in which the boundary states take their value is
  \be
  \Blout = \HomC(\one,\Ze)
  \label{eq:out-blocks}
  \ee
for the case of \emph{outgoing} boundary states, respectively
  \be
  \Blin = \HomC(\Ze,\one)
  \label{eq:in-blocks}
  \ee
for \emph{incoming} boundary states.

This conclusion holds in the first place only as long as \C\ is semisimple, which is
assumed in the setting of \cite{fusV}. It is natural to expect that, analogously as
in the case of bulk correlators \cite{fuSc22},
expressions for boundary conformal blocks remain correct in the non-semisimple case
once they are written, as we did, in a form that makes sense without assuming
semisimplicity. And indeed a detailed study of a non-semisimple generalization of
the Turaev-Viro state sum construction \cite{fScS3} confirms this expectation.

\medskip

Now recall from the Introduction that the boundary states are expected to furnish 
a ring homomorphism from the fusion ring $K_0(\C)$ of \C\ to the center of \C, 
i.e.\ to the linear natural endo-transformations $\EndId$ of the identity functor
(not to be confused with the monoidal (Drinfeld) center \ZC\ of \C). 
For compatibility, the center must be isomorphic to the spaces 
\erf{eq:out-blocks} and \erf{eq:in-blocks} of conformal blocks obtained above.
This is indeed the case: As already pointed out in formula \erf{eq:EndId=Hom} in the
Introduction, for any finite tensor category \C\ the space $\EndId$ is isomorphic as a
\koc-algebra to the morphism space $\HomC(L,\one)$ with $L \,{\equiv}\, \Ze$
(see also \Cite{Lemma\,4}{kerl5} and \Cite{Prop.\,5.2.5}{KEly}).
An isomorphism from $\HomC(L,\one)$ to $\EndId$ (as vector spaces, and even as 
algebras) is furnished by composition with the natural coaction of $L$, i.e.
  \be
  \HomC(L,\one) \ni~\alpha \,\xmapsto{~~}\,
  \big(\, (\id_x \oti (\alpha\cir\iL_x)) \cir (\coev_x \oti \id_x) \,{\big)}_{x\in\C} \,,
  \ee
and an inverse to this linear map is given by composition with the counit of $L$, i.e.\
a natural transformation $g \eq (g_x) \iN\EndId$ is mapped to the element 
$\alpha_g \iN \HomC(L,\one)$ that is determined by
  \be
  \alpha_g \circ \iL_x = \epsL \circ \iL_x \circ (\id_{x^\vee_{}} \oti g_x)
  = \ev_x \circ (\id_{x^\vee_{}} \oti g_x) \,.
  \ee

Since $L$ is self-dual, the space $\HomC(L,\one)$ is, in turn, isomorphic to $\HomC(\one,L)$.
Indeed, these two spaces are even isomorphic as algebras (with convolution product). One way
to see this is via isomorphisms to the space $\HomZ(F,F)$ of endomorphisms of the bulk object
$F$ in the center of \C; these will be presented in Section \ref{sec:annulusblocks} below. 

\begin{Example}\label{X:2}
(i)\,~If \C\ is semisimple, we have
  \be
  \HomC(L,\one) = \bigoplus_{i\in I} \HomC(x_i^\vee {\otimes}\, x_i,\one)
  \cong\, \bigoplus_{i\in I} \EndC(x_i) \,,
  \ee
so that a natural basis is given by the evaluation morphisms 
$\ev_{x_i} \iN \HomC(x_i^\vee {\otimes}\, x_i,\one)$, respectively by the identity
morphisms $\id_{x_i} \iN \EndC(x_i)$. The latter are precisely the Ishibashi states depicted
in \erf{eq:|i>>}.
\\[2pt]
(ii)\,~For the logarithmic $(p,1)$ triplet models, the space $\HomC(L,\one)$ is 
$(3p{-}1)$-dimensional. A basis of this space has been obtained in \Cite{Sect.\,3.1}{garu}
and \Cite{Sect.\,5.1}{garu2} (compare also \Cite{Sect.\,5.2}{gaTi}). There, these conformal
blocks were determined by interpreting them as bilinear forms on
$\mathcal H^\text{bulk} \times \V$ that are compatible with \V-actions and solving the
constraints, or chiral Ward identities, which implement this compatibility requirement.
\\[2pt]
(iii)\,~For $\C \eq H$\Mod\ the category of \findim\ left modules over a \findim\ factorizable
ribbon Hopf algebra $H$, $\HomC(L,\one) \,{\cong}\, \Hom_H(\ko,H)$ is the center of $H$ and 
$\HomC(\one,L) \,{\cong}\, \Hom_H(H,\ko)$ is the space of class functions, see e.g.\ \cite{coWe6}.
\end{Example}


\subsection{Boundary states as (co)characters}
	
Recall that a  boundary state captures information about one-point functions of bulk fields
on a disk with specified boundary condition $m$. As explained in Section \ref{sec:1.1}, in
rational CFT it can be expressed as a finite sum \erf{eq:|x_a>} over simple objects $x_i$.
In the non-semisimple situation the restriction to simple objects is, however, not natural.
Category theory gives us a clear instruction what to do: consider all objects (and also account
for all morphisms). Hence we let $x$ be an arbitrary object of \C\ and use an evaluation
morphism to bend down the $x$-line in the way indicated in picture \erf{eq:openHopf2char}
-- which we repeat here for convenience (with appropriate change of labels):
     \def\locpa {1.4} 
  \be
  \begin{tikzpicture}
  \draw[very thick,color=red] (-0.08*\locpa,1.35*\locpa) arc (95:445:0.85*\locpa cm and 0.35*\locpa cm);
  \draw[very thick,color=blue] (0,0) -- (0,0.58*\locpa);
  \draw[very thick,color=blue] (0,0.74*\locpa) -- (0,1.7*\locpa);
  \node (a) at (0.48*\locpa,0.57*\locpa) {$\scs m $};
  \node (i) at (0.20,0.09) {$\scs x $};
  \end{tikzpicture}
  \raisebox{3.3em} {\hspace*{2.3em} $ \xmapsto{~~~~} $ \hspace*{1.2em} }
  \begin{tikzpicture}
  \draw[very thick,color=red] (-0.41*\locpa,0.77*\locpa) arc (222:570:0.85*\locpa cm and 0.35*\locpa cm);
  \draw[very thick,color=blue] (0,0) -- (0,0.58*\locpa);
  \draw[very thick,color=blue] (0,0.74*\locpa) .. controls (0,1.35*\locpa)
                               and (-0.8,1.35*\locpa) .. (-0.8,0) ;
  \node (a) at (0.65*\locpa,0.57*\locpa) {$\scs m $};
  \node (i) at (0.19,0.08) {$\scs x $};
  \node (ii) at (-1.07,0.08) {$\scs x^\vee $};
  \label{eq:mloop-xdown}
  \end{tikzpicture}
  \ee

Further, conceptually the summing over simple objects that is performed in the semisimple
case is a way of implementing the functoriality of conformal blocks. Its natural
non-semisimple generalization is to take the coend of the relevant functor over all 
objects $x$. Now we know from \eqref{eq:UFL} that the coend $\int^{x\in \C}\! x^\vee\oti x$
is nothing but $\Ze$, and so we indeed arrive at a vector in the space \erf{eq:in-blocks} of
conformal blocks for incoming boundary states.

Likewise, by using the coevaluation instead of the evaluation we can bend the $x$-line upwards
instead of downwards, according to
  \be
  \begin{tikzpicture}
  \draw[very thick,color=red] (-0.08*\locpa,1.35*\locpa) arc (95:445:0.85*\locpa cm and 0.35*\locpa cm);
  \draw[very thick,color=blue] (0,0) -- (0,0.58*\locpa);
  \draw[very thick,color=blue] (0,0.74*\locpa) -- (0,1.7*\locpa);
  \node (a) at (0.48*\locpa,0.57*\locpa) {$\scs m $};
  \node (i) at (0.19,0.08) {$\scs x $};
  \end{tikzpicture}
  \raisebox{3.3em} {\hspace*{2.1em} $ \xmapsto{~~~~} $ \hspace*{1.2em} }
  \begin{tikzpicture}
  \draw[very thick,color=red] (-0.06*\locpa,1.72*\locpa) arc (142:489:0.85*\locpa cm and 0.35*\locpa cm);
  \draw[very thick,color=blue] (0,2.3*\locpa) -- (0,1.35*\locpa);
  \draw[very thick,color=blue] (0,1.19*\locpa) .. controls (0,0.82*\locpa)
                               and (-0.8,0.82*\locpa) .. (-0.8,2.3*\locpa) ;
  \node (a) at (0.94*\locpa,1.05*\locpa) {$\scs m $};
  \node (i) at (0.19,2.22*\locpa) {$\scs x $};
  \node (ii) at (-1.07,2.22*\locpa) {$\scs x^\vee $};
  \node (nix) at (0,0.81) {$ $};
  \label{eq:mloop-xup}
  \end{tikzpicture}
  \ee
We then arrive again at an element in the correct space of blocks, namely the space 
\erf{eq:out-blocks} of conformal blocks for outgoing boundary states, if we take into account all
objects $x$ of \C\ as well as all relations among them. Technically, in the case at hand this is
achieved by a limit rather than colimit construction, namely taking an \emph{end} instead of 
a coend. However, owing to the existence of the non-degenerate pairing $\omegaL$, as an
object the end $\int_{x\in \C} x^\vee\oti x$ is again given (up to unique isomorphism) by $L$.

\medskip

Now comparing the picture \erf{eq:mloop-xdown} 
with the expression \erf{Qrep2} for the characters $\chii^L_m$, we can
immediately conclude that the right hand side $|m\rangle_x^\text{in}$ of \erf{eq:mloop-xdown} 
is indeed nothing but $\chii^L_m$, up to composition with a dinaturality morphism:
  \be
  \Blin \ni~ |m\rangle_x^\text{in} = \chii^L_m \circ \iL_x .
  \label{eq:mxin}
  \ee
Thus in short, we can formulate the following

\begin{pstl}\label{pstl:in}
Incoming boundary states are $L$-characters.
\end{pstl}   

For obtaining an analogous interpretation of the right hand side $|m\rangle_x^\text{out}$ of the
picture \erf{eq:mloop-xdown}, we need in addition the information that \Cite{Lemma\,2.1}{fuSc23}
the dinatural family $\jL$ for $L \eq \Ze$ as an \emph{end} can be expressed as 
 \footnote{~For defining the dinatural family of the end one could alternatively use the
 Frobenius form instead of the Hopf pairing. By the universality of the end the two
 families must be isomorphic, and indeed they differ just by composition with $S_L$.}
  \be
  \HomC(L,x^\vee{\otimes}\,x) \ni~ 
  \jL_x = \big( \id_{x^\vee_{}\otimes x} \oti [ \omegaL \cir (\iL_x \oti \id_L)] \big)
  \circ (\coev_{x^\vee_{}\otimes x} \oti \id_L)
  \ee
in terms of the dinatural family $\iL$ for $L$ as a coend
(here we again suppress the pivotal structure of \C).
Combining this relation and the result \erf{eq:mxin} with the expression \erf{eq:cochi^L_m}
for cocharacters and with the formula \erf{char-cochar} for the relation between characters 
and cocharacters, it follows that, similarly as in \erf{eq:mxin}, we have
  \be
  \Blout \ni~ |m\rangle_x^\text{out} = \jL_x \circ \cochi^L_m .
  \label{eq:mxout}
  \ee
We can again express this result as a

\begin{pstl}\label{pstl:out}
Outgoing boundary states are $L$-cocharacters.
\end{pstl}   

\medskip

That, as seen in Lemma \ref{lem:ringhomom}, the characters and cocharacters neatly provide us 
with ring homomorphisms from the fusion ring $K_0(\C)$ to the center of \C, convincingly
supports these conclusions. 

\begin{rem}
If \C\ is non-semisimple, then the characters and cocharacters, and thus the outgoing 
and incoming boundary states, only span proper subspaces of the respective spaces $\Blout$
and $\Blin$ of conformal blocks. For the torus partition function The same pattern is known
\Cite{Rem.\,4(ii)}{fuSs4} to be present if \C\ is the \rep\ category of a non-semisimple
Hopf algebra, and is expected to arise analogously if \C\ is any non-semisimple finite CFT.
\end{rem}

\begin{rem}
For a non-semisimple algebra, characters do not fully specify a 
\rep, since they split under extensions (see formula \erf{eq:split}). Hence in the
non-semisimple case, boundary conditions cannot be fully distinguished by their boundary 
states, and thus cannot be classified completely with the help of boundary states.
(Likewise, the torus partition function does not fully specify the bulk state space.)
\end{rem}

\begin{Example}\label{X:3}
(i)\,~If \C\ is semisimple, then the characters $\chi^L_m$ span all of $\HomC(L,\one)$.
Also note that the action \erf{eq:rhoLm} of $L$ involves a double braiding. For 
semisimple \C, this gives rise to the appearance of entries of the (Hopf link)
S-matrix $\soo$ and thereby reproduces the familiar formula \erf{|x_a>} for the Cardy
case boundary states of rational CFTs.
\\[2pt]
(ii)\,~For the logarithmic $(p,1)$ triplet models, there are $2p$ simple objects and thus
$2p$ Cardy boundary states. This agrees with the findings in
\Cite{Sect.\,3.3}{garu} and \Cite{Sect.\,5.2}{garu2}. Note, however, that the approach of
\cite{garu,garu2} for determining boundary states is quite different from ours. Namely,
it consists of making an ansatz for the boundary states as linear combinations of 
Ishibashi states which form a basis of the relevant space of conformal blocks (similar to the
states $|i\rrangle$ that are given by \erf{eq:|i>>} for the rational case) and then imposing
the so-called Cardy condition for the resulting annulus amplitudes. The solution to this
constraint, which is found with the help of explicit expressions for the characters of the
triplet vertex operator algebra and their modular transformations, is not unique.
In \Cite{Sect.\,5.3}{gaTi} additional boundary states were suggested, yielding a total
of $3p\,{-}\,1$ boundary states; including these leads to annulus coefficients which are
integral but, unlike those for the $2p$ boundary states obtained in \cite{garu2} (see Example 
\ref{X:4} below) can be negative.
\\[2pt]
(iii)\,~For $\C \eq H$\Mod\ the category of \findim\ left modules over a \findim\ factorizable
ribbon Hopf algebra $H$, the subspace of $\HomC(L,\one)$, i.e.\ of the center of $H$, that is 
spanned by the characters $\chii^L_m$
is the Reynolds ideal of $H$, i.e.\ the intersection of the socle of $H$ with the center,
while the subspace of $\HomC(\one,L)$, i.e.\ of the class functions, is the character algebra 
of $H$, i.e.\ the span of all $H$-characters; see e.g.\ \cite{coWe6,loren3}.
Further, the $L$-characters $\chii^L_m$ are related to the $H$-characters by the Drinfeld map
\Cite{Lemma\,6}{fuSs4}, while with the help of the explicit form of the dinatural morphisms
$\iL_m$ (see \Cite{Prop.\,A.6}{fuSs3}) one shows that the $L$-cocharacters just coincide with
the $H$-characters as morphisms in $H$\Mod.
Also, the fact that the Hopf algebra $H$ is a symmetric algebra allows one to understand
aspects of the complement of the character algebra in the space of class functions 
in terms of linear functions on the endomorphism rings of projective $H$-modules \cite{arik3}.
\end{Example}


\section{Annulus amplitudes}\label{sec:annulus}

In this final section we compute the annulus amplitudes that result from our proposal for the 
boundary states for the Cardy case of a finite CFT. Concretely, we obtain the correlator for 
an annulus with boundary conditions $m$ and $n$ and without field insertions by sewing the
one-point correlators for bulk fields on two disks having boundary conditions $m$ and $n$,
respectively, i.e.\ of the boundary states for $m$ and $n$. As in Section
\ref{sec:bstates} we start by describing the relevant spaces of conformal blocks.

\subsection{Blocks for annulus amplitudes}\label{sec:annulusblocks} 

As pointed out above, for non-semisimple conformal field theories, conformal blocks for world 
sheets with boundary have so far not been constructed systematically. But for partition functions,
i.e.\ correlators without field insertions, it is quite irrelevant whether the bulk object
is of factorized form, as in the semisimple case, or not. We can therefore transfer the 
insight \cite{fffs3,fuRs4} that the conformal blocks for a correlator are those for the
complex double of the relevant conformal surface from the semisimple to the general case.
Now the complex double of an annulus is a torus. Moreover, the space 
of zero-point conformal blocks on a torus is known \cite{lyub11,fuSc22} to be 
isomorphic to the morphism spaces $\HomC(\one,L)$ and $\HomC(L,\one)$, i.e.\ to the spaces
\erf{eq:out-blocks} and \erf{eq:in-blocks} for outgoing and incoming boundary states which, in 
turn, are (see \erf{eq:EndId=Hom})
isomorphic as algebras to the natural endo-transformations of the identity functor:
  \be
  \BlA \cong \Blout  \cong \Blin \cong \EndID \,.
  \label{eq:BlA-cong-1}
  \ee
To be precise, when specifying an annulus partition function we must also tell in which `channel'
it is considered -- closed-string channel or open-string channel -- or, in the more precise terminology
of the Lego-Teichm\"uller game \cite{baKir,fuSc22}, with which auxiliary marking the annulus
is endowed. In accordance with the observation about the connection with vertex algebra characters
in Remark \ref{rem:chiv}, the open-string channel should correspond to the space $\HomC(\one,L)$.
It is this space that we mean in the sequel when referring to annulus blocks, i.e.\ we set
  \be 
  \BlA = \HomC(\one,L) = \Blout \,.
  \label{eq:BlA=Blout}
  \ee

For the discussion of sewing, yet another description of this space will be convenient,
namely as a morphism space in the Drinfeld center:
  \be
  \BlA \cong \EndZ(F) \,.
  \label{eq:BlA-cong-2}
  \ee

To arrive at this description we make use of the fact (see Appendix \ref{app:centralmonad}) that
for a modular tensor category \C\ the Drinfeld center \ZC\ is equivalent to the category of
$L$-modules in \C. Recall from Section \ref{sec:bulkalgebra} that $L \eq \Ze \eq U(F)$ has 
a structure of a Frobenius algebra in \C, and that the induction functor $I$ from \C\ to \ZC\
and forgetful functor $U\colon \ZC \To \C$ form a bi-adjoint pair. 
The adjunction morphisms are given by the elementary \rep-theoretic formulas \erf{eq:Ula=I} 
and \erf{eq:Ura=I}, respectively, specialized to the case at hand, in which $I(\one) \eq F$. 
For completeness, let us write down the linear isomorphisms 
  \be
  \vphi: ~ \HomC(\one,L) = \HomC(\one,U(F)) \,\xleftrightarrow{~\cong~}\,
  \HomZ(I(\one),F) = \HomZ(F,F) ~: \vpsi
  \label{eq:out-blocks-2}
  \ee
and
  \be
  \tphi: ~ \HomC(L,\one) = \HomC(U(F),\one) \,\xleftrightarrow{~\cong~}\, 
  \HomZ(F,I(\one)) = \HomZ(F,F) ~: \tpsi
  \label{eq:in-blocks-2}
  \ee
which express that $I$ is, respectively, a left and right adjoint of $U$, explicitly.
They are given by
  \be
  \vphi(\tild\alpha) = \mulF \circ (\id_F \oti \tild\alpha) \qquand
  \vpsi(g) = g \circ \etaF \,,
  \label{eq:1,L<->F,F-2}
  \ee
and by
  \be
  \tphi(\alpha)
  = \big( \id_F \oti (\alpha \cir\mulF)\big) \circ \big( (\DeltaF\cir\etaF) \oti \id_F \big)
  = (\id_F \oti \alpha) \circ \DeltaF
  \label{eq:L,1<->F,F-2a}
  \ee
and
  \be
  \tpsi(g) := \epsF \circ g \,,
  \label{eq:L,1<->F,F-2b}
  \ee
respectively.
(The second expression in \eqref{eq:L,1<->F,F-2a} follows from the first by using
the Frobenius and unit axioms.)

Note that in the formulas above the endomorphisms of $F$ are described as morphisms in \C.
That they are indeed even morphisms in \ZC\ may be verified by direct calculation, but
can be seen more conceptually by noting that they can also be expressed in terms of the
central monad on \C\ (compare the formulas \erf{eq:1,L->F,F} and \erf{eq:F,F->1,L}).
Further, the linear maps \erf{eq:out-blocks-2} are in fact algebra isomorphisms, with the 
multiplication on $\HomZ(F,F)$ being composition and the one on $\HomC(\one,L)$ given by 
the convolution product \eqref{tildalpha.tildbeta} \Cite{Thm.\,3.9}{shimi9}.

\begin{rem}
The compatibility between bulk and boundary theories is of widespread interest
in various settings of quantum field theory.
The isomorphisms \erf{eq:BlA-cong-1} and \erf{eq:BlA-cong-2} can be viewed as one aspect
of such a compatibility: $\HomC(L,\one)$ and $\HomC(\one,L)$ have a natural interpretation
as spaces for boundary states, while $\EndZ(F)$ appears naturally when studying bulk fields. 
\end{rem}


\subsection{Sewing of boundary blocks}

For understanding the relation between boundary states and annulus 
amplitudes we need to know how factorization relates conformal blocks for disks to 
annulus blocks, i.e.\ to find four sewing maps
  \be
  \Blinout \oticx \Blinout \xrightarrow{~~~} \BlA
  \label{eq:out-in->ann}
  \ee
from the tensor product of the conformal blocks for incoming and/or outgoing boundary states 
to the space of annulus blocks. Clearly, these maps are not bijections; this reflects the
fact that no summation over the bulk insertions on the boundary disks is implied.

The following two ingredients are crucial for being able to understand these sewing maps:
First, with the help of the results of Section \ref{sec:annulusblocks}
we can rewrite the maps \erf{eq:out-in->ann} as the composition of known isomorphisms with 
a sewing map $s$ that only involves morphism spaces in \ZC\ or, in other words, 
conformal blocks for the bulk theory. Specifically,
  \be
  \begin{array}{ll}
  \Blout \oticx \Blin = \HomC(\one,L) \oticx \HomC(L,\one) \!\!\!& 
  \xrightarrow[~\,\vphi\otimes\tphi~]{\cong}\, \EndZ(F) \oticx \EndZ(F)
  \Nxl2 &
  \xrightarrow{~~~s~~~}\, \EndZ(F) \,\xrightarrow[~~\vpsi~~]{\cong}\, \BlA 
  \eear
  \label{eq:1LxL1->FFxFF->FF->L1}
  \ee
for the case of a sewing of incoming and outgoing boundary blocks, and analogously for 
other combinations.

Second, the sewing of morphism spaces of $\ZC \,{\simeq}\, \CbC$ is fully understood
\cite{lyub11,fuSc22}; it is realized by the structural maps of appropriate coends in \ZC. 
Moreover, for the morphism spaces in question these structural maps just amount to 
composition of linear maps (see e.g. \Cite{Cor.\,2.3}{fuSc23}).
Thus the sewing map $s$ in \eqref{eq:1LxL1->FFxFF->FF->L1} takes the simple form
  \be
  s:\quad \EndZ(F) \oticx \EndZ(F) \ni~
  f_1 \oti f_2 \,\longmapsto\, f_2 \cir f_1 ~\in \EndZ(F) \,.
  \label{eq:sewing}
  \ee

Let us write down the resulting sewing maps \erf{eq:out-in->ann} explicitly.
We start with the sewing of outgoing to outgoing boundary blocks, which will confirm the
identification \erf{eq:BlA=Blout} of the (open-string channel) annulus amplitude. We have
  \be
  \bearll
  \tild\alpha \oti \tild\beta \!\!&
  \xmapsto{~\vphi\otimes\vphi~} 
  (\mulF \cir (\id_F \oti \tild\alpha)) \otimes (\mulF \cir (\id_F \oti \tild\beta))
  \Nxl2 &
  \xmapsto{~~~s\,~~}\, \mulF \cir (\mulF \oti \tild\beta) \cir (\id_F \oti \tild\alpha)
  \Nxl2 & \hsp{1.62}
  =\, \mulF \cir \big( \id_F \oti [ \mulF \cir (\tild\alpha \oti \tild\beta) ] \big)
  \,\equiv\, \vphi(\tild\alpha \cvp \tild\beta)
  \,\xmapsto{~~\vpsi~~}\, \tild\alpha \cvp \tild\beta
  \eear
  \label{eq:sew=convolute}
  \ee
for any $\tild\alpha,\tild\beta \iN \HomC(\one,L)$, where the equality holds by associativity.
In short, $\Blout$-$\Blout$-sewing amounts to convolution in $\HomC(\one,L)$.

\medskip

The sewing of outgoing to incoming boundary blocks (or vice versa) amounts to a convolution 
as well provided, however, that we also include a modular S-transformation, as might be
anticipated from the observation \erf{eq:in-out-dual}. Indeed, directly we have
  \be
  \bearll
  \tild\alpha \oti \beta \!\!&
  \xmapsto{~\vphi\otimes\tphi~} 
  (\mulF \cir (\id_F \oti \tild\alpha)) \otimes ((\id_F \oti \beta) \cir \Delta_F)
  \Nxl2 &
  \xmapsto{~~\,s~~~} (\id_F \oti \beta) \circ \Delta_F \circ 
  \mulF \circ (\id_F \oti \tild\alpha)
  = [ \id_F \oti (\beta \cir \mulF) ] \cir ( \Delta_F \oti \tild\alpha )
  \eear
  \ee
for any $\tild\alpha \iN \HomC(\one,L)$ and $\beta \iN \HomC(L.\one)$, where the equality uses
the Frobenius axiom. By direct rewriting, using the first of the expressions in \erf{eq:DeltaF=A10}
for the Frobenius coproduct $\DeltaF$ and formula \erf{eq:omegaFm} for $\omegaFm$, we then have
  \be
  \bearll
  s \circ (\vphi\oti\tphi)\, \big(\tild\alpha \oti (\beta \cir S_L) \big) \!\!&
  = \big( \id_L \otimes (\beta \cir S_L \cir \mulF) \big) \circ (\DeltaF \oti \tild\alpha)
  \Nxl2 &
  = \big( \id_L \otimes (\beta \cir S_L \cir \mulF) \big) \circ (\id_L \oti \mulF \oti \tild\alpha)
  \circ (\omegaFm \oti \id_L) \,.
  \eear
  \ee
Inserting the explicit form \erf{eq:S=omegaF.omegaLm} of the endomorphism $S_L$ 
and using associativity of $\mulF$ together
with the invariance of the Frobenius form this, in turn, implies that
  \be
  \bearll
  s \circ (\vphi\oti\tphi)\, \big(\tild\alpha \oti (\beta \cir S_L) \big) \!\!&
  = (\mulF \oti \beta) \circ (\mulF \oti \omegaLm) \circ (\id_L \oti \tild\alpha)
  \Nxl2 &
  = \mulF \cir \big( \id_F \oti [ \mulF \cir (\tild\alpha \oti \tild\beta) ] \big)
  \,\equiv \vphi(\tild\alpha \cvp \tild\beta) \,,
  \eear
  \label{eq:out-sew-in.S}
  \ee
where $\tild\beta \iN \HomC(\one,L)$ is defined analogously as in the relation \erf{char-cochar}
between characters and cocharacters, i.e.
  \be
  \tild\beta := (\id_L \oti \beta) \circ \omegaLm .
  \label{eq:tildebeta}
  \ee
Thus also the sewing of an outgoing and an incoming boundary block can be described as a
convolution in the space $\HomC(\one,L)$, provided that before sewing the incoming boundary 
block is precomposed with an S-transformation. For performing the convolution, the incoming
boundary block is transformed to an outgoing one via the side inverse of the Hopf pairing.

Finally, the sewing of two incoming boundary blocks amounts to flipped $\DeltaF$-convolution:
we have
  \be
  \bearll
  \alpha \oti \beta \!\!& \xmapsto{~\tphi\otimes\tphi~} 
  ((\id_F \oti \alpha) \cir \Delta_F) \otimes ((\id_F \oti \beta) \cir \Delta_F)
  \Nxl2 &
  \xmapsto{~~~s\,~~} (\id_F \oti \beta) \cir (\Delta_F \oti \alpha) \cir \Delta_F
  = \big( \id_F \oti [ (\beta \oti \alpha) \cir \Delta_F] \big) \cir \Delta_F
  \equiv \tphi(\beta \cvpF \alpha) 
  \Nxl1 &
  \xmapsto{~~~\tpsi~~} \beta \cvpF \alpha
  \eear
  \label{eq:sew=convolute2}
  \ee
for any $\alpha,\beta \iN \HomC(L,\one)$, where the equality holds by coassociativity.
In view of the result \erf{eq:out-sew-in.S} above, it is natural to consider alternatively
the sewing after precomposing both of the incoming boundary blocks with $S_L$,
  \be
  (\alpha \cir S_L) \oti (\beta  \cir S_L) \,\xmapsto{~~~~}\,
  (\beta\cir S_L) \cvpF (\alpha\cir S_L) \,.
  \label{eq:a.SL-sew-b.SL}
  \ee
A straightforward calculation shows
that the sewn expression in \erf{eq:a.SL-sew-b.SL} can be rewritten as 
  \be
  (\beta \cir S_L) \cvpF (\alpha \cir S_L)
  = \omegaF \circ \big( \id_L \oti (\tild\alpha \cvp \tild\beta) \big)
  \equiv \omegaL \circ \big( \id_L \oti [ S_L \cir (\tild\alpha \cvp \tild\beta)] \big) 
  \label{eq:a.SL-sew-b.SL-2}
  \ee
with $\tild\beta$ as in \erf{eq:tildebeta} and analogously for $\tild\alpha$.
Thus also in this case we get an expression that is a convolution product, up to
suitable insertions of $S_L$-automorphisms
and using $\omegaL$ and its side inverse to relate incoming and outgoing boundary blocks.

\medskip

To summarize, the \emph{sewing of two boundary blocks amounts to a suitable convolution}.

\begin{rem}\label{rem:S}
The occurrence of an additional S-transformation when switching between outgoing and
incoming boundary blocks should not come as a surprise once one remembers that
according to formula \eqref{eq:in-out-dual}, characters and cocharacters are related to 
each other by a duality morphism combined with an S-transformation.
More significantly, this behavior can be appreciated through a comparison with the 
description of sewing that is natural in the context of the TFT construction of the
correlators of rational CFTs. In that context, sewing combines an outgoing and an 
incoming block, and it automatically involves a modular S-transformation;
see e.g.\ Sections 5.2 and 5.3 of \cite{fjfrs} or Section 2.2 of \cite{fjfs} for details.
For the same type of sewing, a shadow of that S-transformation appears in the present 
approach, namely as the morphism $S_L$ in \erf{eq:out-sew-in.S}.
\end{rem}


\subsection{Annulus amplitudes via sewing of boundary states}

Knowing the sewing maps on the respective spaces of boundary blocks, we can proceed to
the sewing of the boundary states proposed above as specific elements of those spaces.

Consider first the sewing of two outgoing boundary states corresponding to boundary
conditions $m$ and $n$. According to \erf{eq:mxout} these are given by the cocharacters
$\cochi^L_m$ and $\cochi^L_n$. The sewing relation \erf{eq:sew=convolute}, combined
with \erf{eq:mn=m*n} tells us that the (open-string channel) annulus amplitude with
boundary conditions $m,n\iN\C$ is 
  \be
  \BlA \ni~ \mathrm A_{mn} = \cochi^L_m \cvp \cochi^L_n = \cochi^L_{m \otimes n} \,.
  \ee

Now the mapping $[m] \,{\mapsto}\, \cochi^L_m$ is an injective homomorphism of monoids
from the Grothendieck ring $K_0(\C)$ to $\HomC(\one,L)$ \Cite{Cor.\,4.2}{shimi9}, so that
the annulus amplitude can be written as
  \be
  \mathrm A_{mn} = \cochi^L_{m \otimes n} = \sum_{l\in I} N_{mn}^{\phantom{m.}l}\, \cochi^L_l
  \label{eq:Amn=Nmnl.cochiLl}
  \ee
with $N_{mn}^{\phantom{m.}l} \iN \zet_{\ge0}$ the structure constants of $K_0(\C)$.
Thus $\mathrm A_{mn}$ can be expanded as a linear combination of cocharacters, with
coefficients given by the structure constants of the Grothendieck ring. In particular,
$\mathrm A_{mn}$ lies completely in the subspace of the space $\HomC(\one,L)$ of conformal
blocks that is spanned by the cocharacters -- recall that this is a proper subspace of
non-zero codimension unless \C\ is semisimple. In short, \emph{the annulus coefficients
are fusion coefficients}.

Since according to Remark \ref{rem:chiv} the cocharacters correspond to the genus-1 one-point
functions of vertex algebra \rep s, and since fusion coefficients are by definition non-negative 
integers, we see in particular that the annulus amplitude $\mathrm A_{mn}$ naturally
admits an interpretation as a \emph{partition function} that counts (open-string) states,
precisely like in the semisimple case. This constitutes a rather non-trivial check of the
interpretation of boundary states as (co)characters. 

\begin{Example}\label{X:4}
(i)\,~For semisimple \C, the formula \erf{eq:Amn=Nmnl.cochiLl} reproduces the familiar result 
\cite{card9} for the Cardy case annulus amplitude in the open-string channel. Note that the
semisimple case is too degenerate -- in that case the cocharacters span all of $\HomC(\one,L)$
-- to illustrate the non-trivial feature of \erf{eq:Amn=Nmnl.cochiLl} that the
`pseudo-cocharacters' do not contribute to the amplitude.
\\[2pt]
(ii)\,~Annulus amplitudes for the logarithmic $(p,1)$ triplet models have been computed in
\Cite{Sect.\,3.2}{garu} and \Cite{Sect.\,5.2}{garu2}. Specifically, once cocharacters are
identified with vertex algebra one-point functions, \erf{eq:Amn=Nmnl.cochiLl} reproduces formula 
(5.10) of \cite{garu2}, which in the context considered there arises as an assumption
that precedes the determination of the boundary states.
\end{Example}

The other types of sewings considered in the previous subsection amount to the following
statements for annulus amplitudes. For the sewing of an incoming and an outgoing boundary 
state, the formula \erf{eq:out-sew-in.S} yields
  \be
  \cochi^L_m \,\otimes\, \chii^L_n ~\xmapsto{~~{\rm sew}~~}~ 
  \cochi^L_m \,\cvp\, \Om^{-1}( \chii^L_n \cir S_L^{-1} ) \,,
  \label{cochi-chii-sewing}
  \ee
while according to \erf{eq:a.SL-sew-b.SL-2}, sewing two incoming boundary states is given by
  \be
  \chii^L_m \,\otimes\, \chii^L_n ~\xmapsto{~~{\rm sew}~~}~
  S_L^{} \circ \big( \Om^{-1}( \chii^L_m \cir S_L^{-1} ) \cvp \Om^{-1}( \chii^L_n \cir S_L^{-1} ) 
  \big) \,.
  \ee
Here $\Om\colon \HomC(\one,L) \To \HomC(L,\one)$ is the composition with the Hopf pairing, as
defined in \erf{eq:defOm}, which exchanges characters and cocharacters.

The presence of the modular S-transformation $S_L \iN \EndC(L)$ in \erf{cochi-chii-sewing}
has a natural interpretation in terms of the TFT construction of the correlators, see
Remark \ref{rem:S} above. Another way to understand it is obtained by realizing that the 
annulus amplitude described by \erf{cochi-chii-sewing} is in the closed-string channel -- the
formula thus tells us that the annulus amplitudes in the open- and closed-string channels are 
related by an S-transformation.


\subsection{Boundary fields}

Finally we note that the interpretation of the annulus amplitudes as partition functions
implies constraints on the ``objects of boundary fields''. More specifically, the boundary
fields that change a boundary condition $m$ to the boundary condition $n$ must be compatible
with the annulus amplitude $\mathrm A_{m^\vee n}$ as given by \erf{eq:Amn=Nmnl.cochiLl}. There
is a priori no guarantee that those constraints can be satisfied at all. But inspection shows
that this is indeed possible, namely by taking these fields to be given by the corresponding
\emph{internal Hom} for \C\ as a (right) \C-module category, i.e.\ by
  \be
  B_{mn} := \underline{\Hom}(m,n) \cong m^\vee \otimes n ~\in \C \,.
  \ee
This natural prescription generalizes the finding in the semisimple case; that it is compatible
with the annulus amplitudes gives additional support to our ansatz for the boundary states.
  
The collection of objects $B_{mn}$ is not only naturally associated with the structure of \C\
as a module category over itself, but also comes with further structure: It can be endowed with 
natural algebroid and coalgebroid structures that fit together as a \emph{Frobenius algebroid}
in \C. The non-zero components of the product of the algebroid and of the coproduct of the
coalgebroid are
  \be
  \bearll
  \mul_{l,m,n} := \id_{l^\vee} \otimes \tev_m \otimes \id_n : & B_{lm}\oti B_{mn} \to B_{ln} 
  \qquad {\rm and}
  \nxl2
  \Delta_{l,m,n} := \id_{l^\vee} \oti \coev_m \oti \id_n :~~ & B_{ln} \to B_{lm} \oti B_{mn} \,,
  \label{eq:(co)productoid}
  \eear
  \ee
while the components of the unit and counit are $\eta_m \eq \tcoev_m \colon \one \To B_{mm}$  
and $\eps_m \eq \ev_m \colon B_{mm} \To \one$, respectively. The Frobenius compatibility
condition satisfied by the morphisms \erf{eq:(co)productoid} reads
  \be
  \bearll
  (\id_{m^\vee\otimes k} \oti \mul_{k,l,n}) \circ (\Delta_{m,k,l} \oti \id_{l^\vee\otimes n})
  \!\!& = \Delta_{m,k,n} \circ \mul_{m,l,n}
  \Nxl2 &
  = (\mul_{m,k,l} \oti \id_{l^\vee\otimes n}) \circ (\id_{m^\vee\otimes k} \oti \Delta_{k,l,n}) \,.
  \eear
  \ee
Associativity of the algebroid is in fact a consequence of the realization by
internal Homs, even beyond the setting of finite tensor categories \Cite{Sect.\,3.3}{garW}.

In particular, the objects $B_{mm}$ which describe boundary fields that do not change the
boundary condition admit a structure of symmetric Frobenius algebra, with product
$\mul_{m,m,m}$, unit $\eta_m$, coproduct $\Delta_{m,m,m}$ and counit
$\eps_m$, which is in addition special iff $\dim_\C(m) \,{\ne}\, 0$.
 
\medskip

The algebroid structure of the objects $B_{mn}$ should realize the operator product of 
boundary fields, in much the same way as the multiplication $\mulF$ on the bulk 
object $F$ realizes the operator product expansion of bulk fields (and makes the 
associativity property of that operator product precise).
We expect that the description of boundary fields as internal Homs will generalize to
the non-Cardy case, in which the boundary conditions should still be the objects of some
suitable module category $\mathcal M$ over \C.

\vskip 4em

\noindent
{\sc Acknowledgements:}\\[.3em]
We are grateful to Ingo Runkel and Azat Gainutdinov
for discussions and helpful comments on the manuscript.
TG thanks JF and the Karlstad Physics Department for their hospitality during his visit
at the start of this project; JF thanks TG and Thomas Creutzig for their hospitality 
at UofA while the paper was being completed.
\\
JF is supported by VR under project no.\ 621-2013-4207. TG is supported in part by NSERC.
CS is partially supported by the Collaborative Research Centre 676 ``Particles,
Strings and the Early Universe - the Structure of Matter and Space-Time'' and by the 
RTG 1670 ``Mathematics inspired by String theory and Quantum Field Theory''.

\newpage 
\appendix

\section{Appendix}

\subsection{A canonical functor from the enveloping category to the Drinfeld center}\label{more:GC}

Here we provide further details about the canonical functor $\GC\colon \Cb \boti \C \to \Z(\C)$ 
that is introduced in \eqref{eq:CbC->ZC} for braided finite tensor categories \C.
Specifying a monoidal structure on \GC\ amounts to giving coherent isomorphisms 
  \be 
  \bearll
  \varphi_{\Ol u,v,\Ol x,y}^{} : &
  \Ol u \oti v \oti \Ol x \oti y = \GC(\Ol u \boti v) \otiz \GC(\Ol x \boti y)
  \Nxl2&
  \hsp{6,5} \xrightarrow{~~~~}~ \GC\big( (\Ol u\boti v) \oticc (\Ol x \boti y) \big)
  = \Ol u \oti \Ol x \oti v \oti y 
  \eear
  \label{eq:GCmonoidal}
  \ee
that obey the relevant hexagon and triangle identities. 
It is easily checked that any odd power of the braiding (in \C)
of the second and third tensor factor, i.e.
  \be
  \varphi_{\Ol u,v,\Ol x,y}^{(m)} := \id_u \oti (\cb^{2m+1}_{})_{v,\Ol x}^{} \oti \id_y
  \label{eq:varphi(m)}
  \ee
for any $m \iN \zet$ satisfies these requirements.

In order that the functor is even \emph{braided} monoidal, the morphisms $\varphi$
must in addition make the diagrams
  \be
  \begin{tikzcd}[column sep=13ex]
  \Ol u \oti v \oti \Ol x \oti y \ar{d}[swap]{\cz_{\Ol u\otimes v,\Ol x\otimes y}} 
  \ar{r}{\varphi_{\Ol u,v,\Ol x,y}^{}} 
  & \Ol u \oti \Ol x \oti v \oti y \ar{d}{\GC(\ccc_{\Ol u\otimes \Ol x,v\otimes y}^{})}
  \\
  \Ol x \oti y \oti \Ol u \oti v \ar{r}{\varphi_{\Ol x,y,\Ol u,v}^{}}
  & \Ol x \oti \Ol u \oti y \oti v
  \end{tikzcd}
  \label{tikz:zc-cc}
  \ee
commute.
Here $\Ccc$ is the braiding on the enveloping category \CbC\ that is induced from the one of 
\C, i.e.
  \be
  \ccc_{\Ol u \otimes \Ol x, v \otimes y}^{} = \cb_{\Ol u,\Ol x}^{-1} \otimes \cb_{v,y}^{} \,,
  \ee
while $\cz$ is the braiding in $\Z(\C)$ which just coincides with the half-braiding, i.e.\
is given by \eqref{eq:cB} with $c \eq \Ol x \oti y$ ($\cz_{\Ol u\otimes v,\Ol x\otimes y}$
must not be confused with the braiding of $\Ol u\oti v$ with $\Ol x\oti y$ in \C). Pictorially,
  \be
  \begin{tikzpicture}
  \braid[ line width=\piclinesize, height=2.1cm, style strands={1,2,3,4}{blue},
  ] (braid) at (-3,0) s_1-s_3^{-1};
  \node[at=(braid-rev-1-e),below=\piclabetsep] {$\Ol u$}; 
  \node[at=(braid-rev-2-e),below=\piclabetsep] {$\Ol x$}; 
  \node[at=(braid-rev-3-e),below=\piclabetsep] {$v$}; 
  \node[at=(braid-rev-4-e),below=\piclabetsep] {$y$}; 
  \node (t1) at (-5.1,-1.4) {$\ccc_{\Ol u \otimes \Ol x, v \otimes y}^{} ~=$};
  \braid[ line width=\piclinesize, height=0.7cm, style strands={1,2,3,4}{blue},
  ] (braid) at (4.9,0) s_2 s_1-s_3^{-1} s_2^{-1};
  \node[at=(braid-rev-1-e),below=\piclabetsep] {$\Ol u$}; 
  \node[at=(braid-rev-2-e),below=\piclabetsep] {$v$}; 
  \node[at=(braid-rev-3-e),below=\piclabetsep] {$\Ol x$}; 
  \node[at=(braid-rev-4-e),below=\piclabetsep] {$y$}; 
  \node (t2) at (3.1,-1.4) {$\cz_{\Ol u\otimes v,\Ol x\otimes y} ~=$};
  \end{tikzpicture}
  \ee
One finds that this requirement is solved uniquely by setting $m \eq 0$ in \eqref{eq:varphi(m)}, 
i.e.\ we have

\begin{lem}\label{lem:braidedstructureonGC}
The braided monoidal structure \erf{eq:GCmonoidal} on the functor \GC\ is given by
  \be
  \varphi_{\Ol u,v,\Ol x,y}^{} = \id_{\Ol u} \oti \cb_{v,\Ol x}^{} \oti \id_y 
  \ee
for $u,v,x,y \iN \C$,
\end{lem}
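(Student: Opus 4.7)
The plan is to verify the commuting square \eqref{tikz:zc-cc} directly for the candidate $\varphi^{(m)}_{\Ol u,v,\Ol x,y} = \id_{\Ol u}\otimes \cb^{2m+1}_{v,\Ol x}\otimes \id_y$ and to read off that this forces $m=0$. Since the family $\varphi^{(m)}$ has already been noted to satisfy all the pentagon and triangle constraints, the whole content of the lemma lies in the compatibility with the two braidings $\ccc$ on $\Cb \boti \C$ and $\cz$ on $\Z(\C)$.

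First, I would substitute the Ansatz $\varphi^{(m)}$ into both paths of the square and rewrite the resulting morphisms in the graphical calculus of $\C$ on the four-strand tensor product $\Ol u \otimes v \otimes \Ol x \otimes y$. The upper path becomes the composition
\[
(\cb^{-1}_{\Ol x,\Ol u}\otimes \cb^{}_{v,y})\,\circ\,(\id_{\Ol u}\otimes \cb^{2m+1}_{v,\Ol x}\otimes \id_y),
\]
using that the braiding in $\Cb$ is the inverse braiding of $\C$, while the lower path becomes
\[
(\id_{\Ol x}\otimes \cb^{2m+1}_{y,\Ol u}\otimes \id_v)\,\circ\,\cz_{\Ol u\otimes v,\Ol x\otimes y}.
\]
Using the explicit formula \eqref{eq:cB} for the half-braiding together with the hexagon axioms of $\cb$ on the tensor product $\Ol x \otimes y$, the morphism $\cz_{\Ol u\otimes v,\Ol x\otimes y}$ unfolds into the braid word $s_2 s_1 s_3^{-1} s_2^{-1}$ on four strands that is displayed just above the statement of the lemma.

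Second, an application of naturality of the braiding lets one slide the outer crossings $s_3^{-1}$ and $s_1$ past the middle crossings on the lower path, matching them, respectively, to the $\cb^{-1}_{\Ol x,\Ol u}$ and $\cb_{v,y}$ factors of $\GC(\ccc)$ on the upper path. After this matching, the commutativity of the square reduces to the identity
\[
\cb^{2m+1}_{v,\Ol x} \;=\; \cb_{v,\Ol x}\circ(\cb_{\Ol x,v}\circ\cb_{v,\Ol x})^{m}
\]
of morphisms in $\C$, that is, to the condition that the $m$-th power of the monodromy of $v$ with $\Ol x$ be the identity for all $v,\Ol x\in\C$. By modularity of $\C$, the monodromy is a non-degenerate natural transformation (it detects the Hopf pairing $\omegaL$ on the coend $L$, which is non-degenerate by Definition \ref{def:modular}); hence no non-zero power of it can equal the identity universally, which forces $m=0$ and yields the stated formula.

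The main obstacle in this plan is the careful bookkeeping of signs of crossings, in particular keeping track of which strands cross as elements of $\C$ and which as elements of $\Cb$ when unfolding $\cz$ and $\ccc$. This is handled most cleanly in the graphical calculus: once both sides are drawn as braids in $B_4$ and normalised via the braid relations $s_i s_{i+1} s_i = s_{i+1} s_i s_{i+1}$ and $s_i s_j = s_j s_i$ for $|i-j|\ge 2$, the uniqueness of $m=0$ becomes manifest.
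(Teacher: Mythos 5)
Your overall strategy coincides with the paper's: one verifies the compatibility square \eqref{tikz:zc-cc} directly in the graphical calculus and observes that it singles out $m=0$ among the candidates $\varphi^{(m)}$. The $m=0$ case is a pure braid identity, exactly as in the paper's pictorial proof, and your observation that excluding $m\neq 0$ genuinely uses non-degeneracy of the monodromy (the paper only asserts that the picture ``makes it clear'') is a worthwhile sharpening --- in a symmetric category, for instance, all odd powers of the braiding coincide.

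However, the key displayed step of your uniqueness argument does not say what you need it to say. The equation $\cb^{2m+1}_{v,\Ol x}=\cb_{v,\Ol x}\circ(\cb_{\Ol x,v}\circ\cb_{v,\Ol x})^{m}$ is simply the definition of the odd power of the braiding and holds for every $m$; the condition ``the $m$-th power of the monodromy is the identity'' is instead $\cb^{2m+1}_{v,\Ol x}=\cb_{v,\Ol x}$. Moreover, your reduction tracks only the factor $\cb^{2m+1}_{v,\Ol x}$ coming from $\varphi^{(m)}_{\Ol u,v,\Ol x,y}$ on the upper path, but the lower path carries a second such factor, $\id_{\Ol x}\oti\cb^{2m+1}_{y,\Ol u}\oti\id_v$ from $\varphi^{(m)}_{\Ol x,y,\Ol u,v}$, which involves the pair $(y,\Ol u)$ rather than $(v,\Ol x)$; after cancelling the common $m=0$ braid, the square equates two a priori different monodromy powers rather than setting one equal to the identity. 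Both defects are repairable at once: specialize $\Ol u=y=\one$, so that $\cz_{\Ol u\otimes v,\Ol x\otimes y}$ reduces to $\cb_{v,\Ol x}$ while $\GC(\ccc_{\Ol u\otimes\Ol x,v\otimes y})$ and the lower $\varphi^{(m)}$ become identities; the square then literally reads $\cb^{2m+1}_{v,\Ol x}=\cb_{v,\Ol x}$ for all $v,x\in\C$, and non-degeneracy of the monodromy (modularity) forces $m=0$. With that correction your argument goes through.
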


\begin{proof}
That the diagram \eqref{tikz:zc-cc} commutes if $m \eq 0$ is seen diagrammatically as follows:
  \be
  \begin{tikzpicture}
  \braid[ line width=\piclinesize, height=1.4cm, style strands={1,2,3,4}{blue},
  ] (braid) at (-2,0) s_1-s_3^{-1} | s_2^{-1};
  \node[at=(braid-rev-1-e),below=\piclabetsep] {$\Ol u$}; 
  \node[at=(braid-rev-2-e),below=\piclabetsep] {$v$}; 
  \node[at=(braid-rev-3-e),below=\piclabetsep] {$\Ol x$}; 
  \node[at=(braid-rev-4-e),below=\piclabetsep] {$y$}; 
  \node (eq) at (-2.87,-2.4) {$\ssg \varphi_{\Ol u,v,\Ol x,y}^{} $};
  \node (eq) at (2.75,-1.8) {$ = $};
  \braid[ line width=\piclinesize, height=0.7cm, style strands={1,2,3,4}{blue},
  ] (braid) at (4.4,0) | s_2^{-1} s_2 s_1-s_3^{-1} s_2^{-1};
  \node[at=(braid-rev-1-e),below=\piclabetsep] {$\Ol u$}; 
  \node[at=(braid-rev-2-e),below=\piclabetsep] {$v$}; 
  \node[at=(braid-rev-3-e),below=\piclabetsep] {$\Ol x$}; 
  \node[at=(braid-rev-4-e),below=\piclabetsep] {$y$}; 
  \node (eq) at (8.27,-0.63) {$\ssg \varphi_{\Ol u,v,\Ol x,y}^{} $};
  \end{tikzpicture}
  \ee
This picture also makes it clear that the requirement cannot be fulfilled with any
other odd power of the braiding.
\end{proof}


\subsection{Adjoints of the forgetful functor}\label{more:U-I}

Given an (associative, unital) algebra in a monoidal category \C, we denote
by $\M \,{\equiv}\, A\Mod$ the category of left $A$-modules in \C. Further, write
  \be
  \begin{array}{rl}
  U \colon \quad \M &\!\!\! \xrightarrow{~~~} \C 
  \nxl1
  m &\!\!\! \longmapsto \dot m
  \eear
  \ee
for the forgetful functor and
  \be
  \begin{array}{rl}
  I \colon \quad \C &\!\!\! \xrightarrow{~~~} \M 
  \nxl1
  x &\!\!\! \longmapsto A \oti x
  \eear
  \ee
for the induction functor. In case a functor $F$ has a right or left adjoint, we denote
it by $F^\ra$ and $F^\la$, respectively.
 
As is well known, and easy to check, the mappings $ \vpsi\colon \Hom_\M(I(x),m) \To
\HomC(x,U(m))$ and $ \vphi\colon \HomC(x,U(m)) \To 
  \Hom_\M(I(x),m) $ defined by
  \be
  \vpsi(f) := f \circ (\eta \oti \id_x) \qquand
  \vphi(g) := \rho_m \circ (\id_A \oti g) \,.
  \label{eq:Ula=I}
  \ee
for $x\iN\C$ and $m \,{\equiv}\, (\dot m,\rho_m) \iN\M$, are each other's inverse, so that we have

\begin{lem}
$I$ is left adjoint to $U$, so that we can write $U^\la \eq I$.
\end{lem}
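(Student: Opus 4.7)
The plan is to verify directly that the two maps $\vpsi$ and $\vphi$ displayed in \eqref{eq:Ula=I} are mutually inverse and natural in both arguments, which is precisely the content of an adjunction $I \dashv U$. Since $U$ is the forgetful functor, no work is needed to see that $\vpsi(f)$ lies in $\HomC(x,U(m))$; the only nontrivial well-definedness check is that $\vphi(g) = \rho_m \circ (\id_A \oti g)$ is actually a morphism of left $A$-modules from $I(x) = A\oti x$ (with action given by $\mul_A \oti \id_x$) to $m$. This is a standard one-line computation from associativity of $\rho_m$: one rewrites $\rho_m \circ (\mul_A \oti g) = \rho_m \circ (\id_A \oti \rho_m) \circ (\id_A \oti \id_A \oti g) = \rho_m \circ (\id_A \oti \vphi(g))$.

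Next I would verify $\vpsi \circ \vphi = \id_{\HomC(x,U(m))}$ by computing, for any $g\colon x \To U(m)$,
\begin{equation}
\vpsi(\vphi(g)) = \rho_m \circ (\id_A \oti g) \circ (\eta \oti \id_x)
= \rho_m \circ (\eta \oti \id_{\dot m}) \circ g = g,
\nonumber
\end{equation}
where the last equality uses the unit axiom of the $A$-module $m$. For the other direction, $\vphi \circ \vpsi = \id_{\Hom_\M(I(x),m)}$, I would fix $f\colon I(x) \To m$ in $\M$ and compute
\begin{equation}
\vphi(\vpsi(f)) = \rho_m \circ (\id_A \oti f) \circ (\id_A \oti \eta \oti \id_x).
\nonumber
\end{equation}
Because $f$ is an $A$-module morphism, $\rho_m \circ (\id_A \oti f) = f \circ (\mul_A \oti \id_x)$, so the right hand side equals $f \circ (\mul_A \oti \id_x) \circ (\id_A \oti \eta \oti \id_x) = f$ by the right unit axiom of $A$.

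Finally, naturality of the bijection in $x \iN \C$ and $m \iN \M$ follows by inspection of the defining formulas, since precomposition with a morphism $x' \To x$ in \C\ and postcomposition with a morphism $m \To m'$ in \M\ commute with $\eta \oti \id$ and with $\rho_m$. Putting these pieces together yields a natural isomorphism $\Hom_\M(I(x),m) \cong \HomC(x,U(m))$, which is exactly the statement that $I$ is left adjoint to $U$, i.e.\ $U^\la \eq I$. There is no real obstacle here; the only place requiring minimal attention is the verification that $\vphi(g)$ respects the $A$-action, and this is immediate from associativity of $\rho_m$.
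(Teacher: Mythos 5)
Your proposal is correct and follows exactly the route the paper takes: the paper simply declares the maps $\vpsi(f) = f \circ (\eta \oti \id_x)$ and $\vphi(g) = \rho_m \circ (\id_A \oti g)$ of \erf{eq:Ula=I} to be mutually inverse ("as is well known, and easy to check"), and you have supplied the routine verifications (well-definedness of $\vphi(g)$ via associativity of $\rho_m$, the two composite identities via the unit axioms, and naturality) that the paper leaves implicit.
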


\begin{rem}
This statement is analogous to the classical result that for an embedding $R\,{\subset}\,S$ of
rings the induction functor $S \,{\otimes_R}, -$ is left adjoint to the forgetful functor
$U\colon S\Mod \To R\Mod$. In this setting, a right adjoint of $U$ is given by 
the coinduction functor $\Hom_R(S,-)$\,.
\end{rem}

\begin{lem}
If $A \eq (A,\mu,\eta,\Delta,\eps)$ is a Frobenius algebra, then
$I$ is right adjoint to $U$, so that we can write $U^\ra \eq I$.
\end{lem}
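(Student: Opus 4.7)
The plan is to exhibit a natural bijection
$$
\Phi_{m,x}\colon \HomC(U(m),x) \xrightarrow{\;\cong\;} \Hom_\M(m,I(x))
$$
directly, and verify it is an isomorphism by writing down an explicit inverse $\Psi_{m,x}$, both constructed from the Frobenius structure $(\mu,\eta,\Delta,\eps)$.

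Concretely, I would set
$$
\Phi_{m,x}(g) := (\id_A \oti g) \circ (\id_A \oti \rho_m) \circ \bigl( (\Delta\cir\eta) \oti \id_{\dot m} \bigr)
\qquad\text{and}\qquad
\Psi_{m,x}(f) := (\eps \oti \id_x) \circ f .
$$
The key preparatory observation is that the morphism $\tau_m := (\id_A \oti \rho_m) \cir \bigl( (\Delta\cir\eta) \oti \id_{\dot m} \bigr) \colon \dot m \To A \oti \dot m$ is an $A$-module map (where $A \oti \dot m$ carries the regular left action $\mu \oti \id_{\dot m}$). This is a short calculation which uses the Frobenius compatibility $\Delta \cir \mu = (\mu \oti \id_A) \cir (\id_A \oti \Delta)$ together with associativity of $\rho_m$. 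From this it follows that $\Phi_{m,x}(g)$ lands in $\Hom_\M(m,I(x))$ for every $g$, and naturality of $\Phi$ and $\Psi$ in both arguments is then immediate from the definitions.

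The two remaining verifications are that $\Psi \cir \Phi = \id$ and $\Phi \cir \Psi = \id$. The first of these is a one-line computation: applying $\eps \oti \id$ to $\tau_m$, the Frobenius counit axiom $(\eps \oti \id_A) \cir \Delta = \id_A$ reduces $\tau_m$ to $\rho_m \cir (\eta \oti \id_{\dot m})$, which equals $\id_{\dot m}$ by the module unit axiom. The more substantive verification is $\Phi \cir \Psi = \id$: here I would first use that $f\colon m \To A \oti x$ is a module map to exchange $(\id_A \oti f) \cir (\id_A \oti \rho_m)$ with $(\id_A \oti \mu \oti \id_x) \cir (\id_A \oti \id_A \oti f)$, then collapse the resulting expression using the "zigzag" identity
$$
(\id_A \oti (\eps \cir \mu)) \circ \bigl( (\Delta \cir \eta) \oti \id_A \bigr) = \id_A
$$
that is characteristic of a Frobenius algebra; this identity itself follows from the Frobenius compatibility relation together with the (co)unit axioms and is nothing but nondegeneracy of the Frobenius pairing $\kappa \eq \eps\cir\mu$ with inverse $\Delta\cir\eta$.

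The main obstacle is this last step, since the zigzag identity is precisely where the hypothesis that $A$ is Frobenius (and not merely an algebra) enters essentially. Once it is in hand, both triangle identities fall out and naturality is automatic, so the adjunction $U \dashv I$ is established, i.e.\ $U^\ra \eq I$.
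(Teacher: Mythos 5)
Your proposal is correct and follows essentially the same route as the paper: your $\Phi_{m,x}$ coincides with the paper's $\tphi$ (since $(\id_A\oti g)\cir(\id_A\oti\rho_m)=\id_A\oti(g\cir\rho_m)$), your $\Psi_{m,x}$ is the paper's $\tpsi$, and the verifications — module-map property of the coaction-like morphism via the Frobenius relation, counit/unit axioms for one triangle, and the zigzag identity for the other — match the paper's argument step for step.
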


\begin{proof}
Consider the mappings $ \tpsi\colon \Hom\M(m,I(x)) \To \HomC(U(m),x) $ and
  $ \tphi\colon \HomC(U(m),x)
  $\linebreak[0]$
  \To \Hom_\M(m,I(x)) $  
that, for $x\iN\C$ and $m \,{\equiv}\, (\dot m,\rho_m) \iN\M$, are defined by
  \be
  \tpsi(f) := (\eps \oti \id_x) \circ f \qquand
  \tphi(g) := \big( \id_A \oti (g \cir\rho_m)\big) \circ
  \big( (\Delta\cir\eta) \oti \id_m \big) \,.
  \label{eq:Ura=I}
  \ee
(That $\tphi(g)$ is a morphism in \M\ follows by a twofold use of the Frobenius relation for
the product and coproduct together with unitality $A$.) These mappings are each other's inverse:
we have $\tpsi \cir \tphi \eq \id_{\HomC(U(m),x)}$ by first using the defining property
of the counit $\eps$ and then the compatibility of $\rho_m$ with the unit,
as well as $\tphi \cir \tpsi \eq \id_{\Hom_\M(m,I(x))}$ by first using the module morphism
property, then the Frobenius relation and then the defining properties of unit and counit.
\end{proof}

Thus for a Frobenius algebra the functors $I$ and $U$ are two-sided adjoints to each other;
in other words, induction and coinduction coincide. They thus form what is called
a strongly adjoint pair of functors \cite{mori2}, or a pair of \emph{Frobenius functors}, or
Frobenius pair \cite{cagN}.
Such functors are particularly well-behaved, e.g.\ they are exact, preserve limits 
and colimits, and preserve injective and projective objects (see e.g.\ \cite{caDm}).


\subsection{The central monad}\label{app:centralmonad}

Consider the mapping
  \be
  c \,\longmapsto\, Z(c) := \int^{x\in\C}\!\! x^\vee \oti c \oti x \,,
  \label{eq:c-Z(c)}
  \ee
sending an object $c$ of \C\ to a coend as in formula \eqref{eq:Z(c)}. This 
furnishes an endofunctor of \C; moreover, this endofunctor carries a natural algebra 
structure and is thus a \emph{monad} on \C. 
  
\begin{Definition}[Monad]
A \emph{monad} $\T \eq (T,\mm,\eeta)$ on a category \C\ is an algebra in the monoidal category 
of endofunctors of \C, i.e.\ an endofunctor $T\colon \C\To\C$ together with a natural 
transformation $\mm \eq (\mm_c)_{c\in\C}\colon T\cir T \TO T$ and a natural transformation
$\eeta \eq (\eeta_c)_{c\in\C}\colon \id_\C \TO T$
that obey the associativity and unit properties
  \be
  \mm_c \circ T(\mm_c) = \mm_c \circ \mm_{T(c)} \qquand
  \mm_c \circ T(\eeta_c) = \id_{T(c)} = \mm_c \circ \eeta_{T(c)} 
  \label{eq:T-ass,id}
  \ee
for all $c \iN \C$.
\end{Definition}

Note that naturality of $m$ and $\eta$ mean that
  \be
  \mm_d \circ T^2(f) = T(f) \circ \mm_c \qquand
  \eeta_d \circ f = T(f) \circ \eeta_c
  \label{eq:m-eta-natural}
  \ee
for any morphism $f\colon c\To d$ in \C.
A virtue of the monad concept is that it does not require \C\ itself to be monoidal. In  case 
\C\  \emph{is} monoidal, then for any algebra $A\iN \C$ the endofunctors $- \oti A$ and 
$A \oti -$ admit natural structures of monads on \C.

Modules over monads are defined analogously as modules over algebras: A (left) module over
a monad $\T \eq (T,\mm,\eeta)$ on \C\ (also called a $\T$-algebra) consists of an object
$m$ of \C\ and a morphism $\rho_m\colon T(m) \To m$ such that $\rho_m \cir \mm_m \eq
\rho_m \cir T(\rho_M)\colon T^2(m) \To m$ and $\rho_m \cir \eeta_m \eq \id_m$.
A morphism $f\colon m\To n$ of \T-modules satisfies by definition
  \be
  f \circ \rho_m = \rho_n \circ T(f)
  \label{eq:Z-module-morph}
  \ee
with $\rho_m\colon T(m) \To m$ and $\rho_n\colon T(n) \To n$ the respective representation
morphisms.  

\medskip

For any finite tensor category \C\ the prescription \erf{eq:c-Z(c)} provides
a distinguished monad on \C, the \emph{central monad} $\ZZ$.
The product and unit of $\ZZ$ are directly defined in terms of the dinatural families 
$\iZ c$ of the coends $Z(c)$:
  \be
  \eeta_c := \iZ c_\one \qquand
  \mm_c \circ \iZZ c_y \circ (\id_{y^\vee_{}} \oti \iZ c_x \oti \id_y)
  := \iZ c_{x\otimes y} 
  \ee
(in the formula for $\mm$, use of the Fubini theorem for iterated coends is implicit).
It can be shown \Cite{Thm.\,8.13}{brVi5} 
that the Drinfeld center \ZC\ is equivalent to the
category $\ZZ\Mod$ of $\ZZ$-modules in \C\ as a braided monoidal category.
In particular, a morphism $f\colon m\To n$ of $\ZZ$-modules is a morphism in \ZC.

The central monad $Z$ is even a \emph{bimonad}; it has a comonoidal structure given by
morphisms $\eps^\ZZ\colon \Ze \To \one$ and 
$\Delta^{\!\ZZ}_{c\otimes c'}\colon Z(c\oti c') \To Z(c) \oti Z(c')$ for all $c,c' \iN \C$
that are defined by
  $  
  \eps^\ZZ \cir \iZ\one_x \,{:=}\, \ev_x
  $  
and 
  \be
  \Delta^{\!\ZZ}_{c\otimes c'} \circ \iZ{c\otimes c'}_x := ( \iZ c_x \oti \iZ{c'}_x )
  \circ (\id_{x^\vee_{}} \oti \id_c \oti \coev_x \oti \id_{c'} \oti \id_x) \,,
  \ee
respectively, for all $x\iN \C$.

\medskip

Also note that $\Ze \eq L$. Accordingly, structural insight about statements involving the 
object $L \iN \C$ can favorably be obtained by formulating them in terms of the more canonical
monad $\ZZ$. Here are a few examples: First, the algebra structure on $L$ is a specialization 
of the algebra structure on $\ZZ$, namely $\mulL \eq \mm_\one$ and $\etaL \eq \eeta_\one$.
Second, the convolution product \erf{tildalpha.tildbeta} on $\HomC(\one,L)$, which in the monad 
setting reads $\tild\alpha \cvp \tild\beta \eq \mm_\one \cir Z(\tild\beta) \cir \tild\alpha$,
is a special case of a product that exists on the morphism space $\HomC(\one,T(\one))$ for any
monad $\T$ on a monoidal category.
Third, the adjunction isomorphisms \erf{eq:1,L<->F,F-2} can be expressed as
  \be
  \vphi: \quad \HomC(\one,L) \ni\, \tild\alpha \longmapsto \mm_\one \circ Z(\tild\alpha)
  \,\in \HomZ(F,F)
  \label{eq:1,L->F,F}
  \ee
and
  \be
  \vpsi: \quad \HomZ(F,F) \ni\, g \longmapsto g \circ \eeta_\one \,\in \HomC(\one,L) \,,
  \label{eq:F,F->1,L}
  \ee
respectively, showing in particular that $\vphi(\tild\alpha)$ is indeed an element of
$\HomZ(F,F)$ rather than only of $\HomC(F,F) \,{\supset}\, \HomZ(F,F)$.
And fourth, that $L$ has a natural Hopf algebra structure if \C\ is braided can be seen,
without spelling out the structural morphisms explicitly, as a consequence of

\begin{lem}
For \C\ a braided finite tensor category, the endofunctors $\ZZ$ and $L\oti-$ are 
isomorphic as bimonads. A pair of mutually inverse isomorphisms
$\zeta\colon L\oti{-} \,{\Rightarrow}\,\ZZ$ and $\xi\colon \ZZ \,{\Rightarrow}\, L\oti-$
is given by
  \be
  \zeta_c \circ (\iL_x \oti \id_c) := \iZ c_x \circ (\id_{x^\vee_{}} \oti \cb_{x,c})
  \quad {\rm and} \quad
  \xi_c \circ \iZ c_x := (\iL_x \oti \id_c) \circ (\id_{x^\vee_{}} \oti \cb^{-1}_{x,c}) \,.
  \label{eq:xi_c}
  \ee
\end{lem}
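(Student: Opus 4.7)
My plan is to use the universal property of coends throughout, which reduces every verification to an equality of dinatural families, and then to check the four requirements in turn: well-definedness (and naturality in $c$), mutual invertibility, compatibility with the monad structure, and compatibility with the comonad structure.

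First I would establish well-definedness. Since the tensor product is exact in each variable, $L\oti c \eq \int^x x^\vee \oti x \oti c$ with dinatural family $\iL_x\oti\id_c$, and similarly for $\ZZ(c)$. Thus to define $\zeta_c$ it suffices to exhibit a dinatural family $x \,{\mapsto}\, \iZ c_x \cir (\id_{x^\vee} \oti \cb_{x,c})\colon x^\vee \oti x \oti c \To \ZZ(c)$. Dinaturality in $x$ follows by combining the dinaturality of $\iZ c$ (applied to the source factors $x^\vee$ and $x$) with the naturality of the braiding $\cb_{-,c}$; the construction of $\xi_c$ is symmetric, using $\cb^{-1}$ in place of $\cb$. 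Naturality of $\zeta$ and $\xi$ in $c$ (i.e.\ for a morphism $f\colon c \To c'$) follows from the naturality of the braiding together with the functoriality induced on coends by $f$.

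Next I would check mutual invertibility. Precomposing $\xi_c \cir \zeta_c$ with the dinatural morphism $\iL_x \oti \id_c$ and inserting the two definitions gives $(\iL_x\oti\id_c) \cir (\id_{x^\vee} \oti \cb^{-1}_{x,c}) \cir (\id_{x^\vee} \oti \cb_{x,c}) \eq \iL_x\oti\id_c$, so that $\xi_c \cir \zeta_c \eq \id_{L\otimes c}$ by universality; the reversed composition is handled in the same way by precomposing with $\iZ c_x$.

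For the monad structure, unit compatibility is immediate: $\zeta_c \cir (\etaL\oti\id_c) \eq \zeta_c \cir (\iL_\one\oti\id_c) \eq \iZ c_\one \cir (\id_\one \oti \cb_{\one,c}) \eq \iZ c_\one \eq \eeta^\ZZ_c$, using $\cb_{\one,c} \eq \id_c$. For the product, I would precompose both $\zeta_c \cir (\mulL\oti\id_c)$ and $\mm_c \cir \ZZ(\zeta_c) \cir \zeta_{\ZZ(c)}$ with $\iL_x\oti\iL_y\oti\id_c$ and expand using the explicit formula \erf{eq:L-morphisms} for $\mulL$ on the left and the definition $\mm_c \cir \iZZ c_y \cir (\id_{y^\vee}\oti\iZ c_x\oti\id_y) \eq \iZ c_{x\otimes y}$ on the right. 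Both sides reduce to $\iZ c_{y\otimes x} \cir (\id_{(y\otimes x)^\vee} \oti \cb_{y\otimes x, c})$ after rewriting $\cb_{y\otimes x,c}$ as $(\id_y\oti\cb_{x,c})\cir(\cb_{y,c}\oti\id_x)$. The comonad compatibility follows by an entirely analogous diagrammatic computation using the formulas for $\DeltaL$, $\epsL$ and the formulas $\eps^\ZZ \cir \iZ\one_x \eq \ev_x$ and $\Delta^{\!\ZZ}_{c\otimes c'} \cir \iZ{c\otimes c'}_x \eq (\iZ c_x \oti \iZ{c'}_x) \cir (\id_{x^\vee}\oti\id_c\oti\coev_x\oti\id_{c'}\oti\id_x)$.

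The main obstacle will be the product (and dually the coproduct) compatibility: both sides are dinatural in two indices $x,y$ and must be compared after a careful reshuffling of strands using naturality of $\cb$ and the hexagon axiom to identify $\cb_{y\otimes x, c}$ with its factorization in terms of $\cb_{x,c}$ and $\cb_{y,c}$. This is just braid bookkeeping rather than a conceptual difficulty, but it is the step where a diagrammatic verification is indispensable.
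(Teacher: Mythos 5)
Your proposal is correct and follows essentially the same route as the paper: mutual invertibility by precomposing with the dinatural families so that $\cb^{-1}_{x,c}\cir\cb_{x,c}$ cancels, and then a direct diagrammatic verification that units/counits and products/coproducts are intertwined, reducing everything via the universal property of the coend and the naturality and hexagon properties of the braiding (the paper phrases the latter as the braiding-induced isomorphism between the functors $x\oti{-}\oti y^\vee$ and $x\oti y^\vee\oti{-}$). The only caution is the index bookkeeping in the product step, where the paper's conventions give $\mulL\cir(\iL_x\oti\iL_y)\eq\iL_{y\otimes x}\cir(\cdots)$ while the monad multiplication targets $\iZ c_{x\otimes y}$, so the order of the composite indices on the two sides must be tracked carefully when matching the dinatural families.
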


\begin{proof}
That $\xi_c \cir \zeta_c \eq \id_{L\otimes c}$ and $\zeta_c \cir \xi_c \eq \id_{Z(c)}$
is seen by direct computation. That these natural transformations furnish an isomorphism of
bimonads follows from the fact that, via the braiding, the functors $x \oti - \oti y^\vee$ and
$x \oti y^\vee \oti -$ are isomorphic. Indeed, that the units and counits are intertwined
is immediate, and that the products and coproducts are intertwined is verified by the
calculations
  \be
  \bearll
  \multicolumn{2}{l} {
  \xi_c \circ \mm_c \circ \zeta_{Z(c)} \circ (\iZe_y \oti \zeta_c) \circ
  (\id_{y^\vee_{}} \oti \id_y \oti \iZe_x \oti \id_c) 
  }
  \Nxl2 \hspace*{1.2em} &
  = \xi_c \circ \mm_c \circ \iZZ c_y \circ (\id_{y^\vee_{}} \oti \cb_{y,Z(c)}) \circ
  \big( \id_{y^\vee_{}} \oti \id_y \oti [\iZ c_x \cir (\id_{x^\vee_{}} \oti \cb_{x,c})] \big)
  \Nxl2 &
  = \xi_c \circ \mm_c \circ \iZZ c_y \circ (\id_{y^\vee_{}} \oti \iZ c_x \oti \id_y)
  \circ (\id_{y^\vee_{}} \oti \id_{x^\vee_{}} \oti \cb_{x,c} \oti \id_y) 
  \Nxl2 & \hspace*{1.2em}
  \circ\, (\id_{y^\vee_{}} \oti \cb_{y,x^\vee\otimes x \otimes c})
  \Nxl2 &
  = \xi_c \circ \iZ c_{x\otimes y} \circ (\id_{y^\vee_{}} \oti \id_{x^\vee_{}}
  \oti \cb_{x\otimes y,c}) \circ (\id_{y^\vee_{}} \oti \cb_{y,x^\vee\otimes x} \oti \id_c)
  \Nxl2 &
  = (\iZe_{x\otimes y} \oti \id_c) \circ  (\id_{y^\vee_{}} \oti \id_{x^\vee_{}}
  \oti \cb_{x\otimes y,c}^{-1}) \circ (\id_{y^\vee_{}} \oti \id_{x^\vee_{}} \oti \cb_{x\otimes y,c})
  \Nxl2 & \hspace*{1.2em}
  \circ\, (\id_{y^\vee_{}} \oti \cb_{y,x^\vee\otimes x} \oti \id_c)
  \Nxl2 &
  = \big( \iZe_{x\otimes y} \cir (\id_{y^\vee_{}} \oti \cb_{y,x^\vee\otimes x}) \big) \oti \id_c
  \,=\, \big( \mul_L \cir (\iZe_y \oti \iZe_x) \big) \oti \id_c
  \eear
  \ee
and 
  \be
  \bearll
  \multicolumn{2}{l} {
  (\xi_c \oti \xi_{c'}) \circ \Delta^{\!Z}_{c\otimes c'} \circ \zeta_{c\otimes c'}
  \circ (\iZe_x \oti \id_c \oti \id_{c'})
  }
  \Nxl2 \hspace*{2.2em} &
  = (\iZe_x \oti \id_c \oti \iZe_x \oti \id_{c'}) 
  \circ\, (\id_{x^\vee_{}} \oti \cb^{-1}_{x,c} \oti id_{x^\vee_{}} \oti \cb^{-1}_{x,c})
  \Nxl2 & \hspace*{1.2em}
  \circ\, (\id_{x^\vee_{}} \oti \id_c \oti \coev_x \oti \id_{c'} \oti \id_x)
  \circ (\id_{x^\vee_{}} \oti \cb_{x,c\otimes c'})
  \Nxl2 &
  = (\iZe_x \oti \id_c \oti \iZe_x \oti \id_{c'})
  \circ (\id_{x^\vee_{}} \oti \id_x \oti \cb_{x^\vee_{},c} \oti \id_x \oti \id_{c'})
  \Nxl2 & \hspace*{1.2em}
  \circ\, (\id_{x^\vee_{}} \oti \cb_{x,c} \oti \id_{c'})
  \Nxl2 &
  = \big( (\id_L \oti \cb_{L,c}) \cir (\Delta_L \oti \id_c) \cir (\iZe_x \oti \id_c) \big)
  \oti \id_{c'}
  \eear
  \label{eq:coprod-intertw}
  \ee
respectively (the right hand side of \erf{eq:coprod-intertw}
defines the comonoidal structure of $L\oti-$).
\end{proof}

Actually we could choose to replace the braiding and inverse braiding
in \eqref{eq:xi_c} by any odd power of them. This would not have any effect on the
resulting structural morphisms for the Hopf algebra $L$, but only change the braiding
in the definition of the comonoidal structure of the bimonad $L\oti-$.

\newpage

\newcommand\wb{\,\linebreak[0]} \def\wB {$\,$\wb}
\newcommand\Bi[2]    {\bibitem[#2]{#1}}
\newcommand\inBo[8]  {{\em #8}, in:\ {\em #1}, {#2}\ ({#3}, {#4} {#5}), p.\ {#6--#7} }
\newcommand\inBO[9]  {{\em #9}, in:\ {\em #1}, {#2}\ ({#3}, {#4} {#5}), p.\ {#6--#7} {\tt [#8]}}
\newcommand\J[7]     {{\em #7}, {#1} {#2} ({#3}) {#4--#5} {{\tt [#6]}}}
\newcommand\JO[6]    {{\em #6}, {#1} {#2} ({#3}) {#4--#5} }
\newcommand\JP[7]    {{\em #7}, {#1} ({#3}) {{\tt [#6]}}}
\newcommand\BOOK[4]  {{\em #1\/} ({#2}, {#3} {#4})}
\newcommand\PhD[2]   {{\em #2}, Ph.D.\ thesis #1}
\newcommand\Prep[2]  {{\em #2}, preprint {\tt #1}}
\def\adma  {Adv.\wb Math.}
\def\ajse  {Arabian Journal for Science and Engineering}
\def\amjm  {Amer.\wb J.\wb Math.}
\def\anma  {Ann.\wb Math.}
\def\apcs  {Applied\wB Cate\-go\-rical\wB Struc\-tures}
\def\aspm  {Adv.\wb Stu\-dies\wB in\wB Pure\wB Math.}
\def\atmp  {Adv.\wb Theor.\wb Math.\wb Phys.}   
\def\cocm  {Com\-mun.\wb Con\-temp.\wb Math.}
\def\coia  {Com\-mun.\wB in\wB Algebra}
\def\coma  {Con\-temp.\wb Math.}
\def\comp  {Com\-mun.\wb Math.\wb Phys.}
\def\cpma  {Com\-pos.\wb Math.}
\def\duke  {Duke\wB Math.\wb J.}
\def\ijmp  {Int.\wb J.\wb Mod.\wb Phys.\ A}
\def\imrn  {Int.\wb Math.\wb Res.\wb Notices}
\def\inma  {Invent.\wb math.}
\def\jajm  {Japan.\wb J.\wb Math.}
\def\jams  {J.\wb Amer.\wb Math.\wb Soc.}
\def\jgap  {J.\wb Geom.\wB and\wB Phys.}
\def\jhep  {J.\wb High\wB Energy\wB Phys.}
\def\joal  {J.\wB Al\-ge\-bra}
\def\jopa  {J.\wb Phys.\ A}
\def\jktr  {J.\wB Knot\wB Theory\wB and\wB its\wB Ramif.}
\def\jpaa  {J.\wB Pure\wB Appl.\wb Alg.}
\def\jram  {J.\wB rei\-ne\wB an\-gew.\wb Math.}
\def\leni  {Lenin\-grad\wB Math.\wb J.}
\def\maan  {Math.\wb Annal.}
\def\mams  {Memoirs\wB Amer.\wb Math.\wb Soc.}
\def\maze  {Math.\wb Zeit\-schr.}
\def\momj  {Mos\-cow\wB Math.\wb J.}
\def\nupb  {Nucl.\wb Phys.\ B}
\def\pajm  {Pa\-cific\wB J.\wb Math.}
\def\pams  {Proc.\wb Amer.\wb Math.\wb Soc.}
\def\phlb  {Phys.\wb Lett.\ B}
\def\nyjm  {New\wB York\wB J.\wb Math}
\def\plms  {Proc.\wB Lon\-don\wB Math.\wb Soc.}
\def\prja  {Proc.\wB Japan\wB Acad.}
\def\pspm  {Proc.\wb Symp.\wB Pure\wB Math.}
\def\quto  {Quantum\wB Topology}
\def\ruma  {Revista de la Uni\'on Matem\'atica Argentina}
\def\sema  {Selecta\wB Mathematica}
\def\slnm  {Sprin\-ger\wB Lecture\wB Notes\wB in\wB Mathematics}
\def\stkt  {Sci.\wb Rep,\wB Tokyo\wB Kyoiku\wB Daigaku\wB A}
\def\taac  {Theo\-ry\wB and\wB Appl.\wb Cat.}
\def\tams  {Trans.\wb Amer.\wb Math.\wb Soc.}
\def\thmp  {Theor.\wb Math.\wb Phys.}
\def\trgr  {Trans\-form.\wB Groups}

\small

\end{document}